\newtheorem{thm}{Theorem}[section]
\newtheorem{cor}[thm]{Corollary}
\newtheorem{lem}[thm]{Lemma}
\newtheorem{prop}[thm]{Proposition}
\theoremstyle{definition}
\newtheorem{defn}[thm]{Definition}
\theoremstyle{remark}
\newtheorem{rem}[thm]{Remark}
\numberwithin{equation}{section}
\newtheorem{examp}[thm]{Example}
\renewcommand{\epsilon}{\varepsilon}
\newcommand{\cL}{\mathcal L}
\newcommand{\cC}{\mathcal C}
\newcommand{\cF}{\mathcal F}
\newcommand{\cJ}{\mathcal J}
\newcommand{\cP}{\mathcal P}
\newcommand{\cA}{\mathcal A}
\newcommand{\bbR}{\mathbb R}
\newcommand{\bbT}{\mathbb T}
\newcommand{\bbC}{\mathbb C}
\newcommand{\bbN}{\mathbb N}
\newcommand{\bbZ}{\mathbb Z}
\newcommand{\bbL}{\mathbb L}
\newcommand{\mfg}{\mathfrak g}
\newcommand{\BS}{Bohr-Sommerfeld }
\newcommand{\bd}{\partial}
\newcommand{\frbd}[1]{\frac{\bd}{\bd #1}}
\newcommand{\frbdt}[2]{\frac{\bd #1}{\bd #2}}
\DeclareMathOperator{\rank}{rank}
\pgfplotsset{compat=1.14}
\tikzset{
  pics/torus/.style n args={3}{
    code = {
      \providecolor{pgffillcolor}{rgb}{1,1,1}
      \begin{scope}[
          yscale=cos(#3),
          outer torus/.style = {draw,line width/.expanded={\the\dimexpr2\pgflinewidth+#2*2},line join=round},
          inner torus/.style = {draw=pgffillcolor,line width={#2*2}}
        ]
        \draw[outer torus] circle(#1);\draw[inner torus] circle(#1);
        \draw[outer torus] (180:#1) arc (180:360:#1);\draw[inner torus,line cap=round] (180:#1) arc (180:360:#1);
      \end{scope}
    }
  }
}
\begin{document}

\title{Geometric quantization via cotangent models}
\author{Pau Mir}
\address{{Laboratory of Geometry and Dynamical Systems, Department of Mathematics}, Universitat Polit\`{e}cnica de Catalunya and BGSMath, Barcelona, Spain }
\email{pau.mir.garcia@upc.edu}
\author{Eva Miranda}
\thanks{{Pau Mir is supported by an FPI-UPC grant. Eva Miranda is supported by the Catalan Institution for Research and Advanced Studies via an ICREA Academia Prize 2016. Both authors are supported by the grants reference number PID2019-103849GB-I00 (AEI) and reference number 2017SGR932 (AGAUR).}}

\address{{Laboratory of Geometry and Dynamical Systems Department of Mathematics $\&$ Ins\-titut de Matemàtiques de la UPC-BarcelonaTech (IMTech )}, Universitat Polit\`{e}cnica de Catalunya $\&$ Centre de Recerca Matemàtica, Barcelona, Spain \\ and
\\ IMCCE, CNRS-UMR8028, Observatoire de Paris, PSL University, Sorbonne
Universit\'{e}, 77 Avenue Den\-fert-Rochereau,
75014 Paris, France}
\email{eva.miranda@upc.edu}

\begin{abstract}

In this article we give a universal model for geometric quantization associated to a real polarization given by an integrable system with non-degenerate singularities. This universal model goes one step further than the cotangent models in \cite{KiesenhoferMiranda} by both considering singular orbits and adding to the cotangent models a model for the prequantum line bundle. These singularities are generic in the sense that are given by Morse-type functions and include elliptic, hyperbolic and focus-focus singularities. Examples of systems admitting such singularities are toric, semitoric and almost toric manifolds, as well as physical systems such as the coupling of harmonic oscillators, the spherical pendulum or the reduction of the Euler’s equations of the rigid body on $T^*(SO(3))$ to a sphere. Our geometric quantization formulation coincides with the models given in \cite{mirandahamilton} and \cite{mirandapresassolha} away from the singularities and corrects former models for hyperbolic and focus-focus singularities cancelling out the infinite dimensional contributions obtained by former approaches. The geometric quantization models provided here match the classical physical methods for mechanical systems such as the spherical pendulum as presented in \cite{cushmansniatycki}. Our cotangent models obey a \emph{local-to-global principle} and can be glued to determine the geometric quantization of the global systems even if the global symplectic classification of the systems is not known in general.
\end{abstract}

\maketitle

%\dedicatory{To Isadore Singer (1924-2021), in memoriam}

\section{Introduction}

Quantization is a mathematical procedure which seeks to associate a quantum system to a classical Hamiltonian system by replacing functions by operators and Poisson brackets of functions by brackets of operators. Several paths have been traced for this passionate journey from geometry and analysis into Physics: geometric quantization, formal quantization, BRST quantization and semi-classical quantization, to cite a few. All of them supply Taylor-made master formulas to the day-dreamer mathematicians who are looking into the quantum world.
 
In this article we focus on the geometric quantization approach and we provide a new model which corrects former models and brings us closer to the role of quantization as a mathematical tamer of quantum physics. Some almost metaphysical questions still waft through the air: Can this be achieved? Do these methods depend on additional data? Can we find a universal model?

One of the virtues of our model is that it takes the cotangent bundle as a general set-up for our systems. The connection between a Hamiltonian system and the cotangent bundle is given by the cotangent lift and provides a unified approach to former attempts in the literature. On the other hand, one of the downfalls of our model is that, unlike other quantization models like Kähler quantization, it depends on choices (in our case, on the choice of a real polarization given by an integrable system) as it usually happens in the standard geometric quantization. One of the advantages of our method is that geometric quantization of integrable systems can be computed even if global classification of integrable systems with non-degenerate singularities is unknown in general (not even semi-local classification), as its recipe is based on gluing local models. So the \emph{"from local to global"} principle prevails here.

Geometric quantization and integrable systems are common mathematical objects on the interface of Geometry and Physics. Integrable systems represent a class of Hamiltonian systems which can be associated to an extra set of functions called first integrals, and are ubiquitous in Physics. Many known systems, such as any two dimensional system, or more complicated systems, such as the coupled harmonic oscillators or the spherical pendulum, are integrable. Other classical systems defined by attracting or repelling particles, such as Toda systems, are integrable. The geometric quantization procedure meets integrable systems when these are used as data attached to the geometric quantization process, in particular as providers of (real) polarizations.

In this article, we contemplate the quantization problem considering precisely the real polarization associated to an integrable system. The geometric quantization procedure starts with a prequantum complex line bundle $\bbL$, which is naturally associated to a symplectic manifold $(M^{2n},\omega)$ of integral class, and an attached connection $\nabla$ with curvature $\omega$. A \emph{flat section} $s$ of the line bundle is a solution to the equation $\nabla_X s=0$, where the derivation takes place along the direction $X$ of a polarization, which in this article is considered to be real and given by the integrable system. Flat sections form a sheaf, from which one constructs a cohomology that eventually gives the quantization.

Because of the maximum principle, an integrable system defined via smooth functions on a compact phase space must have singularities. Then, polarizations given by these systems are singular too. In this article, we analyze the contribution of singularities to the geometric quantization of singular integrable systems.

The simplest type of singularities of smooth functions are Morse-type singularities, which admit a Morse or a Morse-Bott normal form. For integrable systems on symplectic manifolds one might also demand a normal form in a neighbourhood of their singularities. That is to say, one might assume that there exist local coordinates such that the functions defining the system are simultaneously of Morse type and such that the symplectic form is Darboux. Those singularities were initially considered by Eliasson \cite{Eliasson90} and later by Miranda \cite{MirandaThesis} and Miranda-Zung \cite{MirandaZung}.

In former works by Hamilton \cite{Hamilton}, Hamilton-Miranda \cite{mirandahamilton} and Miranda-Presas-Solha \cite{mirandapresassolha}, the authors analyze the contributions of non-degenerate singularities of integrable systems to quantization. They find no contribution from elliptic points and infinite dimensional contributions for hyperbolic and focus-focus type singularities. Those infinite dimensional models clash with the initial expectations of obtaining a finite dimensional representation space as the quantization space (and thus, representation space) of a system defined on a compact manifold.

In this article, we work out "cotangent models" for integrable systems with non-degenerate singularities which can be of elliptic, hyperbolic and focus-focus type. This is a first step towards understanding the cotangent models of the pairs given by the polarization associated to such integrable systems and the prequantum line bundle. Those singularities naturally appear in polarizations on compact manifolds given by integrable systems of Morse-Bott type. In particular, any semitoric system (such as the ones studied in \cite{pelayo1,pelayo2}) gives rise to singularities of this type). These structures also show up naturally in algebraic geometry, for instance in the study of the K3 surface\footnote{A K3 surface is an example of a hyperkähler manifold with three compatible complex structures $i,j,k$. The denomination K3 comes from Kummer, Kähler and Kodaira and, according to André Weil, it is a reminiscence of the beautiful mountain K2 in Kashmir.}, which can be viewed as a semitoric system. When it comes to considering their quantization, several models have been proposed. However, none of them can compete with the model of Kähler quantization in terms of independence of the polarization and in terms of the principle \emph{quantization commutes with reduction, or simply $[Q,R]=0$} (see \cite{guilleminsternbergflag}, \cite{guilleminsternbergQR}, \cite{guilleminsternbergQR2}). Notwithstanding, Kähler quantization cannot always be applied since the conditions to have a polarization of Kähler type are not always fulfilled.

For regular integrable systems (without singularities) action-angle coordinates (the classical Arnold-Liouville-Mineur theorem) provide cotangent models as a neighbourhood of the Liouville torus can be symplectically interpreted as its cotangent bundle, $T^*(\mathbb T^n)$. This canonical identification gives a way to relate the choice of the Liouville $1$-form of the cotangent bundle with the connection $1$-form of the prequantum line bundle. In other words, the Liouville $1$-form of the cotangent bundle yields a canonical choice of the connection $1$-form. This connects the cotangent model to quantization in the regular case (see for instance \cite{MirandaPresas15} and \cite{sniatpaper}). With the ambition of extending these ideas to the singular set-up, we analyze the cotangent lift technique for different types of non-degenerate singularities (in the sense of Eliasson-Williamson) and provide brand-new cotangent models for the pair given by the polarization and the connection one-form.

Additionally, the existence of a local model of cotangent type allows to capture symmetry and is compatible with the $[Q,R]=0$ principle. The cotangent models used to define the new proposal for geometric quantization for non-degenerate singularities also allows to obtain a unique universal cotangent model. In contrast to the former models of geometric quantization for real polarizations endowed with non-degenerate singularities in \cite{mirandahamilton} and \cite{mirandapresassolha}, our new models provide finite dimensional representations for systems on compact manifolds which match the physical models.

One interesting advantage of our models is that they fit well with the sheaf-theoretical geometric quantization kit provided in \cite{MirandaPresas15}. In particular, the Künneth formula and Mayer-Vietoris recipe which were established there can be used to patch the cotangent models to provide global quantization on a compact manifold even if the global symplectic classification of non-degenerate integrable systems is still unknown in some cases. In other words, our cotangent models can be seen as building pieces of the geometric quantization puzzle as partitions of unity in Differential geometry allow us to invoke a local-to-global principle.

\textbf{Organization of this article:} In Section \ref{sec:background} we revise the rudiments of integrable systems, the main features of the theory of non-degenerate singularities of such systems, the cotangent lift technique and the basics of geometric quantization. In Section \ref{sec:cotangentmodels} as a novel result we present the local normal form theorem for integrable systems with non-degenerate singularities described as a product of lower dimensional (2 or 4) cotangent models. we introduce the notion of discrete cotangent lift and show that it generates the set of \BS leaves. In Section \ref{sec:cotangentmodelsforquantization} we connect cotangent models of Section \ref{sec:cotangentmodels} with the connection $1$-form of geometric quantization. We prove that an integrable system with only non-degenerate singularities of hyperbolic and elliptic type can be realized as a cotangent lift. In Section \ref{sec:newmodel} we propose a new local model, redefining the previous quantizations for the hyperbolic and focus-focus singularities. We unify the quantization procedure for non-degenerate singularities. In Section \ref{sec:conclusions} we discuss some applications to the quantization of K3 surfaces and the advantages of the new model. Finally, in Appendix \ref{sec:cohomcyl}, for the sake of completeness we compute explicitly the sheaf cohomology of the cylinder. We obtain the expression of the flat sections, the cochains and the cocycles by hand, a procedure that may help a non-familiar reader to understand the techniques applied in the proofs of Section \ref{sec:newmodel}.

\section{Background. Integrable systems, the cotangent lift, quantization}
\label{sec:background}

We provide in this section the basic concepts that will be necessary for the rest of the article, dividing them into three parts. First, we review some important results on integrable systems defined on symplectic manifolds and on classification of singularities. Then, we introduce the cotangent lift, a tool which extends group actions to the cotangent bundle. Finally, we give a complete scheme of geometric quantization.

\subsection{Moment maps, Hamiltonian systems and singularities}

Hamiltonian actions and the \emph{moment map} are the absolute key concepts in the link between symplectic geometry and integrable systems. In this section we give a brief review on them, giving special attention to integrable systems with non-degenerate singularities.

\begin{defn}
Let $H\in\cC^\infty(M^{2n})$ be a smooth function on a symplectic manifold $(M^{2n},\omega)$. The \textit{Hamiltonian vector field} $X_H$ associated to $H$ is defined as the only solution of $\iota_{X_H}\omega=-dH$.
\label{def:Hamiltonianvfsymplectic}
\end{defn}

\begin{rem}
In Physics, the Hamiltonian represents a function of the total energy of a system.
\end{rem}

\begin{defn}
An \textit{integrable system} on a symplectic manifold $(M^{2n},\omega)$ is given by a smooth map $f=(f_1,\dots,f_n):M^{2n}\rightarrow\bbR^n$ such that $\{f_i,f_j\}=\omega(X_{f_i},X_{f_j})=0$ for all $1\leq f_i,f_j\leq n$ and $\rank f=n$ almost everywhere.
\end{defn}

\begin{defn}
Let $G$ be a Lie group and $\mfg$ its Lie algebra. Consider also $\mfg^*$, the dual of $\mfg$. Suppose $\psi: G \to \mathrm{Diff}(M)$ is an action on a symplectic manifold $(M,\omega)$. It is called a \textit{Hamiltonian action} if there exists a map $\mu: M \to \mfg^*$ which satisfies:
\begin{itemize}
\item For each $X \in \mfg$, $d\mu^X = \iota_{X^{\#}}\omega$, i.e., $\mu^X$ is a Hamiltonian function for the vector field $X^{\#}$, where
\begin{itemize}
\item $\mu^X: p\longmapsto \langle \mu(p),X\rangle: M \longrightarrow \bbR$ is the component of $\mu$ along $X$,
\item $X^{\#}$ is the vector field on $M$ generated by the one-parameter subgroup $\{\exp tX \mid t \in \bbR\} \subset G$.
\end{itemize}
\item
The map $\mu$ is equivariant with respect to the given action $\psi$ on $M$ and the coadjoint action:
$\mu \circ \psi_g = \mathrm{Ad}_g^* \circ \mu$, for all $g \in G$.
\end{itemize}

Then, $(M,\omega,G,\mu)$ is called a \textit{Hamiltonian $G$-space} and $\mu$ is called the \textit{moment map}.
\end{defn}

The normal form of a completely integrable system around a whole leaf of a regular point is well-known by the Arnold-Liouville-Mineur theorem (see \cite{arnoldliouville} and \cite{mineur}).

\begin{thm}[Arnold-Liouville-Mineur, \cite{arnoldliouville}]
Let $(M^{2n},\omega)$ be a symplectic manifold. Let $\{f_1,\ldots,f_n\}$ be a set of $n$ functions on $M$ which are functionally independent ($df_1\wedge\dots\wedge df_n\neq 0$ on a dense set and pairwise in involution). Suppose that $m$ is a regular point of $F=(f_1,\ldots,f_n)$ and that the level set of $F$ through $m$, which we denote by $\mathcal F_m$, is compact and connected.

Then, $\mathcal F_m$ is a torus and on a neighbourhood $U$ of $\mathcal F_m$ there exist ${\bbR}$-valued smooth functions $(p_1,\dots,p_n)$ and ${\bbR}/{\bbZ}$-valued smooth functions $({\theta_1},\dots,{\theta_n})$ such that the following holds:
\begin{enumerate}
    \item The functions $(\theta_1,\dots,\theta_n,p_1,\dots,p_n )$ define a diffeomorphism $U\simeq\bbT^n\times B^{n}$.
    \item The symplectic structure can be written in terms of these coordinates as
    \begin{equation*}
       \omega=\sum_{i=1}^n d \theta_i \wedge dp_i.
    \end{equation*}
    \item The leaves of the surjective submersion $F=(f_1,\dots,f_{s})$ are given by the projection onto the
      second component $\bbT^n \times B^{n}$, in particular, the functions $f_1,\dots,f_s$ depend only on
      $p_1,\dots,p_n$.
\end{enumerate}
We call the $p_i$ \textit{action coordinates} and the $\theta_i$\textit{ angle coordinates}.
\label{thm:ALM}
\end{thm}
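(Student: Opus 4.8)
The plan is to build the action--angle coordinates directly from the joint Hamiltonian flow of $f_1,\dots,f_n$. First I would record the elementary facts: since $\{f_i,f_j\}=0$, the Hamiltonian vector fields $X_{f_1},\dots,X_{f_n}$ pairwise commute ($[X_{f_i},X_{f_j}]=X_{\{f_i,f_j\}}=0$), they are tangent to every level set of $F$ (because $X_{f_i}f_j=\{f_j,f_i\}=0$), and at the regular point $m$ they are linearly independent ($df_1,\dots,df_n$ are, and $\omega$ is nondegenerate), hence on a whole neighbourhood. On the compact connected set $\mathcal F_m$ they therefore span the tangent space and, by compactness, are complete, so their joint flow defines an action of $\bbR^n$ on $\mathcal F_m$ with open orbits; connectedness makes it transitive, and the stabiliser is a discrete subgroup $\Lambda_m\subset\bbR^n$. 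Since $\mathcal F_m=\bbR^n/\Lambda_m$ is compact, $\Lambda_m$ has full rank, so $\mathcal F_m\cong\bbT^n$, which is the first assertion.

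Next I pass to the semi-local picture. As $m$ is regular, $F$ is a submersion onto an open ball $B^n$ near $\mathcal F_m$; after shrinking, $F$ restricted to a neighbourhood $U$ of $\mathcal F_m$ is proper, so by Ehresmann it is a trivial fibration and each fibre $\mathcal F_c$ is compact and connected, and the joint flow acts transitively on it, identifying $\mathcal F_c\cong\bbR^n/\Lambda_c$ as before. The implicit function theorem applied to the time-return map of the joint flow shows $c\mapsto\Lambda_c$ is smooth and, locally, admits a smooth basis $\lambda_1(c),\dots,\lambda_n(c)$. Choosing also a smooth section $\sigma\colon B^n\to U$ and writing $\phi_\tau$ for the time-$\tau$ flow of the joint action ($\tau\in\bbR^n$), the map $(\theta,c)\mapsto\phi_{\theta_1\lambda_1(c)+\dots+\theta_n\lambda_n(c)}\big(\sigma(c)\big)$, with $\theta$ read modulo $\bbZ^n$, is the diffeomorphism $U\simeq\bbT^n\times B^n$ of item (1); item (3) is immediate since $F$ is constant along fibres, so the $f_i$ depend only on the base coordinates.

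It remains to put $\omega$ in canonical form. The fibres $\mathcal F_c$ are Lagrangian, their tangent space being spanned by the $X_{f_i}$ with $\omega(X_{f_i},X_{f_j})=\{f_i,f_j\}=0$ and $\dim\mathcal F_c=n$; so $\omega$ vanishes on every fibre, and since $U$ deformation retracts onto $\mathcal F_m\cong\bbT^n$ the restriction $H^2(U)\to H^2(\mathcal F_m)$ is an isomorphism sending $[\omega]$ to $0$, whence $\omega=d\alpha$ for a $1$-form $\alpha$ on $U$. I would then define the \emph{action coordinates} $p_j(c)=\oint_{\gamma_j(c)}\alpha$, with $\gamma_j(c)$ the $j$-th coordinate circle of $\bbT^n\times\{c\}$; this is well defined because $\alpha$ restricts to a closed form on each Lagrangian fibre. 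A Stokes computation expresses $\partial p_j/\partial c_k$ through pairings of the period vectors $\lambda_i(c)$ against $\omega$, and nondegeneracy of $\omega$ makes the resulting ``period matrix'' invertible, so $(p_1,\dots,p_n)$ replace $(c_1,\dots,c_n)$ as base coordinates. As functions of $F$ the $p_i$ are in involution, so the $X_{p_i}$ commute; the same Stokes argument shows the time-$1$ flow of $X_{p_j}$ is exactly translation by $\gamma_j(c)$, hence the identity, so $X_{p_1},\dots,X_{p_n}$ generate a free $(\bbR/\bbZ)^n$-action with the fibres as orbits. Fixing any smooth section and defining $\theta_j$ as the flow-time along $X_{p_j}$ from it, one gets coordinates $(\theta,p)$ with $X_{p_j}=\partial/\partial\theta_j$ and $\iota_{\partial/\partial\theta_j}\omega=-dp_j$; expanding $\omega$ and using $d\omega=0$ forces $\omega=\sum_j dp_j\wedge d\theta_j+\varpi$ with $\varpi$ a closed $2$-form involving only $dp_i\wedge dp_j$, hence exact on the ball $B^n$, and absorbing its primitive into a shift $\theta_j\mapsto\theta_j-\kappa_j(p)$ yields $\omega=\sum_j d\theta_j\wedge dp_j$, which is items (1) and (2).

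The hard part is not the topological step but the analytic glue and the symplectic bookkeeping: first, showing that the period lattices $\Lambda_c$ form a smooth lattice bundle with a local basis (and that nearby fibres remain compact and connected), which needs the implicit function theorem for the time-return map together with a connectedness argument; and second, proving that the period integrals $p_j$ are genuinely independent functions --- equivalently, that the period matrix $(\partial p_j/\partial c_k)$ is invertible --- which is where nondegeneracy of $\omega$ is essential and which simultaneously forces the flows of the $X_{p_j}$ to close up with a common period, so that the conjugate angles are well defined. The Moser-type adjustment at the end and the tracking of sign conventions are routine by comparison.
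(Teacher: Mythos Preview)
The paper does not supply a proof of this theorem: it is quoted as classical background with a citation to \cite{arnoldliouville}, so there is no in-paper argument to compare against. Your outline is the standard proof (commuting complete flows give an $\bbR^n$-action on the compact connected fibre, the isotropy lattice identifies it with $\bbT^n$, Ehresmann trivialises nearby fibres, Mineur's period integrals furnish the action variables, and a final Moser-type adjustment normalises $\omega$), and it is correct as written; the only point I would flag is cosmetic --- you never explicitly say why $\alpha$ restricted to a Lagrangian fibre is closed (it is because $d\alpha=\omega$ vanishes there), but you clearly use it.
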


For singular points, where Arnold-Liouville-Mineur theorem does not apply, it is necessary to explore the moment map there at a local or semi-local level in order to obtain the topology of the whole singular leaf. It can be really difficult to understand both the geometry and the dynamics of the system depending on the degeneracy of $dF=(df_1,\ldots,df_n)$ but some results already do this work in the case of the simplest singularities, the non-degenerate singularities. For this type of singularities powerful classification results have been obtained and, for instance, we have local normal forms.

\begin{defn}
A point $m\in M^{2n}$ is \textit{singular} of an integrable Hamiltonian system given by $F=(f_1,\dots,f_n)$ if the rank of $dF=(df_1, \dots,df_n)$ at $m$ is less than $n$.
\label{def:ranksingpoint}
\end{defn}

\begin{defn}
Let $(M^{2n},\omega)$ be a symplectic manifold with an integrable Hamiltonian system of $n$ independent and commuting first integrals $f_1,\dots,f_n$. Consider a singular point $p\in M$ of rank $0$, i.e. $(df_i)_p=0$ for all $i$. It is called a \textit{non-degenerate singular point} if the operators $\omega^{-1}d^2f_1,\dots,\omega^{-1}d^2f_n$ form a Cartan subalgebra in the symplectic Lie algebra $\mathfrak{sp}(2n,\bbR)=\mathfrak{sp}(T_p M,\omega)$.
\label{def:non-degsingularpoint}
\end{defn}

The classification of non-degenerate singular points in the real case is equivalent to the classification of Cartan subalgebras and was obtained by Williamson \cite{Williamson}.

\begin{thm}[Williamson, \cite{Williamson}]
For a Cartan subalgebra $\cC$ of $\mathfrak{sp}(2n,\bbR)$, there exists a symplectic system of coordinates $(x_1,\dots, x_n, y_1,\dots,y_n)$ in $\bbR^{2n}$ and a basis of $n$ functions $f_1,\dots,f_n$ of $\cC$ such that each of the quadratic polynomials $f_i$ is one of the following:

\begin{align*}
&\quad f_i = x_i^2 + y_i^2 & {\rm for}& \ \ 1 \leq i \leq k_e \\
&\quad f_i = x_iy_i & {\rm for} &\ \ k_e+1 \leq i \leq k_e+k_h \\
&\begin{cases}f_i = x_i y_{i+1}- x_{i+1} y_i \\
f_{i+1} = x_i y_i + x_{i+1} y_{i+1}\end{cases}
& {\rm for} &\ \ i = k_e+k_h+ 2j-1, \ 1 \leq j \leq k_f
\label{eq:williamsonbasis}
\end{align*}

The three types are called \textit{elliptic}, \textit{hyperbolic} and \textit{focus-focus}, respectively.
\label{thm:WilliamsonCartanSubalgebra}
\end{thm}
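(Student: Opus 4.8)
The statement is the classical linear normal form theorem of Williamson, so the plan is to reduce it to simultaneous diagonalization of a family of commuting semisimple operators. First I would pass from quadratic Hamiltonians to linear symplectic maps: the assignment $f\mapsto \omega^{-1}d^2f$ identifies the space of quadratic forms on $(\bbR^{2n},\omega)$ with $\mathfrak{sp}(2n,\bbR)$, and under this identification the Poisson bracket of quadratics corresponds (up to sign) to the Lie bracket of matrices. Thus a Cartan subalgebra $\cC$ becomes a maximal abelian subalgebra of $\mathfrak{sp}(2n,\bbR)$ consisting of semisimple elements, of dimension $n=\rank\mathfrak{sp}(2n,\bbR)$; fix a basis $A_1=\omega^{-1}d^2f_1,\dots,A_n=\omega^{-1}d^2f_n$.

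Next I would complexify and simultaneously diagonalize. Since the $A_i$ are commuting and semisimple, $\bbC^{2n}$ splits as a direct sum of joint eigenspaces $V_\alpha$ indexed by the joint spectrum $\alpha\in\bbC^n$. Two structural constraints cut down the admissible spectrum: because the $A_i$ are real, complex conjugation sends $V_\alpha$ to $V_{\bar\alpha}$; and because each $A_i$ is infinitesimally symplectic, $\omega(V_\alpha,V_\beta)=0$ unless $\beta=-\alpha$, so $\omega$ restricts to a nondegenerate pairing between $V_\alpha$ and $V_{-\alpha}$. Maximality of $\cC$ then forces the joint kernel to vanish (otherwise $\cC$ could be enlarged by a semisimple operator supported on it) and forces each $V_\alpha$ to be one-dimensional over $\bbC$ (a joint eigenspace of complex dimension $\geq 2$ would again permit an enlargement by an operator distinguishing a splitting of it).

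Then I would organize the spectrum into orbits of the group generated by $\alpha\mapsto-\alpha$ and $\alpha\mapsto\bar\alpha$. Such an orbit has one of three shapes: $\{\pm i\beta\}$ with $\beta$ real and nonzero (elliptic), $\{\pm\alpha\}$ with $\alpha$ real and nonzero (hyperbolic), or $\{\pm\alpha,\pm\bar\alpha\}$ with $\alpha$ genuinely complex (focus-focus); the real span of the eigenvectors over a single orbit is a symplectic subspace of $\bbR^{2n}$ of dimension $2$, $2$, or $4$ respectively, and $\bbR^{2n}$ is the symplectic direct sum of these blocks. On each block I would choose Darboux coordinates adapted to the (conjugate pairs of) eigenvectors; a direct computation shows that, after rescaling the generators and choosing a suitable basis of $\cC$ block by block, the restricted generators become exactly $x_i^2+y_i^2$, $x_iy_i$, or the focus-focus pair $(x_iy_{i+1}-x_{i+1}y_i,\ x_iy_i+x_{i+1}y_{i+1})$. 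Counting dimensions gives $2k_e+2k_h+4k_f=2n$, and since the elliptic, hyperbolic and focus-focus blocks contribute $1$, $1$, $2$ to $\dim\cC$ respectively, one gets $k_e+k_h+2k_f=n=\dim\cC$, so the block generators assemble into a full basis of $\cC$.

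I expect the main obstacle to be this last step: carrying out the symplectic normalization on each block in a way that is consistent across blocks — in particular handling the $4$-dimensional focus-focus block, where two commuting generators must be brought to standard form simultaneously while keeping $\omega$ in canonical Darboux form — together with making the two maximality arguments (vanishing of the joint kernel and one-dimensionality of the joint eigenspaces) fully rigorous.
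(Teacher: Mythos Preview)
The paper does not give its own proof of this statement: Theorem~\ref{thm:WilliamsonCartanSubalgebra} is stated as a classical result and simply attributed to Williamson~\cite{Williamson}, with no argument supplied. So there is nothing in the paper to compare your proposal against.

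That said, your sketch is the standard route to Williamson's theorem and is essentially correct. The identification of quadratic Hamiltonians with $\mathfrak{sp}(2n,\bbR)$, the simultaneous diagonalization over $\bbC$ of the commuting semisimple $A_i$, the two symmetries of the joint spectrum (complex conjugation and the symplectic pairing forcing $V_\alpha\perp V_\beta$ unless $\beta=-\alpha$), and the resulting trichotomy of orbits are all right. Your use of maximality to force the joint eigenspaces to be one-dimensional and the joint kernel to vanish is the correct mechanism. The part you flag as the main obstacle --- the explicit symplectic normalization on each block, especially the $4$-dimensional focus-focus block --- is indeed where the actual work lies, but it is a finite linear-algebra computation on a $2$- or $4$-dimensional symplectic vector space and presents no conceptual difficulty once the block decomposition is in hand. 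If you want to turn this into a full proof, that block-by-block normalization (choosing real Darboux bases from the complex eigenvectors and checking the resulting quadratic forms) is the only thing left to write out.
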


The triple $(k_e,k_h,k_f)$ at a singular point of rank $k=n-k_e-k_h-2k_f$, called \textit{Williamson type} of the singularity, is an invariant of the point and an invariant of the orbit of the integrable system through the point \cite{Zung-AL1996}. The following result of Eliasson \cite{Eliasson90} and Miranda and Zung (\cite{MirandaThesis}, \cite{MirandaCentrEur}, \cite{MirandaZung}) extends the classification to Hamiltonian systems in symplectic manifolds.

\begin{thm}[Eliasson, Miranda, Zung, \cite{Eliasson90} \cite{MirandaThesis} \cite{MirandaCentrEur} \cite{MirandaZung}]
Let $F$ be an smooth integrable Hamiltonian system of degree $n$ on a symplectic manifold $(M^{2n},\omega)$. The Liouville foliation in a neighborhood of a non-degenerate singular point $m$ of rank $k$ and Williamson type $(k_e,k_h,k_f)$ is locally symplectomorphic to the foliation defined by the basis functions of Theorem~\ref{thm:WilliamsonCartanSubalgebra} plus regular functions $f_i=x_i$ for $i=k_e+k_h+2k_f+1$ to $n$.
\label{thm:locallinearization}
\end{thm}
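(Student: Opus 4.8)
The plan is to prove the statement in three moves: reduce to a singularity of rank $0$, put the quadratic parts in normal form via Williamson's classification, and then remove the higher-order remainder by a deformation argument adapted to the singular Lagrangian foliation. For the reduction, suppose the rank at $m$ is $k>0$; then $k$ of the integrals, say $g_1,\dots,g_k$, have pointwise independent and pairwise commuting Hamiltonian vector fields near $m$, so their joint flow defines a local free Hamiltonian $\bbR^k$-action. Picking a symplectic submanifold $N^{2(n-k)}$ transverse to the orbit of $m$, the system restricts to $N$ as a non-degenerate singularity of rank $0$ and Williamson type $(k_e,k_h,k_f)$, and a symplectic flow-box argument (equivalently an equivariant Darboux--Weinstein lemma for the $\bbR^k$-action) identifies a neighbourhood of $m$ with $T^*\bbR^k\times N$, sending $g_1,\dots,g_k$ to the base coordinates $x_{k_e+k_h+2k_f+1},\dots,x_n$ and the Liouville foliation to the product of the trivial foliation on $T^*\bbR^k$ with that of the restricted system on $N$. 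This reduces the problem to the rank-$0$ case, which we now relabel as dimension $2n$.

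At a rank-$0$ point $(df_i)_m=0$, and by Definition~\ref{def:non-degsingularpoint} the Hessians $d^2f_1,\dots,d^2f_n$ span a Cartan subalgebra of $\mathfrak{sp}(T_mM,\omega)$; Theorem~\ref{thm:WilliamsonCartanSubalgebra} then gives a linear symplectic change of coordinates, together with a change of basis of that subalgebra, after which the quadratic part of $f_i$ is exactly the model $q_i$ appearing in that theorem. Hence we may assume $f_i=q_i+O(3)$ and that $\omega$ coincides with $\omega_0:=\sum_i dx_i\wedge dy_i$ to second order. The key extra structure is that the elliptic blocks and the rotational component of each focus--focus block generate a local Hamiltonian action of a torus $\bbT^{k_e+k_f}$ fixing $m$; after a Bochner-type linearization of this compact germ and an equivariant Darboux correction, this torus action may be taken linear and symplectic for $\omega_0$, and it will be used as a symmetry throughout what follows.

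It now remains to replace $(f_i,\omega)$ by $(q_i,\omega_0)$ through a foliation-preserving symplectomorphism. This is done in two analytic steps: first a smooth linearization of the singular Lagrangian foliation $\cF$, i.e.\ a diffeomorphism onto a neighbourhood of the origin carrying the leaves of $\cF$ to the common level sets of $q_1,\dots,q_n$; and then a Moser correction of the symplectic form. For the latter, on the model picture one has $\omega_0$ together with the transported symplectic form $\omega_1$, both making the model foliation isotropic, and one runs the path $\omega_t=\omega_0+t(\omega_1-\omega_0)$, nondegenerate on a small enough neighbourhood, seeking a time-dependent vector field $X_t$ with $\iota_{X_t}\omega_t$ a primitive of $\omega_0-\omega_1$ lying in the annihilator of $T\cF$; away from the singular leaf this forces $X_t$ to be tangent to $\cF$ (the leaves being Lagrangian there), one checks tangency persists across the singular leaf, and the flow of $X_t$ provides the required symplectomorphism, after which a final fibrewise diffeomorphism normalizes the integrals to the $q_i$. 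Both the smooth linearization and the primitive-existence amount to a relative (foliated) Poincar\'e lemma at the singularity: along the elliptic and rotational directions the primitive is produced by averaging the standard homotopy operator over $\bbT^{k_e+k_f}$, while the hyperbolic directions, the expanding focus--focus directions and the leftover regular directions are reduced, using that torus-equivariance, to the $2$-dimensional hyperbolic model $f=xy$ and the $4$-dimensional focus--focus model, where the normalization is carried out directly (a Sternberg/Moser-type argument for the hyperbolic saddle, and the focus--focus argument of V\~{u} Ng\d{o}c--Wacheux).

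The main obstacle is precisely this foliated Poincar\'e lemma in the non-compact, non-elliptic directions: one cannot average over a compact group to absorb the hyperbolic and expanding focus--focus contributions, and the relevant leaves are non-compact, so one must instead exploit the explicit quadratic normal form together with Whitney/Borel-type division and flat-function estimates in order to see that the homotopy operators converge and that the resulting vector field stays tangent to $\cF$. This is exactly the point where Eliasson's original argument needed to be supplemented later on --- notably for hyperbolic--hyperbolic points in dimension $4$ and for focus--focus points --- which is why the result is attributed jointly to Eliasson, Miranda and Zung.
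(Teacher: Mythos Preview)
The paper does not give its own proof of this theorem: it is stated as a background result and attributed to the cited references \cite{Eliasson90}, \cite{MirandaThesis}, \cite{MirandaCentrEur}, \cite{MirandaZung}, with no argument supplied. There is therefore nothing in the paper to compare your proposal against.

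That said, your outline is a faithful summary of the strategy actually used in those references. The three moves you describe --- rank reduction by a symplectic flow-box for the regular integrals, Williamson normalization of the quadratic part, and a Moser-type deformation governed by a foliated Poincar\'e lemma with $\bbT^{k_e+k_f}$-averaging for the compact directions --- are exactly the architecture of the Eliasson--Miranda--Zung proof. You are also right to flag the non-elliptic directions as the genuine difficulty and to note that the hyperbolic and focus--focus blocks required later work to complete; this matches the history the paper alludes to. The one place where your sketch is thinner than the actual proofs is the sentence ``one checks tangency persists across the singular leaf'': in the references this is not a check but the substantive analytic content (division lemmas, flat-function control, and in the focus--focus case the argument of V\~{u} Ng\d{o}c--Wacheux or the equivariant version in \cite{MirandaZung}), and it is where most of the effort goes. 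As a proof \emph{sketch} your proposal is correct and aligned with the literature; as a proof it would need that step expanded considerably.
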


This theorem can be extended to an orbit of the integrable system via the following two Theorems.

\begin{thm}[Model in a covering]
The symplectic manifold $(M,\omega)$ can be represented, locally at a non-degenerate singularity of rank $k$ and Williamson type $(k_e,k_h,k_f)$, as the direct product
$$\overbrace{M^{\text{reg}} \times\cdots \times M^{\text{reg}}}^{k}
\times \overbrace{M^{\text{ell}} \times \cdots \times M^{\text{ell}}}^{k_e}
\times \overbrace{M^{\text{hyp}} \times \cdots \times M^{\text{hyp}}}^{k_h}
\times \overbrace{M^{\text{foc}} \times \cdots \times M^{\text{foc}}}^{k_f}$$
Where:
\begin{itemize}
    \item $M^{\text{reg}}$ is a \emph{regular block}, representing the regular moment map given by $$f_r=x,$$
    \item $M^{\text{ell}}$ is an \emph{elliptic block}, representing the elliptic singularity given by $$f_e=x^2 + y^2,$$
    \item $M^{\text{hyp}}$ is an \emph{hyperbolic block}, representing the hyperbolic singularity given by $$f_h=xy,$$
    \item $M^{\text{foc}}$ is a \emph{focus-focus block}, representing the focus-focus singularity given by
    $$f_f=(f_1,f_2)= (x_1 y_{2}- x_{2} y_1,
    x_1 y_1 + x_{2} y_{2}).$$
\end{itemize}
For the first three types of blocks the symplectic form is $\omega = dx \wedge dy$, while for the focus-focus block it is $\omega = dx_1 \wedge dy_1 + dx_2 \wedge dy_2$.
\label{thm:directproduct}
\end{thm}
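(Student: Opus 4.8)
The plan is to deduce this from the Eliasson--Miranda--Zung linearization (Theorem~\ref{thm:locallinearization}), taken in the refinement that holds in a neighbourhood of the whole singular orbit rather than just at the point (see \cite{MirandaZung}), and then to read off the product structure directly from the normal-form coordinates.

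First I would apply the normal form theorem to obtain, in a neighbourhood of the singularity, Darboux coordinates $(x_1,\dots,x_n,y_1,\dots,y_n)$ for which $\omega=\sum_{i=1}^n dx_i\wedge dy_i$ and the moment map $F=(f_1,\dots,f_n)$ is given component by component by the Williamson list of Theorem~\ref{thm:WilliamsonCartanSubalgebra} together with the $k$ regular linear functions $f_i=x_i$. This already partitions the index set $\{1,\dots,n\}$ into $k_e$ \emph{elliptic} indices, $k_h$ \emph{hyperbolic} indices, $k_f$ \emph{focus--focus} index pairs and $k=n-k_e-k_h-2k_f$ \emph{regular} indices.

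Next I would observe that both the symplectic form and the moment map respect this partition: $\omega$ is the direct sum of the forms $dx_i\wedge dy_i$ (grouping the two coordinate pairs of each focus--focus block into a single four-dimensional summand $dx_1\wedge dy_1+dx_2\wedge dy_2$), and each $f_i$ depends only on the coordinates attached to the group containing $i$. Hence the germ of the foliated symplectic manifold at the singularity is, literally, the Cartesian product of: a neighbourhood of the origin in $(\bbR^2,dx\wedge dy)$ with $f_r=x$, taken $k$ times; a neighbourhood of the origin in $(\bbR^2,dx\wedge dy)$ with $f_e=x^2+y^2$, taken $k_e$ times; a neighbourhood of the origin in $(\bbR^2,dx\wedge dy)$ with $f_h=xy$, taken $k_h$ times; and a neighbourhood of the origin in $(\bbR^4,dx_1\wedge dy_1+dx_2\wedge dy_2)$ with $f_f=(x_1y_2-x_2y_1,\,x_1y_1+x_2y_2)$, taken $k_f$ times. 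Each regular factor $(\bbR^2,dx\wedge dy,x)$ is a chart of $T^*\bbR$ carrying its canonical Liouville form; when the orbit through the point is compact these $k$ factors glue into $T^*\bbT^k$, and unwinding each circle to $\bbR$ is precisely the ``covering'' of the title.

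The one genuinely delicate point is the very first step, i.e.\ upgrading the pointwise linearization of Theorem~\ref{thm:locallinearization} to a neighbourhood of the whole (possibly toric) singular orbit. If the semilocal normal form of \cite{MirandaZung} is taken as granted, everything else above is bookkeeping. Reproving it from the pointwise statement would mean transporting the local model along the orbit using the locally free $\bbR^k$- (respectively $\bbT^k$-) action generated by the Hamiltonian flows of the regular integrals $f_i=x_i$, whose vector fields are the commuting translations $\partial/\partial y_i$, and then patching the pointwise charts by an equivariant Darboux--Moser argument; the fact that the elliptic, hyperbolic and focus--focus normal forms are invariant under this action is what makes the patching go through, and that is where the care would be needed.
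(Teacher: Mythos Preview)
Your proposal is correct and follows the same line as the paper: the paper presents this theorem as a direct consequence of the Eliasson--Miranda--Zung normal form (Theorem~\ref{thm:locallinearization} and its semilocal refinement Theorem~\ref{th:MirandaZung}), without giving an independent proof, and your argument makes explicit precisely that deduction by reading off the block product from the normal-form coordinates. Your identification of the semilocal upgrade as the only nontrivial input is accurate and matches the paper's reliance on \cite{MirandaZung} for that step.
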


In the case of a smooth system (defined by a smooth moment map), a similar result was proved and described by Miranda and Zung in \cite{MirandaZung}. It summarizes some previously results proved independently and fixes the case where there are hyperbolic components ($k_h\neq 0$), because in this case the result is slightly different and it has to be taken the semi-direct product in the decomposition. Contrary to the case where there are only elliptic and focus-focus singularities, in which the base of the fibration of the neighbourhood is an open disk, if there are hyperbolic components the topology of the fiber can become complicated. The reason is essentially that for the smooth case a level set of the form $\{x_iy_i=\epsilon\}$ is not connected but consists of two components. Precisely, because of these two components, one cotangent model gives raise to two different local models whenever there is an hyperbolic singularity. To overcome this duplicity, one takes a quotient by a finite group $\Gamma$ (typically $\bbZ_2$). This is why an equivariant version in the presence of symmetries (Theorem \ref{th:MirandaZung}) yields the total classification.

\begin{thm}[Miranda-Zung, \cite{MirandaZung}]
Let $V = D^k \times \bbT^k \times D^{2(n-k)}$ with the following coordinates: $(p_1,...,p_k)$ for $D^k$, $(q_1 (mod 1),...,q_k (mod 1))$ for $\bbT^k$, and $(x_1,y_1,...,x_{n-k},y_{n-k})$ for $D^{2(n-k)}$ be a symplectic manifold with the standard symplectic form $\sum dp_i \wedge dq_i + \sum dx_j \wedge dy_j$. Let $F$ be the moment map corresponding to a singularity of rank $k$ with Williamson type $(k_e,k_h,k_f)$. There exists a finite group $\Gamma$, a linear system on the symplectic manifold $V/\Gamma$ and a smooth Lagrangian-fibration-preserving symplectomorphism $\phi$ from a neighborhood of $O$ into $V/\Gamma$, which sends $O$ to the torus $\{p_i=x_i=y_i = 0\}$. The smooth symplectomorphism $\phi$ can be chosen so that via $\phi$, the system-preserving action of a compact group $G$ near $O$ becomes a linear system-preserving action of $G$ on $V/\Gamma$. If the moment map $F$ is real analytic and the action of $G$ near $O$ is analytic, then the symplectomorphism $\phi$ can also be chosen to be real analytic. If the system depends smoothly (resp., analytically) on a local parameter (i.e. we have a local family of systems), then $\phi$ can also be chosen to depend smoothly (resp., analytically) on that parameter.
\label{th:MirandaZung}
\end{thm}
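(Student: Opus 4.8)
The plan is to reduce the statement to the rank-$0$ normal form already available from Theorem~\ref{thm:locallinearization}, by first splitting off the toral part of the symmetry via symplectic reduction and then reconstructing a neighbourhood of $O$; the finite group $\Gamma$ will emerge as the monodromy of this reconstruction, and equivariance under the compact group $G$ will be built in by averaging at each step.

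\emph{Step 1 (linearizing the toral symmetry).} Along the compact orbit $O$ the $k$ regular components of $F$ have independent differentials, and a standard partial action--angle argument produces, in a neighbourhood of $O$, functions $(p_1,\dots,p_k)$ whose Hamiltonian flows are $1$-periodic and generate a free Hamiltonian $\bbT^k$-action with $O$ as one of its orbits, so $O\cong\bbT^k$. I would then apply the equivariant coisotropic (Darboux--Weinstein) neighbourhood theorem to this action --- a first averaging over $\bbT^k$ and over $G$ --- to obtain a symplectic slice $S$ transverse to $O$ and coordinates $(p,q)\in D^k\times\bbT^k$ in which $\omega=\sum_i dp_i\wedge dq_i+\omega_S$, with the $\bbT^k$-action translation in $q$ and moment map $p$, and in which the remaining $n-k$ components of $F$ restrict to $S$ to give a rank-$0$ non-degenerate integrable system of Williamson type $(k_e,k_h,k_f)$. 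Since $G$ preserves the whole system it commutes with this $\bbT^k$-action and acts on $(S,\omega_S)$ preserving the induced singular Lagrangian foliation.

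\emph{Step 2 (rank-$0$ model on the slice, and reconstruction).} On $S\cong D^{2(n-k)}$ I would invoke the rank-$0$ linearization, Theorem~\ref{thm:locallinearization}, together with its equivariant refinement (obtained by choosing the primitive and the Moser vector field in its proof $G$-invariantly): there is a $G$-equivariant symplectomorphism taking $(\omega_S,F|_S)$ to the standard product of $k_e$ elliptic, $k_h$ hyperbolic and $k_f$ focus-focus blocks on $D^{2(n-k)}$ with its linear form. It then remains to glue this back over $O$. The slice model extends $\bbT^k$-invariantly only over a finite cover of $O$, because transporting it around the loops of $O\cong\bbT^k$ can interchange the two connected components of each hyperbolic level set $\{x_jy_j=\epsilon\}$ and permute the focus-focus blocks; these discrete monodromies generate a finite subgroup $\Gamma$ of the foliation- and $\omega$-preserving linear automorphisms of $D^{2(n-k)}$, and the neighbourhood of $O$ becomes $G$-equivariantly symplectomorphic to $(D^k\times\bbT^k\times D^{2(n-k)})/\Gamma=V/\Gamma$, sending $O$ to $\{p_i=x_i=y_i=0\}$. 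The real-analytic statement follows by running every step (partial action--angle, Darboux--Weinstein, the Moser path) in the analytic category with the analytic local Poincaré lemma; smooth or analytic dependence on a parameter is automatic, since every construction used is natural in the given data.

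\emph{Main obstacle.} The crux is the symplectic part of Step~2: a foliated (relative) Poincaré lemma \emph{at a singular point}, i.e.\ showing that the difference between $\omega_S$ and the linear form is exact through a primitive that is simultaneously adapted to the singular Liouville foliation --- so the Moser vector field stays tangent to it --- and vanishes to high enough order along $O$ for its flow to remain in a prescribed neighbourhood, all while being $G$-invariant; the coupling of the hyperbolic blocks, where the level sets disconnect, with this cohomological vanishing is what makes the argument delicate. The secondary difficulty is organisational: pinning down exactly which finite group $\Gamma$ the hyperbolic and focus-focus monodromies generate, and verifying that the reconstructed map is an honest symplectomorphism of the quotient $V/\Gamma$ and not merely of a cover.
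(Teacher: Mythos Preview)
The paper does not prove this theorem: it is stated in the background section as a cited result from \cite{MirandaZung}, with no proof given. There is therefore nothing in the present paper to compare your proposal against.

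That said, your outline is a faithful sketch of the strategy actually used in the original Miranda--Zung paper: split off the regular $\bbT^k$-part via a partial action--angle normal form, apply the rank-$0$ Eliasson linearization on the symplectic transversal, and recover the finite group $\Gamma$ from the monodromy of the slice model around the loops of $O$ (coming essentially from the disconnected hyperbolic level sets). You have also correctly located the hard analytic core, namely the foliation-adapted relative Poincar\'e lemma at the singular point that drives the Moser argument; in the original paper this is precisely the technical lemma that does the work, and it is not something one can simply quote. Two small caveats: first, the $G$-equivariance is not obtained by a naive averaging after the fact but must be threaded through the construction of the primitive in the Poincar\'e lemma, so ``averaging at each step'' understates the issue; second, the parametric statement is not quite ``automatic'', since the smooth dependence of the Moser flow on the parameter requires uniform control of the primitive, which again comes from the explicit form of the singular Poincar\'e lemma rather than abstract naturality.
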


In summary, given an integrable system, there is a naturally associated Lagrangian foliation given by a distribution generated by the Hamiltonian. This result does not only classify integrable systems but also classifies Lagrangian foliations \cite{MirandaCentrEur}.

\subsection{The cotangent lift}
The cotangent bundle of a smooth manifold can be naturally equipped with a symplectic structure in the following way. Let $M$ be a differential manifold and consider its cotangent bundle $T^{*}M$. There is an intrinsic canonical linear form $\lambda$ on $T^{*}M$ defined pointwise by
\begin{equation*}
    \langle \lambda_p,v\rangle=\langle p,d\pi_pv\rangle, \hspace{25pt} p=(m,\xi)\in T^{*}M,v\in T_p(T^{*}M),
\end{equation*}
where $d\pi_p:T_p(T^{*}M)\longrightarrow T_mM$ is the differential of the canonical projection at $p$. In local coordinates $(q_i,p_i)$, the form is written as $\lambda=\sum_i p_i\,dq_i$ and is called the \textit{Liouville $1$-form}. Its differential $\omega=d\lambda=\sum_i dp _i\wedge dq_i$ is a symplectic form on $T^{*}M$.

\begin{defn}
Let $\rho:G\times M \longrightarrow M$ be a group action of a Lie group $G$ on a smooth manifold $M$. For each $g\in G$, there is an induced diffeomorphism $\rho_g:M\longrightarrow M$. The \textit{cotangent lift of $\rho_g$}, denoted by $\hat{\rho}_g$, is the diffeomorphism on $T^{*}M$ given by
\begin{equation*}
    \hat{\rho}_g(q,p):=(\rho_g(q),((d{\rho_g)}_q^{*})^{-1}(p)),\hspace{25pt}(q,p)\in T^*M
\end{equation*}
which makes the following diagram commute:
\begin{center}
\begin{tikzpicture}
  \matrix (m) [matrix of math nodes,row sep=4em,column sep=4em,minimum width=2em]
  {T^*M & T^*M \\
   M & M\\};
  \path[-stealth]
    (m-1-1) edge node [right] {$\pi$} (m-2-1)
            edge node [above] {$\hat{\rho}_g$} (m-1-2)
    (m-2-1) edge node [above] {${\rho}_g$} (m-2-2)
    (m-1-2) edge node [right] {$\pi$} (m-2-2);
\end{tikzpicture}
\end{center}
\end{defn}

Given a difeomorphism $\rho:M\longrightarrow M$, its cotangent lift preserves the canonical form $\lambda$ (see computations in \cite{MM}). Then, the canonical $1$-form is preserved by $\hat\rho$.

As a consequence:
$$\hat\rho^*(\omega)=\hat\rho^*(d\lambda)=d(\hat\rho^*\lambda)=d\lambda=\omega.$$
Meaning that the cotangent lift $\hat{\rho}_g$ preserves the Liouville form and the symplectic form of $T^*M$.

\begin{examp}
Let $\rho:(\bbR^3,+)\times \bbR^3 \to \bbR^3$ be the Lie group action corresponding to a space translation defined by $\rho_x(q)=q+x$. Write $(q,p)$ for an element of the cotangent bundle $T^*\bbR^3\cong\bbR^6$.

By definition, $\hat{\rho}_x$, the cotangent lift of $\rho_x$ is
\begin{equation}
\hat{\rho}_x(q,p)=(\rho_x(q),((d{\rho_x)}_q^{*})^{-1}(p))=(q+x,((Id^{*})^{-1}(p))=(q+x,p).
\label{examp:cotangentlifttranslation}
\end{equation}
    
\end{examp}

\begin{examp}
Let $\rho:SO(3,\bbR)\times \bbR^3 \to \bbR^3$ be a Lie group action defined by $\rho_A(q)=Aq$. Write $(q,p)$ for an element of $T_q^*\bbR^3$. By definition, $\hat{\rho}_A$, the cotangent lift of $\rho_A$ is $$\hat{\rho}_A(q,p)=(\rho_A(q),((d{\rho_A)}_q^{*})^{-1}(p))=(Aq,((A^{*})^{-1}(p))=(Aq,Ap),$$ where the last equality holds because $A$ is orthogonal. Like any cotangent lift, since the induced action in the cotangent bundle is Hamiltonian, it has an associated momentum map which, in this case, corresponds to the classical quantity $q\wedge p$.
\label{examp:cotangentliftrotation}
\end{examp}

\subsection{Overview on geometric quantization}

As a general principle, quantization consists in associating a Hilbert space $\mathcal{Q}$ to a symplectic manifold $(M,\omega)$. In geometric quantization, this Hilbert space is constructed using the sections of a complex line bundle $\bbL$. Normally, one declares as representation space flat sections of this bundle in some direction (given by a polarization). Such sections are not always defined globally along the leaves of the polarization. This is why it is convenient to use the sheaf-theoretic language (with sheaf meaning the sheaf of flat sections of the bundle) to surmount this difficulty. Kostant introduced the main ideas of geometric quantization in the 70s \cite{Kostant70} and, today, they remain useful and have applications in representation theory, a big variety of physical problems and many other fields. One of the main characters of the theory of geometric quantization are \BS leaves. Kostant's model goes through the cohomology associated to the sheaf of flat sections of $\bbL$ and is well-adapted for real polarizations given by integrable systems and toric manifolds, which are symplectic manifolds endowed with an effective Hamiltonian action of a torus whose rank is half of the dimension of the manifold \cite{MirandaCentrEur}.

An important result of Delzant, which connects quantization and moment maps, states the existence of a one-to-one correspondence between closed toric manifolds in dimension $2n$ and the \textit{Delzant polytope} on $\bbR^n$ \cite{delzant}. The Delzant polytope gives the real geometric quantization of closed toric manifolds \cite{Hamilton} because, given a toric manifold, its real geometric quantization is completely determined by the count of integer points in the interior of its associated Delzant polytope.

In the Kähler quantization, the Hilbert space of representations is defined from the set of holomorphic global sections of the complex line bundle and give $H^0(M,\bbL)$. In our case, we will use real polarizations, for which there are not globally defined leaf-wise flat sections. Therefore, we will need to look for the higher dimensional cohomology groups.

Segal also proposed a way to quantize a Hamiltonian system consisting essentially in associating to the phase space a real Hilbert space $(\cF,(,))$ of the states of one particle \cite{segal67}, bringing a symplectic structure $\omega$ and a complex structure $J$ such that the complexification $H$ of $\cF$ under $J$ has a complex scalar product $(\,,)_{\bbC}$ defined as $$(\,,)_{\bbC}= (\,,) +J\omega$$ and the Hamiltonian evolution of the system is expressed by a unitary flow.

Although Segal quantization is quite useful for many purposes, in this article we focus on Kostant's viewpoint, since it is a more convenient geometric quantization to deal with cotangent models. In particular, as we will provide cotangent models for regular leaves and non-degenerate singularities of integrable systems, it will be the right approach for this article.

\subsection{Prequantization and sheaf cohomology}

\begin{defn}
Let $(M,\omega)$ be a symplectic manifold. It is \textit{quantizable} if there exists a hermitian complex line bundle $\bbL$ over $M$ with a compatible connection $\nabla$ whose curvature is $\omega$. $\bbL$ is called the \textit{prequantization line bundle}.
\label{def:preqlinebundle}
\end{defn}

\begin{rem}
$(M,\omega)$ is quantizable if $\omega$ is exact, which occurs for cotangent bundles with the canonical symplectic structure and for compact manifolds if $[\omega]\in H^2(M,\bbZ)$.
\end{rem}

One additional piece of structure is required, a \textit{polarization}, to restrict which sections of $\bbL$ are considered, because the space of all sections is normally "too big".

\begin{defn}
A \textit{polarization} $\cP$ on a symplectic manifold $(M,\omega)$ is an integrable Lagrangian distribution in $TM\otimes \bbC$.
There are two polarizations in which we are mainly interested:
\begin{itemize}
    \item A \textit{real polarization}, in which $\Bar\cP=\cP$.
    \item A \textit{Kähler polarization} (or \textit{holomorphic polarization}), in which $\Bar\cP\cap\cP=\{0\}$ and the hermitian form
    $$i\omega(\cdot,\cdot):\Bar\cP\times\cP\longrightarrow C_{\bbC}^{\infty}(M)$$
    is positive definite.
\end{itemize}
\label{def:polarization}
\end{defn}

\begin{rem}
A \textit{real polarization} on a symplectic manifold $(M,\omega)$ is basically a foliation of $M$ into Lagrangian submanifolds.
\label{rem:realpolarization}
\end{rem}

We will usually want to compute the quantization of a compact completely integrable system $(M,\omega,F)$, using the singular real polarization given by the singular foliation by level sets of $F$, which are generically Lagrangian tori.

\begin{defn}
A section $\sigma$ of $\bbL$ is \textit{flat along the leaves} or \textit{leaf-wise flat} if it is covariant constant along the fibres of $F$, with respect to the prequantization connection $\nabla$. Namely, if $\nabla_X \sigma = 0$ for any vector field $X$ tangent to fibres of $F$. The set of sections which are flat along the leaves is a sheaf and it is denoted by $\cJ$.
\label{def:flat}
\end{defn}

We now recall the construction of the cohomology of sheaves or \textit{sheaf cohomology}, which is used to define the geometric quantization. We start defining presehaves and sheaves.

\begin{defn}
Let $X$ be a topological space. A \textit{presheaf} $\cF$ on $X$ assigns to every open set $U$ of $X$ an abelian group $\cF(U)$, usually called the set of \textit{sections} of $\cF$ over $U$.
It also assigns, to any $V \subset U$, a \textit{restriction map} $\cF(U) \to \cF(V)$, such that if $W \subset V \subset U$ and 
$\sigma \in \cF(U)$, then $$\sigma|_{W} = (\sigma|_{V})|_{W},$$
and if $V=U$ then the restriction is just the identity map.
\label{def:presheaf}
\end{defn}

\begin{defn}
A presheaf $\mathcal{J}$ is a \textit{sheaf} if the following properties hold:
\begin{enumerate}
\item For any pair of open sets $U$, $V$, and sections $\sigma \in \mathcal{J}(U)$
and $\tau \in \mathcal{J}(V)$ which agree on the intersection $U\cap V$, there exists a section $\rho \in \mathcal{J}(U\cup V)$ which restricts to 
$\sigma$ on $U$ and $\tau$ on $V$.

\item If $\sigma$ and $\tau$ in $\mathcal{J}(U\cup V)$
have equal restrictions to $U$ and $V$, then they are equal
on $U\cup V$.
\end{enumerate}
\label{def:sheaf}
\end{defn}

Now, we construct the cochains and the coboundary operator of the cohomology. Fix an open cover $\mathcal{A} = \{ A_\alpha\}$ of the manifold $M$. A \textit{\v Cech $k$-cochain} assigns, to each $(k+1)$-fold intersection of elements from the cover $\mathcal{A}$, a section of the sheaf $\mathcal{J}$, and we will denote this kind of intersection $A_{\alpha_0} \cap \cdots \cap A_{\alpha_k}$, where the $\alpha_j$ are distinct, simply by $A_{\alpha_0 \cdots \alpha_k}$. Then, a $k$-cochain is an assignment 
$f_{\alpha_0 \cdots \alpha_k} \in \mathcal{J}(A_{\alpha_0 \cdots \alpha_k})$
for each $(k+1)$-fold intersection in the cover $\mathcal{A}$. The set of $k$-cochains is denoted by $C^k_\mathcal{A}(M;\mathcal{J})$, or just by $C^k_\mathcal{A}$.

The coboundary operator $\delta$ that makes $C^\ast_\mathcal{A}$ into a cochain complex is defined in the following way:
\begin{equation}\label{defdelta}
 (\delta f)_{\alpha_0 \cdots \alpha_k} = 
\sum_{j=0}^{k} (-1)^j f_{\alpha_0 \cdots \hat{\alpha}_j \cdots \alpha_k}
|_{A_{\alpha_0 \cdots \cdots \alpha_k}},
\end{equation}
where the $\,\hat{}\,$ denotes that the index is omitted. Then, it is clear that if $f = \{f_{\alpha_0 \cdots \alpha_{k-1}} \}$ is a $(k-1)$-cochain, $\delta f$ is a $k$-cochain. With this definition, for instance, $(\delta f)_{123} = f_{23} - f_{13} + f_{12}$ and $(\delta \circ \delta f)_{123} = \delta (f_{23} - f_{13} + f_{12}) = f_{3} - f_2 - f_{3} + f_1 + f_{2} - f_1 = 0$. In general, $\delta\circ\delta=0$ and $C^\ast_\mathcal{A}$ is a well-defined cochain complex.

\begin{defn}
With the above definitions, the sheaf cohomology with respect to the cover $\mathcal{A}$ is the cohomology of this complex:
$$H^k_\mathcal{A} (M;\mathcal{J}) = \frac{\ker \delta^{k}}{\text{im}\, \delta^{k-1}},$$
where by $\delta^k$ denotes the map $\delta$ on $C^k_\mathcal{A}$.
\label{def:sheafcohomologywrtacover}
\end{defn}

The sheaf cohomology that is useful to work with is defined independently of the cover, the way of doing it is to take a limit over cover refinements. A cover $\mathcal{B}$ is a \textit{refinement} of a cover $\mathcal{A}$ if every element of $\mathcal{B}$ is a subset of some element of $\mathcal{A}$. A refinement provides a map $\rho \colon \mathcal{B} \to \mathcal{A}$, where $B \subset \rho(B)$ for all $B\in \mathcal{B}$, and gives a map $\phi \colon C^k_\mathcal{A}(U,\mathcal{J}) \to C^k_\mathcal{B}(U,\mathcal{J})$ induced by the restriction maps in the sheaf. Then, if $\eta \in C^k_\mathcal{A}$ is a cochain, $\phi \eta$ is defined by
$$(\phi \eta)_{B_0 B_1 \cdots B_k} = (\eta)_{(\rho B_0) (\rho B_1) \cdots ( \rho B_k)} |_{B_0 B_1 \cdots B_k}.$$

This map commutes with $\delta$ and induces a map on cohomology $H^*_\mathcal{A} \to H^*_\mathcal{B}$. All the possible choices of maps $\rho$ turn the collection of $H^\ast_\mathcal{A}$ for all open covers of $M$ into a directed system and the sheaf cohomology can be defined as the limit of this system, which can be proved to exist.

\begin{defn}
The \textit{sheaf cohomology} of $M$ is defined as the limit of the directed system:
$$H^*(M;\mathcal{J}) = \varinjlim H^*_\mathcal{A}(M;\mathcal{J}).$$
\label{def:sheafcohomology}
\end{defn}
There is a last result on sheaf cohomology that we will use, which is presented as Theorem 3.4 in \cite{MirandaPresas15}. It gives a classical Künneth formula also holds for the sheaf cohomology in a generalized form. Let $(M_1, \cJ_1)$ and $(M_2, \cJ_2)$ be a pair of symplectic manifolds endowed with Lagrangian foliations. The induced sheaf of flat sections associated to the product is denoted

There is a last result on sheaf cohomology that we will use, which is presented as Theorem 3.4 in \cite{MirandaPresas15}. It gives a classical Künneth formula also holds for the sheaf cohomology in a generalized form. Let $(M_1, \cJ_1)$ and $(M_2, \cJ_2)$ be a pair of symplectic manifolds endowed with Lagrangian foliations. The induced sheaf of flat sections associated to the product is denoted $\cJ_{12}$ and we call it \emph{product sheaf}. Then, there is a natural morphism of cohomology groups,
\begin{equation}
\Psi: H^*(M_1, \cJ_1) \otimes H^*(M_2, \cJ_2) \to H^*(M_1 \times M_2, \cJ_{12})
\end{equation}
induced by pull-back of the classes through the natural projections, and there is an isomorphism of cohomology groups 
$$ H^n(M_1 \times M_2, \cJ_{12}) \cong \bigoplus_{p+q=n} H^p(M_1, \cJ_1) \otimes H^q(M_2, \cJ_2), $$ if the geometric quantization associated to $(M_1,\cJ_1)$ has finite dimension, $M_1$ is compact and $M_2$ admits a good covering or is compact.

\subsection{Geometric quantization of regular and singular systems}

\begin{defn}
Let $(M,\omega,F)$ and $\bbL$ be as above and let $\cJ$ be the sheaf of flat sections. The \textit{quantization} of $M$ is
\begin{equation*}
\mathcal{Q}(M) = \bigoplus_{k\geq 0} H^k(M;\cJ).
\end{equation*}
\label{def:quant}
\end{defn}

\begin{defn}\label{d:bs}
A leaf $\ell$ of the (singular) foliation is a \textit{\BS leaf}
if there is a leaf-wise flat section $\sigma$ defined over all of $\ell$.
\end{defn}

Although leaf-wise flat sections always exist locally (because by construction the curvature of $\nabla$ is $\omega$, which is zero when restricted to a leaf), we are requiring global existence, which is a strong condition. The set of \BS leaves is discrete in the leaf space and a leaf is \BS if and only if its holonomy is trivial around all the loops contained in the leaf.

The main result about quantization using real polarizations is a theorem of \'Sniatycki~\cite{sniatpaper}

\begin{thm}[\'Sniatycki, \cite{sniatpaper}]
Let $(M^{2n},\omega)$ be a symplectic manifold with a prequantization line bundle $\bbL$. Take a real polarization $P$ such that the projection map $\pi \colon M \to B$ is a fibration with compact fibres. Then, $H^q(M;\cJ) = 0$ for all $q\neq n$. Furthermore, $H^n$ can be expressed in terms of the Bohr-Sommerfeld leaves and the dimension of $H^n$ is exactly the number of Bohr-Sommerfeld leaves.
\label{th:sniat}
\end{thm}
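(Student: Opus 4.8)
The plan is to compute $H^\ast(M;\cJ)$ by resolving $\cJ$ with a fine complex of leafwise forms, pushing this complex forward along $\pi$, and then analysing the resulting higher direct images $R^q\pi_\ast\cJ$ on $B$. Concretely, write $P$ for the polarization and $\Omega^k_P(\bbL)$ for the sheaf of smooth sections of $\Lambda^kP^\ast\otimes\bbL$, i.e.\ leafwise $k$-forms twisted by $\bbL$. The prequantum connection $\nabla$ restricts to a partial connection along $P$, and since every leaf is Lagrangian, $\omega$ vanishes on it, so this partial connection is flat; the leafwise Poincar\'e lemma then yields a resolution
$$0\longrightarrow\cJ\longrightarrow\Omega^0_P(\bbL)\xrightarrow{\;d_P\;}\Omega^1_P(\bbL)\xrightarrow{\;d_P\;}\cdots\xrightarrow{\;d_P\;}\Omega^n_P(\bbL)\longrightarrow 0,$$
with $d_P$ the leafwise covariant exterior derivative and $\cJ=\ker\bigl(d_P\colon\Omega^0_P(\bbL)\to\Omega^1_P(\bbL)\bigr)$. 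Each $\Omega^k_P(\bbL)$ is a $C^\infty(M)$-module, hence a fine, in particular soft, sheaf, and is therefore $\pi_\ast$-acyclic since $\pi$ is proper (its fibres being compact). Consequently $H^\ast(M;\cJ)$ is the hypercohomology of $\pi_\ast\Omega^\bullet_P(\bbL)$ on $B$, whose cohomology sheaves are exactly the higher direct images $R^q\pi_\ast\cJ$.

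Next I would compute $R^q\pi_\ast\cJ$ from a local model. Over a small cube $V\subset B$, the Arnold--Liouville--Mineur theorem (Theorem~\ref{thm:ALM}) identifies $\pi^{-1}(V)$ with $\bbT^n\times V$, with angle coordinates $\theta$ on $\bbT^n$, action coordinates $p$ on $V$, $\omega=\sum d\theta_i\wedge dp_i$ and $P$ the vertical foliation; after a suitable choice of trivialisation of $\bbL$ and of the origin of the action coordinates one has $\nabla_{\partial/\partial\theta_j}=\partial_{\theta_j}-2\pi i\,p_j$, which is precisely the place where the Liouville $1$-form of the cotangent picture realises the connection $1$-form. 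Expanding sections and leafwise forms in Fourier series in $\theta$, the complex $\bigl(\Omega^\bullet_P(\bbL)(\bbT^n\times V),d_P\bigr)$ splits as a direct product over modes $k\in\bbZ^n$, and on the $k$-th mode $d_P$ becomes wedging with the covector $\xi_k(p)=2\pi i\sum_j(k_j-p_j)\,d\theta_j$, i.e.\ the Koszul differential of $\xi_k(p)$. This Koszul complex is exact whenever $\xi_k(p)\neq0$, that is, off the resonant locus $p=k$; combining Koszul exactness with the closedness hypothesis and Hadamard's lemma to keep the primitives smooth in $p$ across the isolated resonances, one finds that $H^q(\bbT^n\times V;\cJ)$ vanishes for $q\neq n$ and, for $q=n$, contributes one copy of $\bbC$ for each integer point $k\in V$, i.e.\ for each \BS leaf meeting $\pi^{-1}(V)$. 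Passing to the colimit over shrinking $V$ gives $R^q\pi_\ast\cJ=0$ for $q\neq n$, while $R^n\pi_\ast\cJ$ is supported on the discrete set of \BS leaves with stalk $\bbC$ at each one.

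Then I would feed this into the Leray spectral sequence $E_2^{p,q}=H^p(B;R^q\pi_\ast\cJ)\Rightarrow H^{p+q}(M;\cJ)$. Only the row $q=n$ is nonzero, so $E_2=E_\infty$ and $H^m(M;\cJ)\cong H^{m-n}(B;R^n\pi_\ast\cJ)$; since $R^n\pi_\ast\cJ$ is supported on a discrete subset of $B$ it is flasque, hence acyclic, so $H^p(B;R^n\pi_\ast\cJ)=0$ for $p>0$ and $H^0(B;R^n\pi_\ast\cJ)=\bigoplus_\ell\bbC$, the sum ranging over the \BS leaves $\ell$. Therefore $H^q(M;\cJ)=0$ for $q\neq n$ and $H^n(M;\cJ)\cong\bigoplus_\ell\bbC$, so $\dim H^n(M;\cJ)$ equals the number of \BS leaves.

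The step I expect to be the real obstacle is the fibrewise computation of $R^q\pi_\ast\cJ$: one must upgrade the pointwise statement (the twisted $\bbT^n$-de Rham cohomology of a single leaf concentrates only at integral $p$, where however it a priori lives in all degrees) to a statement about the whole complex over $V$, which means controlling the Fourier-mode division $f_k=\xi_k(p)^{-1}g_k$ uniformly in smooth families of sections and, at the resonant modes, isolating a genuinely one-dimensional top-degree obstruction rather than a larger space. Here it is essential that the resonances $k=p$ occur only at isolated integral points, so there is no genuine small-denominator problem and Hadamard's lemma suffices to restore smoothness in the parameter; this is exactly what collapses the lower-degree contributions and leaves only degree $n$. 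The remaining ingredients---the formalism of fine resolutions and hypercohomology, the degeneration of Leray, and the acyclicity of sheaves supported on discrete sets---are standard.
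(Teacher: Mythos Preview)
The paper does not prove this theorem: it is quoted as \'Sniatycki's result with a citation to \cite{sniatpaper}, so there is no in-paper proof to compare against. Your argument is correct and is essentially \'Sniatycki's own approach---a fine resolution by leafwise $\bbL$-valued forms, a local Koszul/Fourier computation of $R^q\pi_\ast\cJ$ in the action-angle model, and collapse of the Leray spectral sequence---and you have correctly located the only genuinely nontrivial step (smoothness of the mode-by-mode primitives across the isolated resonances, handled by the regularity of $(p_1-k_1,\ldots,p_n-k_n)$ in $C^\infty$ via iterated Hadamard). Two small comments: fineness already yields $\pi_\ast$-acyclicity without appealing to properness, and the Fourier splitting is a completed sum rather than a direct product, so one should observe that the primitives retain rapid decay (they do, since $|\xi_k|$ grows like $|k|$). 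The closest the paper comes to a proof is Appendix~\ref{sec:cohomcyl}, which treats the $n=1$ case by an explicit three-set \v Cech computation and a $3\times 3$ linear system; that route is more elementary and entirely concrete, but it is specific to a single circle fibre and does not extend to higher $n$ without the spectral-sequence machinery you invoke.
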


This result implies that, for a toric manifold foliated by fibres of the moment map, the \BS leaves correspond to the integer lattice points in the interior of moment polytope, excluding the ones on the boundary (see Figure \ref{fig:BStriangle}).

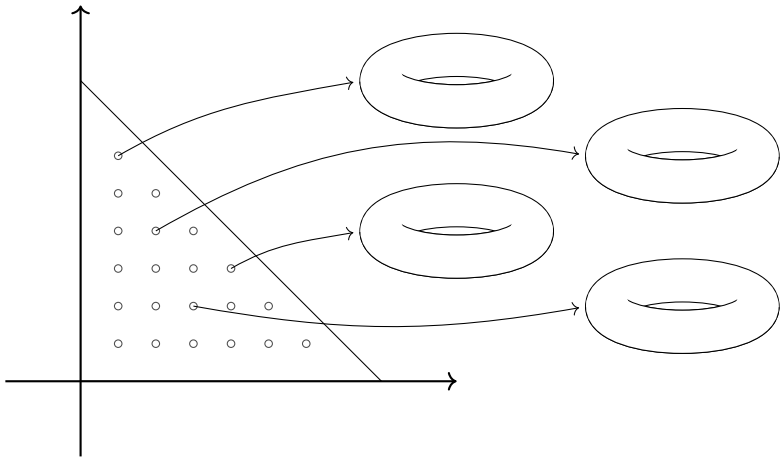
\begin{figure}[ht!]
\begin{center}\begin{tikzpicture}
\draw (0,0) -- (4,0) -- (0,4) -- (0,0);
\draw[thick, ->] (-1,0) -- (5,0);
\draw[thick, ->] (0,-1) -- (0,5);
\draw[black!70] (0.5,0.5) circle (0.05);
\draw[black!70] (0.5,1) circle (0.05);
\draw[black!70] (0.5,1.5) circle (0.05);
\draw[black!70] (0.5,2) circle (0.05);
\draw[black!70] (0.5,2.5) circle (0.05);
\draw[black!70] (0.5,3) circle (0.05);
\draw[black!70] (1,0.5) circle (0.05);
\draw[black!70] (1,1) circle (0.05);
\draw[black!70] (1,1.5) circle (0.05);
\draw[black!70] (1,2) circle (0.05);
\draw[black!70] (1,2.5) circle (0.05);
\draw[black!70] (1.5,0.5) circle (0.05);
\draw[black!70] (1.5,1) circle (0.05);
\draw[black!70] (1.5,1.5) circle (0.05);
\draw[black!70] (1.5,2) circle (0.05);
\draw[black!70] (2,0.5) circle (0.05);
\draw[black!70] (2,1) circle (0.05);
\draw[black!70] (2,1.5) circle (0.05);
\draw[black!70] (2.5,0.5) circle (0.05);
\draw[black!70] (2.5,1) circle (0.05);
\draw[black!70] (3,0.5) circle (0.05);
\node (T1) at (8,1) {};
\node (T1b) [left = of T1] {};
\node (T2) at (8,3) {};
\node (T2b) [left = of T2] {};
\node (T3) at (5,4) {};
\node (T3b) [left = of T3] {};
\node (T4) at (5,2) {};
\node (T4b) [left = of T4] {};
\pic at (T1) {torus={1cm}{2.8mm}{70}};
\pic at (T2) {torus={1cm}{2.8mm}{70}} {};
\pic at (T3) {torus={1cm}{2.8mm}{70}} {};
\pic at (T4) {torus={1cm}{2.8mm}{70}} {};
\draw [->] (0.5,3) to [out=30,in=190] (T3b);
\draw [->] (1,2) to [out=30,in=170] (T2b);
\draw [->] (2,1.5) to [out=30,in=190] (T4b);
\draw [->] (1.5,1) to [out=350,in=190] (T1b);
\end{tikzpicture}\end{center}
\caption{The integer points in the interior of Delzant's polytope correspond to the Bohr-Sommerfeld leaves, which are regular tori.}
\label{fig:BStriangle}
\end{figure}

In \cite{Hamilton}, Mark Hamilton calculated the sheaf cohomology of a toric manifold, a manifold equipped with a Lagrangian fibration with elliptic singularities. He obtained the explicit expression for the group that is non-zero in Theorem \ref{th:sniat}, which can be computed by counting over all non-singular Bohr-Sommerfeld fibres.

\begin{thm}[Hamilton, \cite{Hamilton}]
Let $(M,\omega)$ be a compact symplectic $2n$-manifold and suppose it is equipped with a locally toric singular Lagrangian fibration, with prequantization line bundle $\bbL$ and connection $\nabla$. Let $\cJ$ be the sheaf of leaf-wise flat sections of $\bbL$. Then, the cohomology groups $H^k(M;\cJ)$ are zero for all $k \neq n$, and $$H^n (M;\cJ) \cong \bigoplus_{b \in BS} \bbC$$ where the sum is taken over all non-singular \BS fibres.
\label{thm:hamlocaltor}
\end{thm}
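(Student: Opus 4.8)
The plan is to reduce the global computation to local models via a Mayer--Vietoris/\v{C}ech argument and then invoke \'Sniatycki's theorem (Theorem~\ref{th:sniat}) on the regular part, handling the contribution of the elliptic singular strata by a direct local calculation. First I would fix a good cover $\mathcal{A}=\{A_\alpha\}$ of $M$ adapted to the locally toric structure: each $A_\alpha$ is either a neighbourhood of a regular fibre, diffeomorphic to $\bbT^n\times B^n$ with action-angle coordinates, or a neighbourhood of a singular orbit, which by Theorem~\ref{thm:directproduct} (in the purely elliptic case, $k_h=k_f=0$) splits as a product of $k$ regular blocks $T^*\bbT^1$ and $k_e$ elliptic blocks $M^{\mathrm{ell}}$ with moment map $x^2+y^2$. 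On such a block the flat sections of $\bbL$ vanish identically near the singular fibre (the holonomy is nontrivial around the shrinking circles unless one is at a lattice point, and the section is forced to vanish at the center), so the sheaf $\cJ$ restricted to a purely-singular local piece is acyclic except at the vertex lattice points, where it contributes exactly one copy of $\bbC$ in top degree. This is where the product sheaf K\"unneth formula (Theorem~3.4 of \cite{MirandaPresas15}, stated above) does the bookkeeping: $H^*(M^{\mathrm{reg}}\times\cdots\times M^{\mathrm{ell}}\times\cdots,\cJ_{12\cdots})\cong\bigoplus H^{p_1}\otimes\cdots\otimes H^{p_r}$, and each elliptic factor contributes $\bbC$ in degree $1$ exactly when the corresponding \BS condition holds and $0$ otherwise.

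Next I would run the \v{C}ech machinery: the local computations above show that on each element of $\mathcal{A}$ and on each intersection, $H^q(A_{\alpha_0\cdots\alpha_p};\cJ)$ is concentrated in a single degree equal to the number of "toric" (i.e.\ angle) directions present, so the spectral sequence of the cover (or an inductive Mayer--Vietoris over the strata of the moment polytope, from deepest to shallowest) degenerates: there is no room for differentials to be nonzero and no possibility of cancellation between adjacent \BS fibres because the supporting degrees are controlled by the local dimension count. Vanishing in degrees $k\neq n$ then follows: a regular fibre contributes only in degree $n$ by \'Sniatycki, and a singular fibre of rank $k$ with $k_e=n-k$ elliptic components contributes $\bbC^{\otimes k}\otimes\bbC^{\otimes k_e}$ placed in degree $k+k_e=n$ as well, provided the \BS condition is met; all lower intersections contribute in degrees $<n$ but are killed in the limit over refinements / by the exactness of the strata that are not \BS. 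Summing over the cover gives $H^n(M;\cJ)\cong\bigoplus_{b\in BS}\bbC$, the sum over nonsingular \BS fibres, exactly because a singular fibre is \BS only if every elliptic component sits at an allowed integer value, which forces the adjacent regular fibre to be a lattice point of the polytope — so the two notions of "\BS fibre" coincide on the nonsingular locus and the singular fibres contribute nothing new.

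I expect the main obstacle to be the careful patching step, not the local models: one must verify that the \v{C}ech--to--sheaf spectral sequence for the chosen cover actually degenerates and that the limit over refinements does not introduce spurious classes or kill genuine ones. Concretely, the delicate point is showing that the "boundary" fibres of the polytope (where an elliptic component is at its minimum but the orbit is not \BS) really contribute $0$ and do not leak a class into $H^{n-1}$ or $H^{n+1}$ via the connecting homomorphisms of Mayer--Vietoris; this requires that the restriction maps $\cJ(A_\alpha)\to\cJ(A_{\alpha\beta})$ between a regular piece and an adjacent singular piece be surjective (or have cokernel concentrated in the right degree), which in turn rests on the explicit form of parallel transport for $\nabla$ in action-angle coordinates near the elliptic stratum. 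A secondary technical point is checking the hypotheses of the K\"unneth formula at each product decomposition (finite-dimensionality of one factor's quantization, compactness or existence of a good cover for the other) — these hold because the elliptic blocks have finite-dimensional (indeed at most one-dimensional) local quantization and the regular blocks are $T^*\bbT^k$, which admits a good cover. Once these are in place, the theorem follows by assembling the local data.
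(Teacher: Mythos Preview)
This theorem is not proved in the present paper; it is stated as a cited background result from Hamilton's work \cite{Hamilton}. There is therefore no ``paper's own proof'' to compare your proposal against. That said, your outline is broadly in the spirit of Hamilton's original argument (local models plus a \v{C}ech/Mayer--Vietoris assembly), so it is worth flagging one substantive error in your local computation.

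You claim that ``the sheaf $\cJ$ restricted to a purely-singular local piece is acyclic except at the vertex lattice points, where it contributes exactly one copy of $\bbC$ in top degree,'' and later that ``each elliptic factor contributes $\bbC$ in degree $1$ exactly when the corresponding \BS condition holds.'' This is not correct, and it is precisely the content of Hamilton's theorem that it is not: the singular elliptic fibres contribute \emph{nothing}, even when they sit at lattice points. The sum in the statement is explicitly over \emph{non-singular} \BS fibres. Concretely, in the $2$-dimensional elliptic model (a disk with moment map $s=\tfrac12(x^2+y^2)$ and potential $s\,d\theta$), a small neighbourhood of the origin has $H^k(\,\cdot\,;\cJ)=0$ for all $k$, including $k=1$: the degenerate leaf $\{0\}$ is a point, and although it trivially carries a flat section, the cohomological mechanism that produces a $\bbC$ for a regular \BS circle (the rank-drop of the \v{C}ech coboundary matrix at an integer value of the action, as in Appendix~\ref{sec:cohomcyl}) does not occur there. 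Your later sentence that ``the singular fibres contribute nothing new'' is the right conclusion, but the reasoning you give for it (that their contribution coincides with an adjacent regular fibre) is not how the vanishing arises; it comes from the local acyclicity of the elliptic block itself. Getting this local input right is essential, since the K\"unneth and Mayer--Vietoris machinery you invoke will otherwise propagate a spurious $\bbC$ from every face and vertex of the moment polytope.
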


In \cite{mirandahamilton}, Hamilton and Miranda proved the following key theorem for compact two-dimensional integrable systems with non-degenerate singularities.

\begin{thm}[Hamilton-Miranda, \cite{mirandahamilton}]
Let $(M, \omega, F)$ be a two-dimensional, compact, completely integrable system, whose moment map has only non-degenerate singularities. Suppose $M$ has a prequantum line bundle $\bbL$, and let $\cJ$ be the sheaf of sections of $\bbL$ which are flat along the leaves. The cohomology $H^1(M,\cJ)$ has two contributions of the form $\bbC^\bbN$ for each hyperbolic singularity, each one corresponding to a space of Taylor series in one complex variable. It also has one $\bbC$ term for each non-singular Bohr-Sommerfeld leaf. That is,
\begin{equation}\label{eq:mainthmn}
H^1 (M;\cJ) \cong \bigoplus_{p \in \mathcal{H}}
\bigl( \bbC^\bbN \oplus \bbC^\bbN\bigr)
\oplus \bigoplus_{b\in BS} \bbC_b.
\end{equation}
The cohomology in other degrees is zero. Thus, the quantization of $M$ is given by~\eqref{eq:mainthmn}.
\label{th:cohomology2dcompact}
\end{thm}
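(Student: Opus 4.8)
Since $\dim M=2$ we have $n=1$, and by Theorem~\ref{thm:locallinearization} every singular point is either elliptic (an isolated fixed point around which the Liouville foliation is by circles collapsing to the point) or hyperbolic (lying on a singular leaf which, in the Morse--Bott compact picture, is a figure-eight through the point); by compactness there are finitely many of each. The plan is to compute $H^\ast(M;\cJ)$ by a \v Cech argument with respect to a saturated cover $\cA$ of $M$ by three kinds of open sets: small saturated disks $D_e$ around the elliptic points, chosen so small that they contain no \BS leaf; small saturated neighborhoods $N_p$ of each hyperbolic singular leaf, chosen small enough to contain no non-singular \BS leaf (this is possible because the period of the leaves $\{f=c\}$ grows only logarithmically as $c\to 0$, so $c\cdot T(c)\to 0$ and the \BS condition is violated for all small $c\neq 0$); and annular regions $A_j\cong\bbT^1\times(a_j,b_j)$ built out of regular leaves, arranged so that the $A_j$ together with the $D_e$ and $N_p$ cover $M$ and every nonempty overlap is a regular annulus containing no \BS leaf. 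One then checks that $\cA$ is a Leray cover for $\cJ$ (each member and each overlap has $\cJ$-cohomology concentrated in a single degree), so $H^\ast_{\cA}(M;\cJ)=H^\ast(M;\cJ)$, and assembles the local answers by iterated Mayer--Vietoris.

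\textbf{The easy blocks.} On a regular annulus, action--angle coordinates (Theorem~\ref{thm:ALM}) identify it symplectically with an open piece of $T^\ast\bbT^1$ carrying the Liouville form as connection $1$-form, so \'Sniatycki's Theorem~\ref{th:sniat} --- equivalently the explicit cylinder computation of Appendix~\ref{sec:cohomcyl} --- gives $H^q(A_j;\cJ)=0$ for $q\neq 1$ and $H^1(A_j;\cJ)\cong\bigoplus_{b\in BS\cap A_j}\bbC_b$. In particular every nonempty overlap of $\cA$, being a regular annulus containing no \BS leaf, has vanishing $\cJ$-cohomology in all degrees. On an elliptic disk $D_e$, the holonomy of $\nabla$ around the shrinking circles is nontrivial for all but a discrete set of radii, none of which lie in $D_e$ by our choice, so no leaf carries a nonzero leaf-wise flat section and $H^\ast(D_e;\cJ)=0$; this is exactly Hamilton's local computation underlying Theorem~\ref{thm:hamlocaltor}. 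Hence elliptic singularities contribute nothing beyond the regular \BS leaves already counted in the annuli.

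\textbf{The hyperbolic block --- the heart of the argument.} On $N_p$ I would put $f=xy$, $\omega=dx\wedge dy$ in Eliasson normal form, so $X_f=-x\,\frbd{x}+y\,\frbd{y}$ with flow $(x,y)\mapsto(e^{-t}x,e^{t}y)$, and work in the gauge $\nabla=d-i\lambda$ with $\lambda=x\,dy$, so that $\langle\lambda,X_f\rangle=xy=f$ is constant along each leaf. The flatness equation $\nabla_{X_f}\sigma=0$ becomes $d\sigma/dt=if\,\sigma$ along the leaf $\{f=c\}$, with solution $\sigma(t)=\sigma_0e^{ict}$; the leaf $\{f=c\}$ closes up after a period $T(c)$ blowing up logarithmically as $c\to 0$, and the holonomy around the vanishing cycle is $e^{icT(c)}$, which is nontrivial, so $H^0(N_p;\cJ)=0$. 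For $H^1$ one runs an explicit \v Cech computation on a small cover of $N_p$: via the cylinder cohomology of Appendix~\ref{sec:cohomcyl} the problem reduces to deciding which leaf-wise flat sections on the punctured neighborhood of the singular fiber extend smoothly across it, and the cokernel of the relevant difference map is identified, by a Borel/Mittag-Leffler argument, with the space of formal Taylor series in one complex variable. Since the node has two local sheets (the sides $\{xy>0\}$ and $\{xy<0\}$ each carry their own obstruction), this yields $H^1(N_p;\cJ)\cong\bbC^\bbN\oplus\bbC^\bbN$, with all other degrees zero.

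\textbf{Assembly and the main obstacle.} Feeding these local computations into the Mayer--Vietoris spectral sequence for $\cA$: all members have $\cJ$-cohomology concentrated in degree $1$ and every nonempty overlap has vanishing $\cJ$-cohomology, so the spectral sequence degenerates, $H^k(M;\cJ)=0$ for $k\neq 1$, and $H^1(M;\cJ)$ is the direct sum of the local contributions --- $\bbC^\bbN\oplus\bbC^\bbN$ for each hyperbolic point from the $N_p$, $\bbC_b$ for each non-singular \BS leaf from the annuli $A_j$, nothing from the $D_e$ or the overlaps --- which is exactly~\eqref{eq:mainthmn}; since the quantization is $\bigoplus_k H^k(M;\cJ)$ this also gives the last sentence of the statement. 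The regular and elliptic blocks are routine given Theorems~\ref{th:sniat} and~\ref{thm:hamlocaltor} and the appendix; the real difficulty, and the origin of the infinite-dimensional summands, is the hyperbolic model, where one must pin down precisely the asymptotic behaviour of leaf-wise flat sections near the singular fiber and prove that the obstruction to extending them smoothly is measured exactly by formal power series --- no coarser and no finer --- which is where all the analytic care is concentrated.
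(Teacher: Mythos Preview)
The paper does not prove this theorem: it is stated as background and attributed to Hamilton--Miranda \cite{mirandahamilton} with no argument given, so there is no in-paper proof to compare your proposal against.

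As an outline of the Hamilton--Miranda argument your sketch is broadly on target --- saturated cover, local computations for regular/elliptic/hyperbolic pieces, and assembly by a Mayer--Vietoris spectral sequence with acyclic overlaps --- and you correctly identify the hyperbolic block as the place where all the work lies. Two small points. First, your cover is not Leray in the usual sense (the $N_p$ have $H^1\neq 0$); what you are actually using is that all \emph{overlaps} are $\cJ$-acyclic, so the \v Cech--to--sheaf spectral sequence has $E_1^{p,q}=0$ for $p\geq 1$ and collapses --- this is fine, but the word ``Leray'' is misleading. Second, the two $\bbC^{\bbN}$ summands are not naturally indexed by the quadrants $\{xy>0\}$ and $\{xy<0\}$ but rather by the two branches (the two loops of the figure-eight, i.e.\ the two separatrices meeting at the saddle); the obstruction space is the cokernel of the map sending a smooth flat section near the node to its formal Taylor expansion along each branch, and it is the pair of branches that gives the pair of formal series spaces. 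Beyond that, your honest acknowledgement that the hyperbolic $H^1$ computation is the real content is accurate: the asymptotic/Borel argument you allude to is genuinely delicate and is carried out in full in \cite{mirandahamilton}, which is why the present paper simply cites it.
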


Two more important theorems were proved by Miranda, Presas and Solha in \cite{mirandapresassolha}, concerning the geometric quantization of focus-focus fibers.
\begin{thm}[Miranda-Presas-Solha, \cite{mirandapresassolha}]
The geometric quantization of a saturated neighborhood of a focus-focus fiber with $n$ nodes is:
\begin{itemize}
\item $0$ if the singular fiber is not Bohr--Sommerfeld.
\item isomorphic to
\begin{equation*}
\left(C^\infty(\bbR;\bbC)\right )^{n_f},
\end{equation*}if the singular fiber is Bohr--Sommerfeld, where $n_f=n$ (for compact fibers) and $n_f=n-1$ otherwise.
\end{itemize}
\label{th:ffquant}
\end{thm}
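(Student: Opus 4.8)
The plan is to pass to the Eliasson--Miranda--Zung normal form, compute the sheaf cohomology of the resulting local model by a \v{C}ech/Mayer--Vietoris argument adapted to the ``necklace'' topology of the focus-focus fiber, and read off the dichotomy from the holonomy of the prequantum connection along the cycle that survives on the singular fiber. Concretely, I would first use Theorem~\ref{thm:locallinearization} together with Theorems~\ref{thm:directproduct} and~\ref{th:MirandaZung} to replace the saturated neighbourhood $U$ of the focus-focus fiber $\mathcal F_0$ by a standard model, in which near each node the moment map is $(f_1,f_2)=(x_1y_2-x_2y_1,\ x_1y_1+x_2y_2)$ and the $S^1$-action generated by $f_1$ is globally defined on $U$. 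In the complex coordinates for which $f_1+if_2=z_1\bar z_2$, the neighbourhood $U$ fibers over a $2$-disk $D$ with the focus-focus value at the origin; the regular fibers are Lagrangian $2$-tori, and $\mathcal F_0$ is a cyclic chain of $n$ spheres glued along vanishing cycles in the compact case, or a linear chain with $n-1$ interior nodes in the non-compact case. I would also record the classical fact about focus-focus fibrations that the action $p_1$ conjugate to the $S^1$-angle extends to a smooth function on $D$ with non-vanishing differential, so that $p_1$ is almost constant on the small set $U$.

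Second, the Bohr--Sommerfeld analysis. The holonomy of $\nabla$ around a loop in a leaf is trivial precisely when the corresponding period of the connection $1$-form lies in $2\pi\bbZ$, and on $\mathcal F_0$ the only surviving primitive cycle is the $S^1$-orbit, whose period is a normalisation of $p_1$. Hence if $p_1(\mathcal F_0)\notin\bbZ$ then, by near-constancy of $p_1$, no leaf in $U$ --- neither $\mathcal F_0$ nor any nearby torus --- is a \BS leaf; taking a cover of $U$ by leaf-wise contractible ``flow boxes'' and running the \v{C}ech computation then forces $H^*(U;\cJ)=0$, which is the first bullet. If instead $p_1(\mathcal F_0)\in\bbZ$, then $\mathcal F_0$ is a \BS leaf and the computation below produces the function-space answer.

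Third, the local contribution at a node and the global assembly. Near a single node the level set $\{z_1\bar z_2=\text{const}\}$ behaves like a ``complexified'' hyperbolic block: the angular $S^1$-factor re-imposes exactly the integrality condition $p_1\in\bbZ$, while the radial coordinate $\log|z_1\bar z_2|$ runs to $-\infty$ at the node, and the equation $\nabla_X\sigma=0$ along this radial direction is of the same type that, in the two-dimensional hyperbolic case of Theorem~\ref{th:cohomology2dcompact}, yields a one-parameter family of leaf-wise flat sections. Solving it through the node --- and dealing with the fact that $\mathcal F_0$ fails to be a manifold there --- I expect the space of leaf-wise flat sections on a neighbourhood of a single node, with the $S^1$-constraint imposed, to be an honest $C^\infty(\bbR;\bbC)$ rather than the formal Taylor series of the purely hyperbolic situation. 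I would then cover $U$ by one such node-neighbourhood for each node together with the intervening regular blocks $\bbT^2\times I$, whose sheaf cohomology is the one worked out in Appendix~\ref{sec:cohomcyl}, and run the Mayer--Vietoris / \v{C}ech spectral sequence: the regular blocks contribute nothing new, each node contributes a copy of $C^\infty(\bbR;\bbC)$, and the combinatorics of a closed necklace of $n$ nodes versus an open chain of $n-1$ interior nodes leaves $n$ surviving copies in the compact case and $n-1$ in the non-compact case, with the cohomology in the remaining degrees vanishing.

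The main obstacle, as always for hyperbolic-type singularities, is the local node computation: one must solve the leaf-wise flatness equation across a point where the singular fiber is not a smooth manifold and show that the solution space is \emph{exactly} $C^\infty(\bbR;\bbC)$ --- neither larger, so that the genuinely infinite-dimensional discrepancies of the earlier models do not reappear, nor collapsed to a formal germ. A secondary difficulty is controlling the \v{C}ech differentials in the assembly, including the effect of the focus-focus monodromy that shears the period lattice around $\mathcal F_0$, so that the chain/necklace bookkeeping cleanly yields $n$ versus $n-1$ and no spurious classes survive in the other \v{C}ech degrees.
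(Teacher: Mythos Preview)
This theorem is not proved in the present paper: it is quoted from \cite{mirandapresassolha} as background (alongside Theorem~\ref{th:GQalmosttoric}), and no argument for it is given here. So there is no ``paper's own proof'' to compare against; the only relevant comparison is with the original source.

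That said, your outline is essentially the strategy of \cite{mirandapresassolha}: pass to the normal form, identify the Bohr--Sommerfeld condition with integrality of the $S^1$-action variable, cover the saturated neighbourhood by node charts together with regular toric pieces, and assemble via a Mayer--Vietoris/\v{C}ech argument. A few points would need tightening before this becomes a proof rather than a plan. First, the quantization is a \emph{cohomology} computation, concentrated in degree~$2$ for a $4$-manifold; your sentence ``the space of leaf-wise flat sections on a neighbourhood of a single node\dots to be an honest $C^\infty(\bbR;\bbC)$'' blurs $H^0$ and $H^2$. The copies of $C^\infty(\bbR;\bbC)$ appear in $H^2$ of the node charts, not as global sections. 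Second, your description of the non-compact case is off: a fiber with $n$ nodes still has $n$ nodes whether or not it is compact, and the drop from $n$ to $n_f=n-1$ comes from the Mayer--Vietoris combinatorics of an open chain versus a closed necklace, not from there being fewer singular points. Third --- and you flag this yourself --- the heart of the matter is the local node computation: you need to explain precisely why the presence of the global $S^1$-action upgrades the formal $\bbC^{\bbN}$ of Proposition~\ref{prop:flatat0} to the smooth $C^\infty(\bbR;\bbC)$. In \cite{mirandapresassolha} and \cite{solha} this is where the analytic work lies, and your sketch does not yet supply it.
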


\begin{thm}[Miranda-Presas-Solha, \cite{mirandapresassolha}]
For a $4$-dimensional closed almost toric manifold $M$, with $n_r$ regular Bohr--Sommerfeld fibers and $n_f$ focus-focus Bohr--Sommerfeld fibers:
\begin{equation*}
\mathcal{Q}(M)\cong\bbC^{n_r}\oplus\left(\bigoplus_{j\in \{1,\dots,n_f\}} (C^\infty(\bbR ;\bbC))^{n(j)}\right),
\end{equation*}with $n(j)$ the number of nodes on the $j$-th focus-focus Bohr--Sommerfeld fiber.
\label{th:GQalmosttoric}
\end{thm}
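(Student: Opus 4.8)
The plan is to localise the sheaf cohomology $H^{*}(M;\cJ)$ at the Bohr--Sommerfeld fibres and then read off the global answer from the known local models by a Mayer--Vietoris argument. Since $M$ is $4$-dimensional and almost toric, its singular fibres are only of elliptic and focus--focus type (an almost toric manifold has no hyperbolic blocks), and its Bohr--Sommerfeld fibres are exactly the $n_{r}$ regular Lagrangian tori lying over interior integral points of the singular integral affine base, together with the $n_{f}$ focus--focus fibres satisfying the Bohr--Sommerfeld condition; elliptic fibres sit on the boundary of the affine structure and are never Bohr--Sommerfeld. I would pick pairwise disjoint \emph{saturated} open neighbourhoods $W_{1},\dots,W_{n_{r}}$ of the regular Bohr--Sommerfeld tori $\ell_{1},\dots,\ell_{n_{r}}$ and $W_{1}',\dots,W_{n_{f}}'$ of the focus--focus Bohr--Sommerfeld fibres $\ell_{1}',\dots,\ell_{n_{f}}'$, small enough that the punctured neighbourhoods $W_{i}\setminus\ell_{i}$ and $W_{j}'\setminus\ell_{j}'$ contain no Bohr--Sommerfeld fibre. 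Setting $N=\bigl(\bigsqcup_{i}W_{i}\bigr)\sqcup\bigl(\bigsqcup_{j}W_{j}'\bigr)$ and $U=M\setminus\bigl(\bigcup_{i}\ell_{i}\cup\bigcup_{j}\ell_{j}'\bigr)$, one gets an open cover $M=N\cup U$ with $N\cap U=\bigl(\bigsqcup_{i}(W_{i}\setminus\ell_{i})\bigr)\sqcup\bigl(\bigsqcup_{j}(W_{j}'\setminus\ell_{j}')\bigr)$.

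First I would compute the local models. By \'Sniatycki's Theorem~\ref{th:sniat}, $H^{q}(W_{i};\cJ)=0$ for $q\neq 2$ and $H^{2}(W_{i};\cJ)\cong\bbC$; by the saturated-neighbourhood statement of Theorem~\ref{th:ffquant}, $H^{q}(W_{j}';\cJ)=0$ for $q\neq 2$ and $H^{2}(W_{j}';\cJ)\cong\bigl(C^{\infty}(\bbR;\bbC)\bigr)^{n(j)}$, where $n(j)$ is the number of nodes of $\ell_{j}'$ and the exponent is $n(j)$ (not $n(j)-1$) because these fibres are compact. Since $N$ is a disjoint union,
\begin{equation*}
H^{*}(N;\cJ)=\bigoplus_{i=1}^{n_{r}}H^{*}(W_{i};\cJ)\ \oplus\ \bigoplus_{j=1}^{n_{f}}H^{*}(W_{j}';\cJ),
\end{equation*}
which is concentrated in degree $2$ and is precisely the claimed right-hand side.

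Next I would show that the complement is $\cJ$-acyclic. Both $U$ and $N\cap U$ are foliated entirely by fibres that are not Bohr--Sommerfeld, and over such an open set the sheaf $\cJ$ has vanishing cohomology in every degree: away from the singular fibres this is the case of Theorem~\ref{th:sniat} with no Bohr--Sommerfeld leaf; on a punctured saturated neighbourhood of a non-Bohr--Sommerfeld elliptic fibre it follows from Hamilton's Theorem~\ref{thm:hamlocaltor}; and on a (possibly punctured) saturated neighbourhood of a non-Bohr--Sommerfeld focus--focus fibre it is the vanishing alternative of Theorem~\ref{th:ffquant}. A Mayer--Vietoris induction over a locally finite saturated cover of $U$ (respectively of $N\cap U$) by such pieces, in which every member and every overlap is $\cJ$-acyclic, then yields $H^{*}(U;\cJ)=H^{*}(N\cap U;\cJ)=0$. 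Inserting this into the Mayer--Vietoris sequence
\begin{equation*}
\cdots\longrightarrow H^{k}(M;\cJ)\longrightarrow H^{k}(N;\cJ)\oplus H^{k}(U;\cJ)\longrightarrow H^{k}(N\cap U;\cJ)\longrightarrow\cdots
\end{equation*}
makes every $U$- and $N\cap U$-term vanish, so $H^{k}(M;\cJ)\cong H^{k}(N;\cJ)$ for all $k$; hence $H^{k}(M;\cJ)=0$ for $k\neq 2$ and
\begin{equation*}
\mathcal{Q}(M)=H^{2}(M;\cJ)\cong\bbC^{n_{r}}\oplus\Bigl(\bigoplus_{j=1}^{n_{f}}\bigl(C^{\infty}(\bbR;\bbC)\bigr)^{n(j)}\Bigr),
\end{equation*}
the asserted formula.

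The hard part will be the acyclicity step, which is not a formal consequence of \'Sniatycki's theorem: near the elliptic and focus--focus fibres one needs the full semilocal description of leaf-wise flat sections — holonomy computations along the vanishing cycles and, in the focus--focus case, the K\"unneth-type decomposition underlying Theorem~\ref{th:ffquant} together with the finite-dimensionality hypothesis of the sheaf K\"unneth formula of \cite{MirandaPresas15} (met here by the regular factor) — in order to conclude that a \emph{punctured} saturated neighbourhood containing no Bohr--Sommerfeld fibre is $\cJ$-acyclic. A secondary point to verify is that the infinite-dimensional summands of $H^{2}(M;\cJ)$ cause no trouble in the gluing: they sit in the top degree $n=2$, so the Mayer--Vietoris connecting maps issuing from them land in $H^{3}(N\cap U;\cJ)=0$, and the same observation makes the \v{C}ech-to-derived-functor spectral sequence of the gluing cover degenerate.
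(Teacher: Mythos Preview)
The paper does not actually prove this theorem here: it is quoted as a background result from \cite{mirandapresassolha}, with no argument given, so there is no in-paper proof to compare against. Your Mayer--Vietoris localisation at the Bohr--Sommerfeld fibres is nonetheless exactly the method the paper points to when it later globalises its own focus--focus computation (``using again the Mayer--Vietoris formula presented in \cite{MirandaPresas15}''), and it is the approach of the cited references: isolate each Bohr--Sommerfeld fibre in a small saturated neighbourhood, read off $H^{2}\cong\bbC$ from Theorem~\ref{th:sniat} in the regular case and $H^{2}\cong(C^{\infty}(\bbR;\bbC))^{n(j)}$ from Theorem~\ref{th:ffquant} in the focus--focus case, and then kill the complement.

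Your outline is correct and you have flagged the genuine content, namely the acyclicity of $U$ and of $N\cap U$. Two small points. First, your inductive Mayer--Vietoris over a saturated cover of $U$ needs the \emph{overlaps} of the cover to be acyclic too; but those overlaps are again saturated open sets without Bohr--Sommerfeld leaves, so the induction closes once you have the base case. Second, be careful citing Theorem~\ref{thm:hamlocaltor} for a neighbourhood of an elliptic fibre: as stated it assumes $M$ compact, whereas your pieces near the boundary strata are open. What you actually need there is the local/semilocal computation underlying Hamilton's theorem --- the cylinder model of Section~\ref{sec:sheafcyl} and Appendix~\ref{sec:cohomcyl}, combined with the K\"unneth formula of \cite{MirandaPresas15} --- which does give $H^{*}(\,\cdot\,;\cJ)=0$ on any locally toric saturated open set containing no Bohr--Sommerfeld leaf. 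With that substitution your argument goes through.
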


And the expression of the flat sections on a focus-focus singular leaf is known (see \cite{solha}). These results allow to compute the quantization of some particular systems such as $K3$ surfaces \cite{mirandapresassolha}.

Although the geometric quantization of the regular case is quite clear and for specific systems with non-degenerate singularities of elliptic, hyperbolic and focus-focus type there do exist some results, a unified model is lacking and a specific proposal for it is what we introduce in this article.

\subsection{A useful example. The sheaf cohomology in the cylinder}
\label{sec:sheafcyl}
To illustrate the sheaf cohomology in a simple manifold, let us consider the cylinder $\bbR \times S^1$ and compute its sheaf cohomology. Set $M=\bbR\times S^1$, with polar coordinates $(t,\theta)$ and symplectic form $\omega = dt \wedge d\theta$.

We consider the real polarization of the symplectic manifold $M$ given by vectors tangent to the $S^1$ directions, i.e., the polarization $P$ is spanned by $\frbd{\theta}$ at each point of $M$.

Define the complex line bundle as $\bbL = M \times \bbC$. A section $\sigma$ of $\bbL$ can be seen as a complex-valued function on $M$. It is clear that $\omega = dt \wedge d\theta = d(t\,d\theta)$, so the connection defined by $$\nabla_X \sigma =X(\sigma) - \sigma i t\, d\theta(X),$$
with potential $1$-form $t\,d\theta$, will have curvature $\omega$ and makes $\bbL$ into a prequantization line bundle.

The leaf-wise sections have to satisfy $\sigma$ of $\bbL$ satisfy $$\nabla_X \sigma = X(\sigma) - \sigma \, i t \,d\theta(X) = 0$$ for any $X \in P$ or, equivalently,
$$\frbdt{\sigma}{\theta}-\sigma i t=0.$$

This partial differential is solved by $$\sigma(t,\theta)=a(t)e^{it\theta},$$
for any smooth function $a(t)$. These are the flatwise sections from which we can compute the sheaf cohomology $\cJ$ of $\bbL$.

We are looking for the set of \BS leaves, which are the leaves of the polarization $P$ that possess a non-trivial global leaf-wise flat section. The leaves of $P$ are of the form $\{t_0\}\times S^1$. Then, we want to see for which values of $t_0$ and $\theta$ the section $\sigma(t_0,\theta)=a(t_0)e^{it_0\theta}$, for $t_0$ fixed, is a well defined global section of the polarization.

Since $t_0$ is fixed along a leaf, $a(t_0)=a$ and $\sigma(t_0,\theta)=ae^{it_0\theta}$. The section $\sigma(t_0,\theta)$ has to be well defined in all the section, implying that $\sigma(t_0,\theta)=\sigma(t_0,\theta+2\pi)$. Therefore, $t_0$ has to satisfy $1=e^{2\pi it_0}$ and we get the condition $t_0\in\bbZ$. Then, the Bohr-Sommerfeld leaves are the leaves of $P$ of the form $\{k\}\times S^1$ with $m \in \bbZ$.

The space of global covariant constant sections over one leaf is one-dimensional, i.e., there is freedom in the choice of the value of $a\in\bbC$ in $\sigma=ae^{it_0\theta}$.

To compute the sheaf cohomology of $M$ we can determine the cohomology groups $H^k$ applying directly \'Sniatycki Theorem. In this case, we obtain that, for any open interval $I\subset\bbR$ and $U = I\times S^1 \subset M$,
$$
H^1(U,\cJ) \cong \bigoplus_{m \in \bbZ\cap I} \bbC, 
\qquad H^k(U,\cJ) = 0,\; k\neq 1,
$$
where $\cJ$ is the sheaf of flat sections $\sigma$.

But we can also compute the sheaf cohomology of the cylinder explicitly, as we do in Appendix \ref{sec:cohomcyl}.

\section{Integrable systems as cotangent lifts and the discrete cotangent lift}
\label{sec:cotangentmodels}

Cotangent lifts arise naturally in physics problems, and the link between integrable systems and cotangent models is clear in view of the following Kiesenhofer and Miranda result \cite{KiesenhoferMiranda}, which restates Theorem~\ref{thm:ALM} to reveal that at a semi-local level the regular leaves are equivalent to a completely toric cotangent lift model.

\begin{thm}[Kiesenhofer-Miranda, \cite{KiesenhoferMiranda}]
Let $F=(f_1,\dots,f_n)$ be an integrable system on a symplectic manifold $(M,\omega)$. Then, semi-locally around a regular Liouville torus, the system is equivalent to the cotangent model $(T^* \bbT^n)_{can}$ restricted to a neighbourhood of the zero section $(T^* \bbT^n)_0$ of $T^* \bbT^n$.
\label{thm:cotangentliftALM}
\end{thm}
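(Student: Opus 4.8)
The plan is to derive this directly from the Arnold-Liouville-Mineur theorem (Theorem~\ref{thm:ALM}) by exhibiting an explicit symplectomorphism that identifies the action-angle chart with a neighbourhood of the zero section of $T^*\bbT^n$, and then checking that this identification is a morphism of integrable systems. First I would invoke Theorem~\ref{thm:ALM} to obtain, on a saturated neighbourhood $U$ of the regular Liouville torus $\cF_m$, action coordinates $(p_1,\dots,p_n)$ taking values in a ball $B^n\subset\bbR^n$ and angle coordinates $(\theta_1,\dots,\theta_n)$ valued in $\bbT^n=\bbR^n/\bbZ^n$, with $\omega=\sum_i d\theta_i\wedge dp_i$ and with $F$ factoring through the projection onto the $p$-coordinates. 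After a translation in the action coordinates we may assume the torus $\cF_m$ corresponds to $\{p=0\}$, so $(p,\theta)$ ranges over $B^n\times\bbT^n$ with $0\in B^n$.

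Next I would recall the canonical model: on $T^*\bbT^n$, writing base coordinates $\theta_i\in\bbR/\bbZ$ and fibre coordinates $\xi_i$, the Liouville $1$-form is $\lambda=\sum_i \xi_i\,d\theta_i$ and $\omega_{can}=d\lambda=\sum_i d\xi_i\wedge d\theta_i$. The zero section $(T^*\bbT^n)_0$ is $\{\xi=0\}$, and the map $(q,p)\mapsto p$ (fibre projection) is the standard completely integrable system with the zero section as a regular fibre. The map
\begin{equation*}
\Phi\colon U\longrightarrow T^*\bbT^n,\qquad (\theta,p)\longmapsto (\theta,-p)
\end{equation*}
is then a diffeomorphism onto the neighbourhood $B^n\times\bbT^n$ of the zero section (the sign is just to match $d\theta\wedge dp$ with $d\xi\wedge d\theta$; one could equally absorb it into an orientation choice). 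One checks $\Phi^*\omega_{can}=\Phi^*\big(\sum_i d\xi_i\wedge d\theta_i\big)=\sum_i d(-p_i)\wedge d\theta_i=\sum_i d\theta_i\wedge dp_i=\omega$, so $\Phi$ is a symplectomorphism, and it carries $\cF_m$ to the zero section. Finally, since $F$ depends only on $p$ by item (3) of Theorem~\ref{thm:ALM}, $F=G\circ\mathrm{pr}_p$ for some local diffeomorphism-germ $G$, and $\mathrm{pr}_p=-\mathrm{pr}_\xi\circ\Phi$; hence $F$ and the canonical momentum map of $(T^*\bbT^n)_{can}$ differ by the diffeomorphism $G$ of the base, i.e.\ they define the same Lagrangian (Liouville) foliation. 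This is exactly the equivalence of integrable systems claimed.

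The only genuine subtlety — and the point I would state carefully rather than gloss over — is the meaning of \emph{equivalence of integrable systems} here: two momentum maps that differ by a diffeomorphism of the target $\bbR^n$ have the same fibres and hence the same associated real polarization, which is all that matters for the subsequent geometric-quantization constructions; they need not be literally equal as maps to $\bbR^n$. One should also note the standard point that the action coordinates $p_i$ are canonically determined only up to an affine transformation with integer linear part (a change of basis of $H_1$ of the torus plus a shift), and the identification with $T^*\bbT^n$ depends on such a choice; any two choices are related by a fibrewise-linear symplectomorphism of $T^*\bbT^n$ covering an automorphism of $\bbT^n$, so the cotangent model is well-defined up to this natural ambiguity. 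Beyond this bookkeeping the argument is a direct unwinding of Theorem~\ref{thm:ALM}, so I do not expect a hard obstacle; the work is entirely in setting up the dictionary between the action-angle picture and the Liouville $1$-form on $T^*\bbT^n$ so that it extends cleanly to the singular cotangent models treated later.
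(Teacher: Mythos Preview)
Your argument is correct. Note, however, that the paper does not give its own proof of this statement: it is quoted as a result of Kiesenhofer--Miranda and attributed to the reference, so there is no in-paper proof to compare against. What you wrote is precisely the intended derivation---the theorem is explicitly introduced in the paper as a restatement of the Arnold--Liouville--Mineur theorem (Theorem~\ref{thm:ALM}) in cotangent language, and your identification $(\theta,p)\mapsto(\theta,-p)$ together with the observation that $F$ factors through the action variables is exactly that restatement. Your remarks on the meaning of ``equivalence of integrable systems'' (same Lagrangian foliation, not literal equality of moment maps) and on the $GL(n,\bbZ)\ltimes\bbR^n$ ambiguity of action coordinates are the right caveats and match how the result is used downstream.
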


In the classical models of the harmonic oscillator, the simple pendulum and the spherical pendulum one already finds the three different types of non-degenerate singularities in its lowest dimensional case. A simple elliptic singularity is appears in the harmonic oscillator, a simple hyperbolic singularity shows up in the simple pendulum and a simple focus-focus singularity arises in the spherical pendulum. The Hamiltonian vector fields associated each of the three non-degenerate singularities can be obtained from the cotangent lift of a Lie group action.

\begin{defn}
We denote by $\rho^h$ the following Lie group action:
$$\begin{array}{rccc}
  \rho^h:& \bbR\times\bbR &\longrightarrow &\bbR\\
  &\left(t,x\right) &\longmapsto &e^{-t} x
\end{array},$$
by $\rho^e$ the following Lie group action:
$$\begin{array}{rccc}
  \rho^e:& \bbR\times\bbC &\longrightarrow &\bbC\\
  &\left(t,z\right) &\longmapsto &e^{it} z
\end{array},$$
and by $\rho^f$ the following Lie group action:
$$\begin{array}{rcccl}
  \rho^f : &(S^1\times\mathbb{R})\times\mathbb{R}^2 &\longrightarrow & \mathbb{R}^2 &\\
  &\left((\theta,t),\begin{pmatrix}
            x_1\\
            x_2
            \end{pmatrix}\right)
            & \longmapsto &
            \begin{pmatrix}
            e^{-t} & 0\\
            0 & e^{-t}
            \end{pmatrix}
            \begin{pmatrix}
            \cos{\theta} & \sin{\theta}\\
            -\sin{\theta} & \cos{\theta}
            \end{pmatrix}
            \begin{pmatrix}
            x_1\\
            x_2
            \end{pmatrix}
\end{array}.$$
\label{def:cotliftactions}
\end{defn}

\begin{lem}
The infinitesimal generators of the cotangent lift of the Lie group actions $\rho^h,\rho^e,\rho^f$ coincide, respectively, with the vector fields corresponding to the hyperbolic, elliptic and focus-focus singularities.
\label{lem:cotangentmodels}
\end{lem}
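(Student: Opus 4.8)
The plan is to compute, for each of the three actions $\rho^h$, $\rho^e$, $\rho^f$, the cotangent lift explicitly in canonical Darboux coordinates on the relevant cotangent bundle, read off the generating Hamiltonian (momentum map) of the lifted action, and then check that the associated Hamiltonian vector field with respect to the canonical symplectic form $\omega = d\lambda$ matches the Williamson normal form from Theorem~\ref{thm:WilliamsonCartanSubalgebra} (equivalently, the blocks $f_h = xy$, $f_e = x^2+y^2$, $f_f = (x_1y_2 - x_2 y_1,\ x_1y_1 + x_2y_2)$ of Theorem~\ref{thm:directproduct}). Since the cotangent lift of any diffeomorphism preserves $\lambda$ (hence $\omega$), each lifted action is automatically Hamiltonian, so a momentum map exists and can be computed by the standard formula $\mu^X(q,p) = \langle \lambda_{(q,p)}, X^{\#}\rangle = \langle p, d\pi(X^{\#})\rangle$, i.e. the momentum map is $\langle p, Y\rangle$ where $Y$ is the infinitesimal generator of the base action.

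First I would treat the hyperbolic case. The base action is $\rho^h_t(x) = e^{-t}x$ on $\bbR$, whose infinitesimal generator is the vector field $-x\,\partial_x$. Following the cotangent-lift formula as in Example~\ref{examp:cotangentlifttranslation}, $\hat\rho^h_t(x,y) = (e^{-t}x, e^{t}y)$ on $T^*\bbR \cong \bbR^2$ with $\lambda = y\,dx$, $\omega = dy\wedge dx$. The momentum map is then $\mu^h(x,y) = \langle y, -x\rangle = -xy$ (up to sign/orientation conventions this is $f_h = xy$), and a direct check gives that the generator of the lifted flow is $-x\,\partial_x + y\,\partial_y = X_{f_h}$ for the appropriate sign convention in Definition~\ref{def:Hamiltonianvfsymplectic}. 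Next, for the elliptic case, identify $\bbC \cong \bbR^2$ via $z = x + iy$ so that $\rho^e_t$ is rotation by $t$, with infinitesimal generator $-y\,\partial_x + x\,\partial_y$; lifting to $T^*\bbR^2$ and computing the momentum map gives (a constant multiple of) $x^2 + y^2 = f_e$, whose Hamiltonian vector field is again the generator of the lifted rotation. For the focus-focus case, the base $\bbR^2$-action $\rho^f$ combines a rotation (the $S^1$-factor) and a radial dilation (the $\bbR$-factor); the two commuting infinitesimal generators lift to commuting Hamiltonian vector fields on $T^*\bbR^2 \cong \bbR^4$, and the pair of momentum-map components works out to $(x_1 y_2 - x_2 y_1,\ x_1 y_1 + x_2 y_2) = f_f$ after the identification of cotangent coordinates with the Williamson coordinates. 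In each case one then verifies $\iota_{X^{\#}}\omega = -d\mu$ by a one-line computation.

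The main technical point — and the place where care is needed rather than genuine difficulty — is bookkeeping the identification between the cotangent-bundle Darboux coordinates $(q,p)$ coming from the lift and the Williamson $(x,y)$ coordinates of Theorem~\ref{thm:WilliamsonCartanSubalgebra}, together with the sign and scaling conventions: the lift naturally produces $\omega = \sum dp_i \wedge dq_i$ while Theorem~\ref{thm:directproduct} writes $\omega = dx\wedge dy$, and the sign convention $\iota_{X_H}\omega = -dH$ must be tracked so that ``coincide'' means equality of vector fields rather than equality up to sign. I expect the focus-focus computation to be the most laborious, since it is genuinely $4$-dimensional and involves the interplay of the rotational and dilational generators; the map from $(x_1,x_2)$ and their conjugate momenta to the four Williamson coordinates should be taken to be the natural one (cotangent coordinates on $\bbR^2$) and then one checks that the two quadratic momentum-map components are exactly $f_1,f_2$. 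Once these identifications are fixed, all three verifications reduce to routine computation of infinitesimal generators and contraction with $\omega$, which is why this is stated as a lemma.
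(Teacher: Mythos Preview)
Your hyperbolic and focus-focus arguments are essentially the paper's proof phrased through the momentum map rather than by directly differentiating the lifted flow; either route works and the computations are the same ones. The paper simply writes down $\hat\rho^h_t(x,y)=(e^{-t}x,e^{t}y)$ and $\hat\rho^f_{(\theta,t)}$, differentiates in $t$ (and $\theta$) at the identity, and matches the resulting vectors against the Hamiltonian vector fields of $f_h$ and $(f_1,f_2)$ computed from $\omega$.

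The elliptic case, however, has a genuine gap in your proposal. You identify $\bbC\cong\bbR^2$, take the rotation generator $-y\,\partial_x+x\,\partial_y$, and then ``lift to $T^*\bbR^2$'' claiming the momentum map is a multiple of $x^2+y^2$. But $T^*\bbR^2$ is $4$-dimensional, while the elliptic Williamson block is $2$-dimensional; and the momentum map of the cotangent-lifted rotation on $T^*\bbR^2$ is the angular momentum $xp_y-yp_x$ (linear in the fibre variables, exactly as in the paper's Example~\ref{examp:cotangentliftrotation}), not $x^2+y^2$. So the step ``computing the momentum map gives (a constant multiple of) $x^2+y^2$'' is simply false for the honest cotangent lift you describe.

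The paper gets around this by a deliberately \emph{formal} construction: it works in complex conjugate coordinates $(z,\bar z)=(x+iy,x-iy)$ on the $2$-real-dimensional block, with $\omega=\tfrac{i}{2}\,dz\wedge d\bar z$, and treats $\bar z$ as the fibre coordinate conjugate to $z$. The ``cotangent lift'' of $z\mapsto e^{it}z$ is then declared to be $(z,\bar z)\mapsto(e^{it}z,e^{-it}\bar z)$, whose generator $(iz,-i\bar z)$ is the Hamiltonian vector field of $\tfrac12 z\bar z=\tfrac12(x^2+y^2)$. The paper flags this explicitly: ``The cotangent lift in the elliptic case uses a complex moment map which is not holomorphic. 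It is a formal development and holomorphicity is not assumed.'' This trick---pairing $z$ with $\bar z$ rather than with a genuinely independent momentum---is the missing idea in your elliptic argument, and without it the dimensions do not match and the momentum map you obtain is the wrong function.
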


\begin{proof}

% The hyperbolic

For the hyperbolic singularity, take coordinates $(x,y)$ on $\bbR^2$ such that the symplectic form is $\omega=dx\wedge dy$ and the moment map is $f=xy$. The Hamiltonian vector field associated to $f$ is $X=(-x,y)$.

Consider the action of $\bbR$ on $\bbR$ given by:
$$\begin{array}{rccc}
  \rho^h:& \bbR\times\bbR &\longrightarrow &\bbR\\
  &\left(t,x\right) &\longmapsto &e^{-t} x
\end{array}.$$

Then, $((d\rho^h_t)_x^*)^{-1}$ acts as $y \longmapsto e^{t}y$. The cotangent lift $\hat{\rho}^h_t$ associated to the group action $\rho^h_t$, in coordinates $(x,y)$ of $T^{*}\bbR$, is:
$$\begin{array}{rccc}
\hat{\rho}^h: & T^{*}\bbR &\longrightarrow & T^{*}\bbR\\
    & \begin{pmatrix}x\\y\end{pmatrix}&\longmapsto &
    \begin{pmatrix}e^{-t} x\\e^t y \end{pmatrix}
\end{array}.$$

Deriving the last vector with respect to $t$ and evaluating at $t=0$, we obtain $X=(-x,y)$, the vector field associated to the hyperbolic singularity.

% The elliptic

For the elliptic singularity, consider $\bbR^2$ with real coordinates $(x,y)$ and define the complex conjugate coordinates $(z,\bar{z})=(x+iy,x-iy)$ such that the symplectic form is $\omega=\frac{i}{2}dz\wedge d\bar{z}$. The moment map corresponding to the elliptic singularity is $f=\frac{1}{2}\left(x^2+y^2\right)=\frac{1}{2}z\bar{z}$. The Hamiltonian vector field associated to $f$ in the complex setting is $X=(iz,-i\bar{z})$.

Consider the following action of $\bbR$ on $\bbC$, which corresponds to a rotation of $z$ of angle $t$:

$$\begin{array}{rccc}
  \rho^e:& \bbR\times\bbC &\longrightarrow &\bbC\\
  &\left(t,z\right) &\longmapsto &e^{it} z
\end{array}.$$

Then, $((d\rho^e_t)_z^*)^{-1}$ acts as $\bar z \longmapsto e^{-it}\bar z$, and the cotangent lift $\hat{\rho}^e_t$ associated to the group action $\rho^e_t$, in coordinates $(z,\bar z)$ of $T^{*}\bbC$ is:
$$\begin{array}{rccc}
\hat{\rho}^e: & T^{*}\bbC &\longrightarrow & T^{*}\bbC\\
    & \begin{pmatrix}z\\ \bar z\end{pmatrix}&\longmapsto &
    \begin{pmatrix}e^{it} z\\e^{-it} \bar z \end{pmatrix}
\end{array}.$$

Deriving the last vector with respect to $t$ and evaluating at $t=0$ we obtain $X=(iz,-i\bar z)$, the vector field associated to the elliptic singularity.

The cotangent lift in the elliptic case uses a complex moment map which is not holomorphic. It is a formal development and holomorphicity is not assumed.

% The focus-focus

For the focus-focus singularity, take coordinates $(x_1,x_2,y_1,y_2)$ in $\bbR^4$ in such a way that the symplectic form is $\omega=dx_1\wedge dy_1+dx_2\wedge dy_2$ and the moment map is $F=(f_1,f_2)=(x_1y_2-x_2y_1,x_1y_1+x_2y_2)$.

The Hamiltonian vector fields associated to $f_1$ and $f_2$ are $X_1=(x_2,-x_1,y_2,-y_1),X_2=(-x_1,-x_2,y_1,y_2)$.

Let $G=S^1\times\mathbb{R}$ and $M=\mathbb{R}^2$. Consider the action of a rotation and a radial dilation of $\mathbb{R}^2$ given by
$$\begin{array}{rcccl}
  \rho^f : &(S^1\times\mathbb{R})\times\mathbb{R}^2 &\longrightarrow & \mathbb{R}^2 &\\
  &\left((\theta,t),\begin{pmatrix}
            x_1\\
            x_2
            \end{pmatrix}\right)
            & \longmapsto &
            \begin{pmatrix}
            e^{-t} & 0\\
            0 & e^{-t}
            \end{pmatrix}
            \begin{pmatrix}
            \cos{\theta} & \sin{\theta}\\
            -\sin{\theta} & \cos{\theta}
            \end{pmatrix}
            \begin{pmatrix}
            x_1\\
            x_2
            \end{pmatrix}
\end{array}.$$
Then, the cotangent lift $\hat{\rho}^f$ associated to the group action is:
$$\begin{array}{rccc}
\hat{\rho}^f :& T^{*}\mathbb{R}^2 &\longrightarrow & T^{*}\mathbb{R}^2\\
    &\begin{pmatrix}
            x_1\\
            x_2\\
            y_1\\
            y_2
            \end{pmatrix}           &\longmapsto&
            \begin{pmatrix}
            e^{-t}(x_1\cos{\theta}+x_2\sin{\theta})\\
            e^{-t}(-x_1\sin{\theta}+x_2\cos{\theta})\\
            e^t(y_1\cos{\theta}+y_2\sin{\theta})\\
            e^t(-y_1\sin{\theta}+y_2\cos{\theta})
            \end{pmatrix}
    \label{eq:diff3}
\end{array}.$$
Deriving the vector with respect to $\theta$ and evaluating at 0 we obtain $X_1=(x_2,-x_1,y_2,-y_1)$. While deriving the vector with respect to $t$ and evaluating at 0 we obtain $X_2=(-x_1,-x_2,y_1,y_2)$.
\end{proof}

In view of this realization of the three non-degenerate singularities as cotangent lifts of Lie group actions, we can automatically state a block form result, which is the cotangent analogous to Theorem~\ref{thm:directproduct}.

\begin{defn}
Let $p$ be a non-degenerate singularity of rank $k$ and Williamson type $(k_e,k_h,k_f)$ in a symplectic manifold $(M,\omega)$. Let $\rho^r,\rho^e,\rho^h,\rho^f$ be the Lie group actions of the regular case and the non-degenerate singular cases in Definition \ref{def:cotliftactions}. We define $\rho^p$ as the following composition of Lie group actions acting in the corresponding molecules:
$$\rho^p= (\rho^r:\bbR\times\bbT\to\bbR)^{k} \times (\rho^e:\bbR\times\bbC\to\bbC)^{k_e} \times (\rho^h:\bbR\times\bbR\to\bbR)^{k_h} \times (\rho^f:(S^1\times\bbR)\times\bbR^2\to\bbR^2)^{k_f}.$$

In detail, the map $\rho^p$ acts on the product manifold $(\bbR\times\bbT)^{k} \times (\bbR\times\bbC)^{k_e} \times (\bbR\times\bbR)^{k_h} \times ((S^1\times\bbR)\times\bbR^2)^{k_f}$ as the following composition:
$$\rho^p = \overbrace{\rho^r\circ\dots\circ\rho^r}^{k}\circ\overbrace{\rho^e\circ\dots\circ\rho^e}^{k_e}\circ\overbrace{\rho^h\circ\dots\circ\rho^h}^{k_h}\circ\overbrace{\rho^f\circ\dots\circ\rho^f}^{k_f},$$
with $\rho^r$ acting on $\bbR\times\bbT$, $\rho^e$ acting on $\bbR\times\bbC$, $\rho^h$ acting on $\bbR\times\bbR$ and $\rho^f$ acting on $(S^1\times\bbR)\times\bbR^2$ and all of them acting as the identity in the respective other components of the product of manifolds.
\label{def:blockcomposition}
\end{defn}

\begin{rem}
The cotangent lift $\hat\rho^p$ of $\rho^p$ is now naturally defined as the following action:
$$(\hat\rho^r:\bbR\times T^*\bbT\to T^*\bbT)^{k} \times (\hat \rho^e:\bbR\times T^*\bbC \to T^*\bbC)^{k_e} \times (\hat\rho^h:\bbR\times T^*\bbR\to T^*\bbR)^{k_h} \times (\hat\rho^f:(S^1\times\bbR)\times T^*\bbR^2 \to T^*\bbR^2)^{k_f}$$
\end{rem}

\begin{thm}[Cotangent model in a covering]
Take the definitions and notation of the section, let $(M,\omega)$ be a symplectic manifold. In a neighbourhood $U$ of a non-degenerate singularity $p$ of rank $k$ and Williamson type $(k_e,k_h,k_f)$, the manifold can be represented as the integral manifold of the infinitesimal generators of $\hat\rho^p$. Each component of the cotangent lift of $\rho^p$, when the Lie group element is fixed, acts on the corresponding blocks as presented in Theorem \ref{thm:directproduct}, i.e., $\hat\rho^r$ acts on ${M^\text{reg}}\cong T^*\bbT$, $\hat\rho^e$ acts on ${M^\text{ell}}\cong T^*\bbC$, $\hat\rho^h$ acts on ${M^\text{hyp}}\cong T^*\bbR$ and $\hat\rho^f$ acts on ${M^\text{foc}}\cong T^*\bbR^2$, and all of them acting as the identity in the respective other components of the product of manifolds.
\label{thm:directproductcotangent}
\end{thm}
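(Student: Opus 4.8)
The plan is to assemble the statement from three ingredients already in hand: the local normal form (Theorem~\ref{thm:directproduct}), the realization of each singular vector field as an infinitesimal generator of a cotangent lift (Lemma~\ref{lem:cotangentmodels}), and the compatibility of cotangent lifts with products. First I would fix a non-degenerate singularity $p$ of rank $k$ and Williamson type $(k_e,k_h,k_f)$ and invoke Theorem~\ref{thm:directproduct} to obtain, on a neighbourhood $U$ of $p$ in a suitable covering, a symplectomorphism identifying $U$ with the direct product of $k$ regular blocks, $k_e$ elliptic blocks, $k_h$ hyperbolic blocks and $k_f$ focus-focus blocks, each carrying its model moment map ($f_r=x$, $f_e=x^2+y^2$, $f_h=xy$, $f_f=(x_1y_2-x_2y_1,\,x_1y_1+x_2y_2)$) and its standard symplectic form. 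The key point is that the Liouville foliation on $U$ is then the product foliation, so it suffices to identify each factor separately with a cotangent model and then take products.

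Next I would treat each block. For the regular block: by Theorem~\ref{thm:cotangentliftALM} (Kiesenhofer--Miranda), a neighbourhood of a regular Liouville torus is symplectomorphic to $(T^*\bbT^n)_{can}$ near its zero section; taking $n=1$ here, the regular one-dimensional block $M^{\text{reg}}$ is $T^*\bbT$ with its canonical form, and the model function $f_r=x$ generates the fibre-translation flow, which is precisely the infinitesimal generator of $\hat\rho^r$, the cotangent lift of the translation action $\rho^r$ on $\bbT$. For the elliptic, hyperbolic and focus-focus blocks: Lemma~\ref{lem:cotangentmodels} says exactly that the Hamiltonian vector fields of $f_e$, $f_h$, $F_f$ on these blocks (identified respectively with $T^*\bbC$, $T^*\bbR$, $T^*\bbR^2$ via the coordinate pairings used in the proof of that lemma) coincide with the infinitesimal generators of the cotangent lifts $\hat\rho^e$, $\hat\rho^h$, $\hat\rho^f$. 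Hence on each block the Liouville foliation by level sets of the moment map is the foliation by orbits of the respective cotangent-lifted action, and the block, together with its model function, is realized as the integral manifold of the infinitesimal generators of that cotangent lift.

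Finally I would globalize over the product. A cotangent lift of a product action $\rho_1\times\cdots\times\rho_m$ on $M_1\times\cdots\times M_m$ is canonically the product of the cotangent lifts, since $T^*(M_1\times\cdots\times M_m)\cong T^*M_1\times\cdots\times T^*M_m$ identifies the canonical Liouville forms and the induced action componentwise; this is exactly the content of the remark preceding the theorem, which defines $\hat\rho^p$ as the product of $\hat\rho^r$, $\hat\rho^e$, $\hat\rho^h$, $\hat\rho^f$ on the corresponding factors (each acting as the identity on the others). Therefore the infinitesimal generators of $\hat\rho^p$ on the product cotangent bundle are, factor by factor, the generators obtained in the previous step, and the direct product of the blocks from Theorem~\ref{thm:directproduct} is recovered as the integral manifold of the distribution they span. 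Combining this identification with the symplectomorphism of the first step gives the statement: $\hat\rho^r$ acts on $M^{\text{reg}}\cong T^*\bbT$, $\hat\rho^e$ on $M^{\text{ell}}\cong T^*\bbC$, $\hat\rho^h$ on $M^{\text{hyp}}\cong T^*\bbR$, $\hat\rho^f$ on $M^{\text{foc}}\cong T^*\bbR^2$, each as the identity on the remaining components.

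The main obstacle, and the only place requiring care, is bookkeeping rather than a genuine difficulty: one must check that the symplectic coordinate identifications used in Lemma~\ref{lem:cotangentmodels} (for instance $\omega=\tfrac{i}{2}dz\wedge d\bar z$ and $f_e=\tfrac12 z\bar z$ in the elliptic case, versus $\omega=dx\wedge dy$ and $f_e=x^2+y^2$ in Theorem~\ref{thm:directproduct}) agree up to the constant rescalings that do not affect the foliation, and that the base-versus-fibre splitting of coordinates in each block matches the cotangent structure ($x$ on the base, $y$ on the fibre for the hyperbolic block; $x_i$ on the base, $y_i$ on the fibre for focus-focus; the angle on the base, the action on the fibre for the regular block). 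Once these identifications are made consistently, the theorem follows formally by taking products, since cotangent lifts, Liouville forms and Hamiltonian foliations are all compatible with direct products. \QED
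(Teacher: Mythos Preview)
Your proposal is correct and follows essentially the same approach as the paper: invoke the block decomposition of Theorem~\ref{thm:directproduct}, identify each block with a cotangent model via Lemma~\ref{lem:cotangentmodels} (plus the regular case), and then use that cotangent lifts commute with products to assemble $\hat\rho^p$. The paper's own proof is a two-sentence version of exactly this argument; your write-up is simply more explicit about the regular block and the coordinate bookkeeping.
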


\begin{proof}
The cotangent lift commutes with the product of manifolds and, hence, with the composition of functions in each block. The block decomposition form around a non-degenerate singularity of Theorem \ref{thm:directproduct} is then compatible with the cotangent models for each singularity in Lemma \ref{lem:cotangentmodels}, proving the cotangent lift block form.
\end{proof}

\begin{rem}
This theorem says that at each non-degenerate singularity, the symplectic manifold can be realized as a cotangent model thanks to the natural symplectic structure of the cotangent bundle and the existence of a cotangent lift model for each type of non-degenerate singularity. The associated moment map is:
$$F=(\overbrace{f_r,\dots,f_r}^{k},\overbrace{f_e,\dots,f_e}^{k_e},\overbrace{f_h,\dots,f_h}^{k_h},\overbrace{f_f,\dots,f_f}^{k_f}),$$
where $f_r, f_e, f_h, f_f$ are the elementary forms of the regular, elliptic, hyperbolic and focus-focus focus singularities respectively.
\end{rem}

\subsection{The discrete cotangent lift}

As a discrete analog of the the classical cotangent lift of a Lie group action, we define the \textit{discrete cotangent lift}, a tool which connects the cotangent models with the geometric quantization and allows to see \BS leaves as a cotangent lift in the classical sense.

On an integrable system of dimension $2n$, \BS orbits correspond to the integer points in the interior of the Delzant polytope. Since they are all of them $n$-dimensional tori, they can be seen as the orbit of a discrete translation action of a single torus.

Consider the integer points in the Delzant polytope of an integrable system, written in coordinates as $(x_1+m_1,\dots,x_n+m_n)\in\bbR^n$ for some fixed $(x_1,\dots,x_n)\in\bbR^n$ and a set $\mathcal{N}=\{(m_1,\dots,m_n)\in\bbZ^n\}$ of $n$-tuples. Here, $\bbR^n$ is the ambient space of the image of the moment map.
Consider the action
$$\begin{array}{rccc}
    \alpha: & \bbZ^n \times \bbR^n & \longrightarrow & \bbR^n\\
     & \big((m_1,\dots,m_n),(x_1,\dots,x_n)\big) & \longmapsto & (x_1+m_1,\dots,x_n+m_n)
\end{array},$$
which can be seen as the restriction to integer $n$-tuples in $\mathcal{N}$ of the associate continuous translation

$$\begin{array}{rccc}
    \tilde\alpha: & \bbR^n \times \bbR^n & \longrightarrow & \bbR^n\\
     & \big((t_1,\dots,t_n),(x_1,\dots,x_n)\big) & \longmapsto & (x_1+t_1,\dots,x_n+t_n)
\end{array}.$$

The orbit of $(x_1,\dots,x_n)\in\bbR^n$ by $\tilde\alpha$ restricted to $\mathcal{N}$ is the set of integer points in the interior of the Delzant polytope.

\begin{defn}
Consider the actions $\alpha$ and $\tilde\alpha$ as defined before. The \textit{discrete cotangent lift} $\hat\alpha: \bbZ^n \times T^*\bbR^n \to T^*\bbR^n$ of $\alpha$ is the restriction to $\bbZ^n \times T^*\bbR^n$ of the classical cotangent lift of the action $\tilde\alpha$.
\end{defn}

In practice, the discrete cotangent lift $\hat\alpha$ extends $\alpha$ to the cotangent bundle of $\bbR^n$.

\begin{lem}
The discrete cotangent lift coincides with the classical cotangent lift of $\tilde\alpha$ at the integer points of the basis.
\label{lem:discretecotliftcoincides}
\end{lem}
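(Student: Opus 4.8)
The plan is to unwind the definitions and observe that the statement is essentially a tautology once one checks that the restriction of a cotangent lift to a subgroup is the cotangent lift of the restricted action, combined with the explicit formula for the cotangent lift of a translation computed earlier in Example \ref{examp:cotangentlifttranslation}. First I would recall that, by definition, the discrete cotangent lift $\hat\alpha\colon \bbZ^n\times T^*\bbR^n\to T^*\bbR^n$ is \emph{defined} to be the restriction of the classical cotangent lift $\widehat{\tilde\alpha}\colon\bbR^n\times T^*\bbR^n\to T^*\bbR^n$ to the subset $\bbZ^n\times T^*\bbR^n\subset\bbR^n\times T^*\bbR^n$. Thus there is nothing to prove about the action on all of $T^*\bbR^n$; the only content of the lemma is the phrase \emph{at the integer points of the basis}, i.e.\ that on the zero section (or on the fibers over integer base points) the discrete lift agrees fiber-by-fiber with $\widehat{\tilde\alpha}$.

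The key computation is the following. For the translation $\tilde\alpha_{(t_1,\dots,t_n)}(x)=x+t$ we have $(d\tilde\alpha_t)_x=\mathrm{Id}$, so $((d\tilde\alpha_t)_x^*)^{-1}=\mathrm{Id}$, and hence, exactly as in Example \ref{examp:cotangentlifttranslation},
\begin{equation*}
\widehat{\tilde\alpha}_t(x,\xi)=\bigl(\tilde\alpha_t(x),((d\tilde\alpha_t)_x^*)^{-1}(\xi)\bigr)=(x+t,\xi).
\end{equation*}
Restricting $t$ to an integer tuple $m=(m_1,\dots,m_n)\in\bbZ^n$ gives $\widehat{\tilde\alpha}_m(x,\xi)=(x+m,\xi)$, which is precisely $\hat\alpha_m(x,\xi)$; and the base component $x+m$ is exactly $\alpha(m,x)$. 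So the diagram
\begin{equation*}
\begin{array}{ccc}
T^*\bbR^n & \xrightarrow{\ \hat\alpha_m\ } & T^*\bbR^n\\
\downarrow\pi & & \downarrow\pi\\
\bbR^n & \xrightarrow{\ \alpha_m\ } & \bbR^n
\end{array}
\end{equation*}
commutes, and the vertical and horizontal arrows all coincide with the corresponding pieces of $\widehat{\tilde\alpha}$ and $\tilde\alpha$. Evaluating at a point $(x,\xi)$ with $x\in\bbZ^n$ then shows that the orbit of the zero section under $\hat\alpha$ is carried to the integer points of the Delzant polytope in the base, matching the discussion preceding the definition. I would phrase this last sentence so as to connect back to the \BS-leaf picture: the discrete cotangent lift of $\tilde\alpha$ produces exactly the lattice $\{(x+m,\xi):m\in\bbZ^n\}$, which over the zero section is the set of integer base points.

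I do not expect any genuine obstacle here: the lemma is a bookkeeping statement whose only subtlety is making the quantifier \emph{at the integer points of the basis} precise, and in view of the trivial Jacobian of a translation the cotangent lift acts as the identity on each fiber, so restriction to $\bbZ^n$ commutes with taking the cotangent lift. The one point worth stating carefully is that, in general, for a Lie group $G$ acting on $M$ and a subgroup $H\le G$, the cotangent lift of the restricted $H$-action is the restriction of the cotangent lift of the $G$-action (immediate from the pointwise formula $\hat\rho_g(q,p)=(\rho_g(q),((d\rho_g)_q^*)^{-1}(p))$, which depends on $g$ only through $\rho_g$); applying this with $G=\bbR^n$, $H=\bbZ^n$ and $\rho=\tilde\alpha$ finishes the proof.
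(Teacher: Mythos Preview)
Your proposal is correct and follows essentially the same approach as the paper: both observe that the lemma is immediate from the definition, since the discrete cotangent lift is \emph{defined} as the restriction of $\widehat{\tilde\alpha}$ to $\bbZ^n\times T^*\bbR^n$. The paper's proof is a single sentence to this effect, whereas you supply the explicit computation of the cotangent lift of a translation and the general subgroup-restriction remark; this extra detail is sound but not required.
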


\begin{proof}
It is direct from the definition, since it is constructed as its restriction to the integer points.
\end{proof}

The discrete cotangent lifted action $\hat\alpha$ acts on the pairs of points in the interior of the Delzant polytope and tori in the manifold. Its orbit is not only the whole set of integer points in the Delzant polytope but the pairs of integer points and their associated \BS leaves (see Figure \ref{fig:BSaction}).

The action $\hat\alpha$ is well defined on the cotangent bundle of a vector space. Not only this, but it is also compatible with the symplectic structure of the cotangent bundle. To see this, consider the coordinates $(x_1,\dots,x_n)$ on the base and the symplectic dual coordinates $(y_1,\dots,y_n)$ on the fiber of $T^*\bbR^n$. Since the natural pairing of each coordinate $x_i$ with its dual $y_i$ coincides with the symplectic conjugation (reflected in the Liouville $1$-form $\lambda=\sum y_i dx_i $ and in the symplectic form $\omega = d\lambda = \sum dy_i\wedge dx_i$), the lift of the discrete action $\alpha$ preserves the symplectic structure.

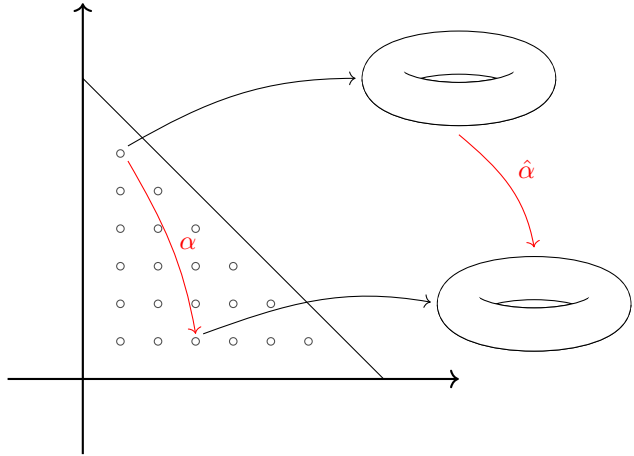
\begin{figure}[ht!]
\begin{center}\begin{tikzpicture}
\draw (0,0) -- (4,0) -- (0,4) -- (0,0);
\draw[thick, ->] (-1,0) -- (5,0);
\draw[thick, ->] (0,-1) -- (0,5);
\draw[black!70] (0.5,0.5) circle (0.05);
\draw[black!70] (0.5,1) circle (0.05);
\draw[black!70] (0.5,1.5) circle (0.05);
\draw[black!70] (0.5,2) circle (0.05);
\draw[black!70] (0.5,2.5) circle (0.05);
\draw[black!70] (0.5,3) circle (0.05);
\draw[black!70] (1,0.5) circle (0.05);
\draw[black!70] (1,1) circle (0.05);
\draw[black!70] (1,1.5) circle (0.05);
\draw[black!70] (1,2) circle (0.05);
\draw[black!70] (1,2.5) circle (0.05);
\draw[black!70] (1.5,0.5) circle (0.05);
\draw[black!70] (1.5,1) circle (0.05);
\draw[black!70] (1.5,1.5) circle (0.05);
\draw[black!70] (1.5,2) circle (0.05);
\draw[black!70] (2,0.5) circle (0.05);
\draw[black!70] (2,1) circle (0.05);
\draw[black!70] (2,1.5) circle (0.05);
\draw[black!70] (2.5,0.5) circle (0.05);
\draw[black!70] (2.5,1) circle (0.05);
\draw[black!70] (3,0.5) circle (0.05);
\node (T1) at (6,1) {};
\node (T1b) [left = of T1] {};
\node (T3) at (5,4) {};
\node (T3b) [left = of T3] {};
\pic at (T1) {torus={1cm}{2.8mm}{70}};
\pic at (T3) {torus={1cm}{2.8mm}{70}} {};
\draw [->] (0.6,3.1) to [out=30,in=180] (T3b);
\draw [->] (1.6,0.6) to [out=20,in=170] (T1b);
\draw [red,->] (0.6,2.9) to [out=300,in=100] (1.5,0.6);
\draw [red,->] (5,3.25) to [out=320,in=100] (6,1.75);
\node[red] (alpha) at (1.4,1.8) {$\alpha$};
\node[red] (hatalpha) at (5.9,2.8) {$\hat\alpha$};
\end{tikzpicture}\end{center}
\caption{The set of tori corresponding to the integer points in the interior of Delzant's polytope is the orbit of the action $\hat\alpha$.}
\label{fig:BSaction}
\end{figure}

Having introduced this language, we can reformulate Theorems \ref{th:sniat} and \ref{thm:hamlocaltor}, since in both \'Sniatycki's and Hamilton's cases there is a toric fibration.

\begin{thm}
Let $(F,M,\omega)$ be a toric integrable system in a compact symplectic manifold with a prequantization line bundle $\bbL$, equipped with a locally toric Lagrangian fibration. Let $\Delta$ be the Delzant polytope of the momentum map of $F$. Then, the set of \BS leaves coincides with the intersection of $\Delta$ with the image of the discrete cotangent lift of the action
$$\begin{array}{rccc}
    \alpha: & \bbZ^n \times \bbR^n & \longrightarrow & \bbR^n\\
     & \big((m_1,\dots,m_n),(x_1,\dots,x_n)\big) & \longmapsto & (x_1+m_1,\dots,x_n+m_n).
\end{array},$$
where $(m_1,\dots,m_n)$ is $n$-tuple of integers corresponding to any of the \BS leaves and $(x_1,\dots,x_n)$ can be transported via a symplectomorphism $\phi$ to the base coordinates of the Lagrangian fibration of $M$.
\label{th:BSdiscretecotlift}
\end{thm}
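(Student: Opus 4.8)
The plan is to show that this theorem is a repackaging of \'Sniatycki's theorem (Theorem~\ref{th:sniat}) and Hamilton's theorem (Theorem~\ref{thm:hamlocaltor}) in the language of the discrete cotangent lift $\hat\alpha$, combined with the elementary fact that $\bbZ^n$ acts transitively on itself by translations. Concretely, I would first recall how the Bohr--Sommerfeld condition reads in action coordinates, then compute the base orbit of $\hat\alpha$, and finally intersect with $\Delta$.

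\emph{Step 1: Bohr--Sommerfeld leaves are the integer points of $\operatorname{int}\Delta$.} By the Arnold--Liouville--Mineur theorem (Theorem~\ref{thm:ALM}) and its cotangent reformulation (Theorem~\ref{thm:cotangentliftALM}), a saturated neighbourhood of a regular Liouville torus is symplectomorphic to a neighbourhood of the zero section of $(T^*\bbT^n)_{can}$, and under this identification one takes the prequantum connection $1$-form to be the canonical Liouville $1$-form $\lambda=\sum_i p_i\,d\theta_i$ (the canonical choice in the regular case). The holonomy of $\nabla$ around the $i$-th generator $\gamma_i$ of $\pi_1$ of the torus $\{p=p_0\}$ is then
$$\mathrm{hol}_{\gamma_i}(\nabla)=\exp\!\Big(i\oint_{\gamma_i}\lambda\Big)=\exp\!\big(2\pi i\,p_0^{(i)}\big),$$
so by Definition~\ref{d:bs} the leaf is Bohr--Sommerfeld if and only if every action coordinate $p_0^{(i)}$ is an integer. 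For a locally toric Lagrangian fibration the regular fibres lie over $\operatorname{int}\Delta$ and the singular fibres over $\partial\Delta$ (the latter never being counted, cf.\ Theorem~\ref{thm:hamlocaltor}), so the moment map puts the set of Bohr--Sommerfeld leaves in bijection with $\operatorname{int}\Delta\cap\bbZ^n$ --- the fact already implicit in the corollary stated after Theorem~\ref{th:sniat}.

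\emph{Step 2: the base orbit of $\hat\alpha$.} Since $\tilde\alpha$ is a translation of $\bbR^n$, its cotangent lift acts trivially on the fibre (Example~\ref{examp:cotangentlifttranslation}), and $\hat\alpha$ is its restriction to $\bbZ^n\times T^*\bbR^n$. Fix one Bohr--Sommerfeld leaf and let $(x_1,\dots,x_n)$ be its image under the moment map, transported via the symplectomorphism $\phi$ to the base of $T^*\bbR^n$; by Step~1 this point lies in $\bbZ^n$. The base projection of the $\hat\alpha$-orbit of (a point representing) this leaf is exactly the coset $(x_1,\dots,x_n)+\bbZ^n=\bbZ^n$. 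Intersecting with $\Delta$ returns $\Delta\cap\bbZ^n$, which by Step~1 is precisely the set of Bohr--Sommerfeld leaves; this is the claimed identification, with $\phi$ the map transporting the ambient $\bbR^n\supset\Delta$ of the moment image to the base coordinates of the Lagrangian fibration of $M$.

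The point requiring care --- rather than a genuine obstacle --- is the assertion that the Bohr--Sommerfeld leaves form a full coset of the \emph{integer} lattice $\bbZ^n$, and not merely of a finer sublattice, since this is what makes the $\bbZ^n$-action transitive on them. This follows because the local action-angle charts of a toric integrable system glue into the integral affine structure of the Delzant polytope (Delzant's theorem), so the normalization of $\nabla$ along overlapping cotangent models is consistent and ``having integer action coordinates'' is a well-defined global condition cut out by $\bbZ^n\subset\bbR^n$. Once this compatibility is granted, Steps~1 and~2 assemble directly into the statement.
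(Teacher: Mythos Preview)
Your proposal is correct and follows essentially the same route as the paper's proof: both reduce the statement to the identification of Bohr--Sommerfeld leaves with the integer lattice points of $\Delta$ (via Theorems~\ref{th:sniat} and~\ref{thm:hamlocaltor}) together with the observation that the base projection of the $\hat\alpha$-orbit through any one such point is the full integer lattice. Your version is somewhat more explicit---you carry out the holonomy computation in action-angle coordinates and address the global well-definedness of the integer lattice via the Delzant integral affine structure---whereas the paper simply invokes Theorem~\ref{thm:directproductcotangent} and Lemma~\ref{lem:discretecotliftcoincides} and then cites \'Sniatycki and Hamilton directly; but the skeleton of the argument is the same.
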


\begin{proof}
First, recall that in Theorem \ref{thm:directproductcotangent} we proved that the neighbourhood of every regular point and every non-degenerate singularity can be seen as a cotangent lift.

Then, since by Lemma \ref{lem:discretecotliftcoincides} the discrete cotangent lift can be thought as the restriction of the classical cotangent lift and its image is provided by the integer $n$-tuples.

Finally, Theorems \ref{th:sniat} and \ref{thm:hamlocaltor} prove that \BS fibers are isolated compact tori which are separated by integer values which form a lattice in the interior of the Delzant polytope. Then, in a toric manifold (with only elliptic singularities), the \BS fibers, which correspond to Lagrangian tori, are the image of the discrete cotangent lift of $\alpha$.
\end{proof}

The set of regular \BS leaves in the regular and toric case both coincides with the discrete cotangent lift of a lattice of the basis.

\section{Cotangent models for quantization}
\label{sec:cotangentmodelsforquantization}

In both the geometric quantization procedure and the cotangent lift technique there appears a $1$-form. In the first case it is the connection $1$-form $\Theta$ and in the second case it is the Liouville $1$-form $\lambda$, and in both cases their differential is the symplectic form $\omega$. It seems that there is a sort of freedom of choice for a $1$-form to satisfy that its differential is $\omega$, but the fact is that, in both cases, it is determined by precise conditions.

In the prequantization of the elliptic case (the cylinder of Section \ref{sec:sheafcyl}) one can already get an intuition of how the $1$-form is determined and which conditions it has to fulfill, apart from the obvious of having the symplectic form as its differential. In both the elliptic and the hyperbolic cases the conditions are essentially the same.

If we go back to the cotangent models in Section \ref{sec:cotangentmodels}, we see that it is natural to present the cotangent models associating them not only to a set of a symplectic manifold $(M^{2n},\omega)$ and an integrable system $F$ with non-degenerate singularities but also to a connection $\nabla$ with a connection $1$-form $\Theta$. We just proved that the connection $1$-form is a Liouville $1$-form of the type $\lambda = p\,dq$, where $p$ is the moment map "coordinate" (it could correspond to the singular one) and $q$ is the symplectic orthogonal to $p$.

\begin{thm}
Let $F$ be an integrable system with only non-degenerate singularities of elliptic and hyperbolic type defined in the prequantum line bundle of a symplectic manifold $(M^{2n},\omega)$. Let $p$ be a singular point of rank $k$ and Williamson type $(k_e,k_h,0)$. Then, in a neighbourhood of each point, there exists a unique product-like cotangent model such that the connection $1$-form $\Theta$ coincides with the Liouville $1$-form $\lambda=\sum p_i\,dq_i$, where the $p_i$ are the moment map coordinates and the $q_i$ are the symplectic orthogonal to $p_i$. The connection $1$-form $\Theta(q_1,\dots,q_n,p_1,\dots,p_n)$ of the prequantization of $F$ is determined up to a constant by the following conditions:
\begin{itemize}
    \item $d\Theta = \omega$,
    \item $\iota_{\frbd{p_i}}\Theta=\Theta\left(\frbd{p_i}\right)=0$, for $i=1,\dots,n$
    \item $\iota_{\frbd{q_i}}\Theta= \Theta\left(\frbd{q_i}\right)=F(p_i)$, for $i=1,\dots,n$.
\end{itemize}
\label{prop:cotangentmodelswithconnection}
\end{thm}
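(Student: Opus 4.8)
The plan is to combine the local normal form results (Theorems~\ref{thm:locallinearization} and~\ref{thm:directproductcotangent}) with an elementary analysis of closed $1$-forms on the model blocks, performed one block at a time. First I would invoke Theorem~\ref{thm:directproductcotangent}: in a neighbourhood $U$ of the singular point $p$ we may identify $(M,\omega,F)$ with a product of regular, elliptic and hyperbolic cotangent blocks (there are no focus-focus blocks since $k_f=0$), each carrying its standard symplectic form $dq_i\wedge dp_i$ (or $dx_i\wedge dy_i$) and its elementary moment map component $f_r=x$, $f_e=\tfrac12(x^2+y^2)$, $f_h=xy$. Since $\Theta$ and $\lambda$ both primitivize $\omega$ and the blocks split the tangent bundle symplectically, it suffices to prove uniqueness and the stated formula for $\Theta$ on each individual block, and then take the external sum $\Theta=\sum_i\Theta_i$; this is legitimate because $d$ and the contraction operators respect the product decomposition.

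Next I would carry out the per-block computation. On a block, write $\Theta=\Theta-\lambda+\lambda$ where $\lambda=p_i\,dq_i$ is the candidate Liouville form in the action-angle-type coordinates $(q_i,p_i)$ adapted to the block (so that $p_i$ is the relevant moment-map coordinate: $p_i=f_h$ on a hyperbolic block, $p_i$ a rotation-angle-conjugate action on an elliptic block, $p_i=x$ on a regular block). The first condition $d\Theta=\omega=d\lambda$ forces $\Theta-\lambda$ to be closed; on the model block (a disk times a disk, or the cylinder of Section~\ref{sec:sheafcyl} in the elliptic case, or $T^*\bbR$ in the hyperbolic case) I would either use that the relevant first de~Rham cohomology is generated by $dq_i$ alone — for the cylinder $\bbR\times S^1$ this is exactly the computation recalled there — or, on a contractible block, that $\Theta-\lambda=dg$ for a smooth function $g$. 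Then the contraction conditions $\iota_{\partial/\partial p_i}\Theta=0$ and $\iota_{\partial/\partial q_i}\Theta=F(p_i)=p_i$ pin down $\Theta-\lambda$ completely: $\iota_{\partial/\partial p_i}(\Theta-\lambda)=0$ kills the $dp_i$-component of $\Theta-\lambda$ and the $p_i$-dependence of $g$, while $\iota_{\partial/\partial q_i}(\Theta-\lambda)=0$ then kills the $dq_i$-component; in the presence of the $S^1$ (elliptic) factor, the second and third conditions together rule out the cohomologically nontrivial multiple of $dq_i$, leaving only a global additive constant. Reassembling, $\Theta=\sum_i p_i\,dq_i$ up to a constant, which also exhibits existence of the product-like cotangent model with the prescribed connection $1$-form — this is precisely the cotangent lift model of Lemma~\ref{lem:cotangentmodels} equipped with its Liouville form as connection $1$-form.

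The main obstacle, and the step that deserves the most care, is the elliptic block: there the natural coordinates coming from Eliasson's linearization are $(x,y)$ with $f_e=\tfrac12(x^2+y^2)$, but the "action-angle" coordinates $(q,p)=(\theta,\tfrac12 r^2)$ in which $\lambda=p\,dq$ is the intended connection form are singular at the origin, so one must check that the globally defined form $\tfrac12 r^2\,d\theta=\tfrac12(x\,dy-y\,dx)$ (which is smooth across $x=y=0$) indeed satisfies the three conditions with $\partial/\partial p$, $\partial/\partial q$ interpreted on the punctured block, and that uniqueness is not lost by the hole at the origin — here one appeals to the cylinder computation of Section~\ref{sec:sheafcyl}, where $H^1$ is generated by $d\theta$, to conclude that any other primitive differs by $c\,d\theta$ with $c$ fixed by the contraction conditions. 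A secondary subtlety is bookkeeping the several $q_i$'s: the conditions are imposed for all $i$ simultaneously, and one must note that $\iota_{\partial/\partial q_i}\Theta=F(p_i)$ with $F(p_i)$ meaning the $i$-th moment-map component evaluated along the block (equal to $p_i$ in the action coordinate), so that the block-wise arguments genuinely assemble into the single statement $\Theta=\sum_i p_i\,dq_i+\mathrm{const}$.
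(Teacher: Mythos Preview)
Your proposal is correct and follows essentially the same route as the paper: reduce to the elementary $2$-dimensional blocks via Theorem~\ref{thm:directproductcotangent}, pass to adapted action--angle type coordinates $(p_i,q_i)$ on each block (the paper calls these ``elliptic coordinates'' $(s,\theta)$ with $s=\tfrac12 r^2$ and ``hyperbolic coordinates'' $(h,b)$ with $h=xy$, $b=-\tfrac12\ln|x/y|$), and then impose the three conditions to pin down $\Theta$ up to a constant. The only cosmetic difference is that the paper writes $\Theta=\alpha\,dp_i+\beta\,dq_i$ and solves for $\alpha,\beta$ directly from the three conditions, whereas you phrase the same computation cohomologically by noting that $\Theta-\lambda$ is closed and then killing its components; your extra remark that $\tfrac12 r^2\,d\theta=\tfrac12(x\,dy-y\,dx)$ extends smoothly across the elliptic origin is a point the paper leaves implicit.
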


\begin{proof}
We prove that it is true for regular and simple elliptic and hyperbolic blocks on a $2$-manifold $(M^2,\omega)$. Since any singularity of Williamson type $(k_e,k_h,0)$ can be written as a product of these types of $2$-dimensional blocks, the $1$-form $\Theta$ will simply be the sum of the form at each block.

Around a regular point, the moment map writes simply as $f=x$ in Cartesian coordinates $(x,y)$. We take $\omega=dx\wedge dy$ and the connection $1$-form will be written as $\Theta(x,y)=\alpha(x,y)dx+\beta(x,y)dy$ for some smooth functions $\alpha$ and $\beta$. The second condition implies that $\alpha(x,y)=0$. Then, the third condition implies $\beta=\beta(x)$ and, finally, by the first condition we obtain that $\Theta_r=xdy$ is the $1$-form we were looking for.

The moment map can be written as $f_e=\frac{1}{2}(x^2+y^2)$ around an elliptic singularity in Cartesian coordinates; whilst in polar coordinates $(r,\theta)$ it can be written as $f_e=\frac{1}{2}r^2$. The symplectic form can be written as $\omega=dx\wedge dy=rdr\wedge d\theta$. Since we want to work with the coordinate of the moment map $f_e$, which is a regular function of the $r$ polar coordinate, we change to \emph{elliptic coordinates} $(s,\theta)$, which are essentially polar coordinates where with the transformation $s=\frac{1}{2}r^2$. Then, $\omega=ds\wedge d\theta$ and a connection $1$-form is written as $\Theta(s,\theta)=\alpha(s,\theta)ds+\beta(s,\theta)d\theta$ for some smooth functions $\alpha$ and $\beta$.

Since $\Theta$ has to satisfy $\Theta\left(\frbd{s}\right)=0$, $\alpha$ needs to be $0$. The condition $\Theta\left(\frbd{\theta}\right)=f_e(s)$ implies that $\beta=\beta(s)$. Finally, since we want that $d\Theta=\omega=ds\wedge d\theta$, we need to require that $\frbdt{\beta}{s}=1$. Then, $\Theta_{e}=sd\theta$ is the unique $1$-form satisfying the conditions.

The moment map around an hyperbolic singularity can be written as $f=xy$ in Cartesian coordinates, with symplectic form $\omega=dx\wedge dy$. In \emph{hyperbolic coordinates} $(h,b)$, where: $$h=xy,$$ $$b=-\frac{1}{2}\ln{\left\lvert\frac{x}{y}\right\rvert},$$ the connection $1$-form can be written as $\Theta=\alpha(h,b)dh+\beta(h,b)db$. Again, the three conditions together imply $\Theta_h= hdb$.

The definition of the Liouville $1$-form on a cotangent bundle of a smooth manifold is precisely built from the fibration position-momentum, and it finishes the proof. The $1$-form is, precisely:
$$\Theta=\sum_{i=1}^{k}\Theta_r+\sum_{i=k+1}^{k+k_e}\Theta_e+\sum_{i=k+k_e+1}^{k+k_e+k_h}\Theta_h,$$
with each $\Theta_j$ acting on the corresponding block in the decomposition of $M$ in $2$-blocks given by Theorem \ref{thm:directproduct}.
\end{proof}

\begin{rem}
In both cases we can interpret the contraction conditions as a symplectic orthogonality, which determines a unique choice of the $1$-form. In particular, the contraction of the $1$-form with the vector field parallel to the foliation of the moment map of the singularity gives precisely the moment map function, while its contraction with a vector field which is symplectic orthogonal to this foliation vanishes.
\end{rem}

With the notation of Section \ref{sec:cotangentmodels}, we can state this result as a cotangent model as follows.

\begin{thm}
Consider a triple $((M,\omega),F=(f_1,\dots,f_n),\Theta)$, where $(M,\omega)$ is a symplectic manifold, $F$ is an integrable system with only non-degenerate singularities of rank $k$ and Williamson type $(k_e,k_h,0)$ and $\Theta$ is a connection $1$-form. Then, locally at any singularity $p$, the system is equivalent to a triple where $\Theta$ is determined by the Liouville $1$-form of the cotangent bundle corresponding to the cotangent lift model of the singularity.
\label{th:triplemodel}
\end{thm}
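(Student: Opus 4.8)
The plan is to obtain Theorem~\ref{th:triplemodel} as a repackaging, in the cotangent-model language of Section~\ref{sec:cotangentmodels}, of the two results that precede it: the symplectic-and-foliation content comes from Theorem~\ref{thm:directproductcotangent} (together with Theorem~\ref{thm:cotangentliftALM} for the torus directions) and the content about the connection $1$-form from Theorem~\ref{prop:cotangentmodelswithconnection}. So I would first fix a non-degenerate singularity $p$ of rank $k$ and Williamson type $(k_e,k_h,0)$ and invoke Theorem~\ref{thm:directproductcotangent} in the $k_f=0$ case. This produces a neighbourhood $U$ of $p$ and a foliation-preserving symplectomorphism $\psi\colon (U,\omega)\to (V,\omega_{can})$, where $V$ is a neighbourhood of the relevant torus inside the product cotangent model $(T^*\bbT)^{k}\times(T^*\bbC)^{k_e}\times(T^*\bbR)^{k_h}$, $\omega_{can}=d\lambda$ is the canonical symplectic form, and $\psi$ carries $F$ to the block moment map $f_r,\dots,f_r,f_e,\dots,f_e,f_h,\dots,f_h$ of the remark following Theorem~\ref{thm:directproductcotangent}.

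Next I would transport the connection $1$-form, setting $\widetilde\Theta:=(\psi^{-1})^*\Theta$. Since $\psi$ is a symplectomorphism, $d\widetilde\Theta=\omega_{can}=d\lambda$, so $\widetilde\Theta-\lambda$ is a closed $1$-form on $V$. On the elliptic and hyperbolic factors a sufficiently small $V$ is contractible, so there $\widetilde\Theta-\lambda=dg$ is exact, and replacing the trivialisation of $\bbL$ by the gauge transformation $\sigma\mapsto e^{ig}\sigma$ turns $\widetilde\Theta$ into $\lambda$ without altering $\omega$ or $F$. On each regular block $T^*\bbT$ one has $H^1(V)\cong\bbR$, hence $\widetilde\Theta-\lambda=dg+a\,dq$ for a constant $a$; after removing $dg$ as above, the residual $a\,dq$ is absorbed by the constant shift of action coordinate $p\mapsto p+a$, which preserves both $\omega=dp\wedge dq$ and the level-set foliation and merely repositions the zero section (equivalently, the \BS lattice). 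After these moves $\widetilde\Theta=\lambda$ on all of $V$.

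Finally I would identify $\lambda$ with the Liouville form of the cotangent lift model. By Theorem~\ref{prop:cotangentmodelswithconnection}, $\lambda$ decomposes block by block as $\sum\Theta_r+\sum\Theta_e+\sum\Theta_h$ with $\Theta_r=x\,dy$, $\Theta_e=s\,d\theta$, $\Theta_h=h\,db$, and these are exactly the canonical $1$-forms $p\,dq$ on $T^*\bbT$, $T^*\bbC$ (in elliptic coordinates) and $T^*\bbR$ (in hyperbolic coordinates) attached to the cotangent lifts $\hat\rho^r,\hat\rho^e,\hat\rho^h$ of Lemma~\ref{lem:cotangentmodels}. Composing $\psi$ with the gauge change then exhibits the triple $((U,\omega),F,\Theta)$ as equivalent to the triple on the product cotangent model whose connection $1$-form is the canonical Liouville form, which is the assertion.

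I expect the only step needing real care is the torus directions in the transport step: because the local model around a rank-$k$ singularity is only homotopy equivalent to $\bbT^k$, the difference $\widetilde\Theta-\lambda$ need not be exact, and one must check that its periods are removable by a move (the shift of action coordinates) that genuinely stays inside the cotangent-model picture and is compatible with the prequantum structure. The elliptic and hyperbolic factors are unobstructed, and everything else is a direct assembly of Theorems~\ref{thm:directproductcotangent} and~\ref{prop:cotangentmodelswithconnection}.
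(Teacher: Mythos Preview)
Your proposal is correct and in fact more detailed than the paper's own treatment. The paper gives no separate proof of Theorem~\ref{th:triplemodel}: it is introduced with the sentence ``With the notation of Section~\ref{sec:cotangentmodels}, we can state this result as a cotangent model as follows,'' and is meant as a direct restatement of Theorem~\ref{prop:cotangentmodelswithconnection} in cotangent-lift language, with the Corollary that follows supplying the block decomposition by invoking Theorems~\ref{thm:directproduct} and~\ref{thm:directproductcotangent}.

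What you add is precisely the step the paper leaves implicit: the paper's proof of Theorem~\ref{prop:cotangentmodelswithconnection} only shows that the three contraction conditions pin down $\Theta$ to the Liouville form block by block, not that an \emph{arbitrary} connection $1$-form with curvature $\omega$ can be brought to that form. Your gauge-transformation argument (removing the exact part of $\widetilde\Theta-\lambda$ by $\sigma\mapsto e^{ig}\sigma$) together with the action shift $p\mapsto p+a$ on the torus factors supplies exactly this missing normalisation, and your final identification of $\lambda$ with the block forms $\Theta_r,\Theta_e,\Theta_h$ is then literally the content of Theorem~\ref{prop:cotangentmodelswithconnection}. So both routes land in the same place, but yours makes explicit why the given $\Theta$, and not just some $\Theta$, is equivalent to the Liouville form of the cotangent model.
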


\begin{cor}
The triple $((M,\omega),F=(f_1,\dots,f_n),\Theta)$ with the same properties as in Theorem \ref{th:triplemodel} locally decomposes as products of two dimensional triples. Explicitly, it decomposes as:
$$((M^\text{reg},\omega),f_r,\Theta_r)^{k} \times ((M^\text{ell},\omega),f_e,\Theta_e)^{k_e} \times ((M^\text{hyp},\omega),f_h,\Theta_h)^{k_h}.$$
\end{cor}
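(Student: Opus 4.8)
The plan is to deduce the corollary directly from Theorem~\ref{th:triplemodel} together with the block-decomposition machinery already in place, so that essentially no new computation is required. First I would invoke Theorem~\ref{thm:directproduct} (the Eliasson--Miranda--Zung block form): in a neighbourhood of a non-degenerate singularity $p$ of rank $k$ and Williamson type $(k_e,k_h,0)$, the symplectic manifold $(M,\omega)$ is symplectomorphic, respecting the Liouville foliation, to the direct product of $k$ regular blocks $(M^\text{reg},dx\wedge dy)$, $k_e$ elliptic blocks $(M^\text{ell},dx\wedge dy)$ and $k_h$ hyperbolic blocks $(M^\text{hyp},dx\wedge dy)$, with the moment map $F$ decomposing accordingly as $(f_r,\dots,f_r,f_e,\dots,f_e,f_h,\dots,f_h)$. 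Under this identification the symplectic form is the sum $\sum_i \omega_i$ of the block forms, and each first integral depends only on the coordinates of its own block.

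Next I would carry the connection $1$-form through this decomposition. By Theorem~\ref{prop:cotangentmodelswithconnection} and Theorem~\ref{th:triplemodel}, in these coordinates $\Theta$ is forced (up to constants) to be $\sum_{i=1}^k \Theta_r + \sum \Theta_e + \sum \Theta_h$, where $\Theta_r = x\,dy$ on a regular block, $\Theta_e = s\,d\theta$ on an elliptic block (in elliptic coordinates), and $\Theta_h = h\,db$ on a hyperbolic block (in hyperbolic coordinates). Since each summand involves only the coordinates of a single $2$-dimensional block, $\Theta$ is literally the pullback-sum of the one-forms $\Theta_r,\Theta_e,\Theta_h$ under the projections onto the factors, and $d\Theta_r = \omega_r$, $d\Theta_e=\omega_e$, $d\Theta_h=\omega_h$ block by block. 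Therefore the triple $((M,\omega),F,\Theta)$ is, locally at $p$ and in these coordinates, exactly the product triple
$$((M^\text{reg},\omega),f_r,\Theta_r)^{k} \times ((M^\text{ell},\omega),f_e,\Theta_e)^{k_e} \times ((M^\text{hyp},\omega),f_h,\Theta_h)^{k_h},$$
which is the assertion. To be fully careful I would spell out what "product of triples" means: it is the manifold $\prod M_i$ with symplectic form $\sum \pi_i^*\omega_i$, moment map $(\pi_1^*f_1,\dots)$, and connection one-form $\sum \pi_i^*\Theta_i$; then the content of the corollary is precisely that the chosen local symplectomorphism intertwines all three pieces of structure simultaneously, which is exactly the conjunction of Theorem~\ref{thm:directproduct} (for $(M,\omega,F)$) and Theorem~\ref{th:triplemodel} (which pins down $\Theta$).

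The only genuine point to check — and the step I expect to be the main obstacle — is compatibility of the coordinate choices: Theorem~\ref{thm:directproduct} produces Darboux coordinates on each block, but Theorem~\ref{prop:cotangentmodelswithconnection} expresses $\Theta_e$ and $\Theta_h$ in the \emph{elliptic} coordinates $(s,\theta)$, $s=\tfrac12 r^2$, and the \emph{hyperbolic} coordinates $(h,b)$, $h=xy$, $b=-\tfrac12\ln|x/y|$, which are adapted to the moment map rather than Darboux. One must verify that the transition from Cartesian to these adapted coordinates is itself foliation-preserving and symplectic (indeed $ds\wedge d\theta = dx\wedge dy$ and $dh\wedge db = dx\wedge dy$ on the relevant domain), so that composing it with the Eliasson--Miranda--Zung symplectomorphism still yields a single symplectomorphism realizing the product structure for all three data at once. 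Once this bookkeeping is done the corollary is immediate, since the defining conditions $d\Theta=\omega$, $\iota_{\partial/\partial p_i}\Theta=0$, $\iota_{\partial/\partial q_i}\Theta=F(p_i)$ are manifestly satisfied summand by summand.
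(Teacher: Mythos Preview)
Your proposal is correct and follows essentially the same approach as the paper's own proof: invoke the block decomposition (Theorem~\ref{thm:directproduct}) for the pair $((M,\omega),F)$, then observe that the connection $1$-form $\Theta$ also splits as a sum over the $2$-dimensional blocks. The paper cites Theorem~\ref{thm:directproductcotangent} (the cotangent block model) to justify the second step, whereas you appeal to Theorem~\ref{th:triplemodel} and the explicit formula $\Theta=\sum\Theta_r+\sum\Theta_e+\sum\Theta_h$ from Theorem~\ref{prop:cotangentmodelswithconnection}; these are equivalent, and your version is in fact more explicit, including the useful bookkeeping check that the passage from Darboux to elliptic/hyperbolic coordinates is symplectic and foliation-preserving --- a point the paper leaves implicit.
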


\begin{proof}
By Theorem \ref{thm:directproduct}, in a neighbourhood of a non-degenerate singularity, the manifold decomposes as the direct product of blocks which are all of them $2$-dimensional if there are no focus-focus type components. By Theorem \ref{thm:directproductcotangent}, the block decomposition model is a cotangent model, so the triple $((M,\omega),F=(f_1,\dots,f_n),\Theta)$ decomposes as products of two dimensional triples.
\end{proof}

\begin{rem}
By Theorem \ref{th:triplemodel}, the integrable system with only elliptic and hyperbolic singularities is automatically a cotangent lift.
\end{rem}

\section{Proposal of the new model of geometric quantization}
\label{sec:newmodel}

Jacques Vey proved in \cite{Vey78} that there was a unique complex model for the linearization of analytical systems. Indeed, he proved the following theorem in the holomorphic set-up.

\begin{thm}[Vey, \cite{Vey78}]
Let $(M^{2n},\omega)$ be an analytic complex symplectic manifold. Let $A$ be a Liouville algebra of critical function germs at a point $p\in M$. Then, there exist holomorphic coordinates $(p_1,\dots,p_n, q_1,\dots,q_n)$ in a neighbourhood of $p$ on $M$ such that $\omega=\sum_i dp_i\wedge dq_i$ and such that $A$ is the analytic algebra generated by the $n$ functions $h_i=p_iq_i$.
\end{thm}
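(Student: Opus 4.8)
The plan is to prove this by the classical strategy of \emph{reduction to a Birkhoff normal form}, carried out in three stages: a linear-algebra normalization of the quadratic parts, a formal normalization obtained by solving homological equations degree by degree, and a convergence argument for the normalizing transformation.

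First I would exploit that over $\mathbb{C}$ there is essentially only one type of non-degenerate singularity. By hypothesis $A$ is generated by $n$ germs $f_1,\dots,f_n$ in involution, each critical at $p$, whose Hessians $\omega^{-1}d^2f_i$ span a Cartan subalgebra of $\mathfrak{sp}(2n,\mathbb{C})$. Since all Cartan subalgebras of $\mathfrak{sp}(2n,\mathbb{C})$ are conjugate under $\mathrm{Sp}(2n,\mathbb{C})$ (the complex analogue of Williamson's Theorem~\ref{thm:WilliamsonCartanSubalgebra}, in which only the "hyperbolic" type survives), a linear symplectic change of coordinates together with a change of generating basis of $A$ brings the quadratic parts to standard form: each generator becomes $f_i = h_i + O(3)$ with $h_i = p_iq_i$ and $\omega=\sum_i dp_i\wedge dq_i$. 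The $h_i$ are then functionally independent and pairwise in involution, i.e. a complete commuting system, whose Hamiltonian flows are precisely the hyperbolic (here $\mathbb{C}^*$-) actions $\rho^h$ of Definition~\ref{def:cotliftactions} in each coordinate pair.

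Second I would normalize formally. The key computation is $\{h_i,p^\alpha q^\beta\}=(\beta_i-\alpha_i)\,p^\alpha q^\beta$, so the common kernel of the operators $\{h_i,\cdot\}$ on formal power series is exactly the space of series in $h_1,\dots,h_n$ (the "resonant" part), while on the complementary non-resonant monomials each $\{h_i,\cdot\}$ is diagonal with eigenvalues in $\mathbb{Z}\setminus\{0\}$. Proceeding by degree, I would build a formal symplectomorphism $\Phi$ as an infinite composition of time-one flows of Hamiltonian vector fields $X_{\chi_k}$, $\chi_k$ homogeneous of degree $k$, such that every $\Phi^*f_i$ is a formal power series in $h_1,\dots,h_n$. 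At each step one must solve the homological equations $\{h_i,\chi_k\}=(\text{non-resonant part of the degree-}k\text{ error of }f_i)$ for $i=1,\dots,n$ \emph{simultaneously}; the involutivity $\{f_i,f_j\}=0$ propagates to a compatibility (cocycle) condition on the errors that makes this solvable, and $\chi_k$ is then obtained monomial by monomial by dividing by the eigenvalues $\beta_i-\alpha_i$. Since $\Phi^*f_i=h_i+(\text{higher order in the }h)$ are $n$ independent formal functions of $(h_1,\dots,h_n)$, the formal inverse function theorem shows they generate the same algebra as the $h_i$; hence $\Phi^*A$ is exactly the algebra of formal series in $h_1,\dots,h_n$.

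Third, and this is the real content, I would prove that $\Phi$ converges on a neighbourhood of $p$. The decisive point is that \emph{there are no small divisors}: on non-resonant monomials the eigenvalues $\beta_i-\alpha_i$ are nonzero integers, hence bounded below by $1$ in absolute value, so the homological operators admit a uniform bound for their inverses. Consequently a Poincaré--Dulac-type majorant and Cauchy-estimate argument on a sequence of shrinking polydiscs closes: the successive corrections $\chi_k$ decay geometrically and $\Phi$ defines a genuine holomorphic symplectomorphism near $p$ (alternatively one may appeal to the later refinements of Ito and of Stolovitch--Zung asserting that a holomorphic integrable system at a non-degenerate singularity has a convergent Birkhoff normalization). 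Pulling everything back, the resulting holomorphic coordinates $(p_1,\dots,p_n,q_1,\dots,q_n)$ satisfy $\omega=\sum_i dp_i\wedge dq_i$ and $A$ is the analytic algebra generated by $h_i=p_iq_i$, as claimed. I expect the main obstacle to be the bookkeeping of norms through the third stage; the auxiliary difficulty is the simultaneous solvability of the homological equations, which is exactly where the completeness of the Liouville algebra (full involutivity) is used, but the absence of small divisors keeps this a classical majorant argument rather than a KAM-type problem.
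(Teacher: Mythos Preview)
The paper does not contain a proof of this statement: it is quoted as a result of Vey \cite{Vey78} and used only as motivation for the subsequent ``unique complex model'' philosophy. There is therefore no proof in the paper to compare your proposal against.

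That said, your outline is the standard route to Vey's theorem and is essentially correct in structure. The three stages you describe---linear normalization of the Hessians via conjugacy of Cartan subalgebras in $\mathfrak{sp}(2n,\mathbb{C})$, formal Birkhoff normalization by solving the homological equations $\{h_i,\chi_k\}=\text{(error)}$ degree by degree, and convergence via a majorant argument exploiting that the non-resonant eigenvalues $\beta_i-\alpha_i$ are nonzero integers bounded away from zero---are precisely the ingredients of Vey's original proof and of the later streamlined versions (Ito, Stolovitch, Zung). Your identification of the two genuine issues is also accurate: the simultaneous solvability of the $n$ homological equations is where the full involutivity of the Liouville algebra enters, and the convergence step, while free of small divisors, still requires careful bookkeeping of Cauchy estimates on shrinking polydiscs. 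If you were to write this out in full, the main labour would be exactly there; the rest is routine.
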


From his idea of a unique model for a non-degenerate singularity in the complexes, we present a new model for geometric quantization. This model unifies the computation for the regular case and the three types of (real) non-degenerate singularities in the Williamson sense, which are equivalent in the complexes.

\subsection{Sheaf surgery for non-degenerate singularities}

Theorem \ref{thm:hamlocaltor} already unifies the geometric quantization of elliptic singularities with the regular case. We introduce some \emph{sheaf surgery} that will allow us to redefine the quantization of the hyperbolic and the focus-focus singularity to also bring together their geometric quantization.

To set up for the hyperbolic case, recall from \cite{mirandahamilton} the following result

\begin{prop}[Hamilton-Miranda, \cite{mirandahamilton}]
Let $Z$ be the neighbourhood of a hyperbolic singular point. If $\sigma \colon Z \to \bbL$ is a smooth leaf-wise flat section defined over $Z$, then $\sigma$ is Taylor flat at the singular point. That is,
\begin{equation*}
\frac{\bd^{j+k}\sigma}{\bd^j x\, \bd^k y}\biggr\rvert_{(0,0)} = 0
\quad \text{for all } j,k
\end{equation*}
\label{prop:flatat0}
\end{prop}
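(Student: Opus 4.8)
The plan is to work in the local model for a hyperbolic singularity given by Theorem~\ref{thm:directproduct}: coordinates $(x,y)$ on a neighbourhood $Z$ of the origin in $\bbR^2$, symplectic form $\omega=dx\wedge dy$, moment map $f=xy$, and prequantum line bundle with a connection whose potential $1$-form is the Liouville form $\Theta_h$ associated to the cotangent model of Theorem~\ref{prop:cotangentmodelswithconnection}. The Hamiltonian vector field is $X=X_f=-x\,\partial_x+y\,\partial_y$ (up to sign conventions), so a leaf-wise flat section $\sigma\colon Z\to\bbL$, regarded as a $\bbC$-valued function after trivializing, satisfies a first-order linear PDE $\nabla_X\sigma=0$, i.e. $(-x\,\partial_x+y\,\partial_y)\sigma = c(x,y)\,\sigma$ for the explicit smooth function $c$ coming from contracting $\Theta_h$ with $X$. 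The leaves of the foliation near $0$ are the level sets $\{xy=\mathrm{const}\}$, and the singular leaf through $0$ is the union of the two coordinate axes.

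First I would set up the ODE satisfied by $\sigma$ along the integral curves of $X$. An integral curve emanating near the $x$-axis has the form $t\mapsto (x_0 e^{-t}, y_0 e^{t})$, so restricting the flatness equation to such a curve gives an ODE in $t$ whose coefficient is bounded (indeed the relevant term from $\Theta_h(X)$ is essentially $f=xy=x_0y_0$, constant along the leaf). The key point is that as $t\to+\infty$ the curve accumulates on a point of the $x$-axis (part of the singular leaf), and the flow expands in the $y$-direction by $e^{t}$; this exponential stretching is what forces derivatives to vanish. Concretely, I would argue by induction on the total order $j+k$: differentiate the flatness equation $\nabla_X\sigma=0$ repeatedly, evaluate along the axes, and observe that $\partial_x^j\partial_y^k\sigma$ restricted to the $x$-axis satisfies a linear ODE in $x$ (obtained from $-x\,\partial_x$) which, combined with smoothness up to the singular point where the two axes cross, forces the value at $(0,0)$ to be zero. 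A cleaner packaging of the same idea: along the expanding direction the flow $\phi_t$ sends a neighbourhood of $0$ into thinner and thinner strips, and $\sigma$ being flat means $|\sigma|$ is governed along leaves by $|\exp(\int c)|$; boundedness of $\sigma$ on the compact closure $\bar Z$ together with the fact that the leaf limits onto the singular axis from which one can flow in \emph{both} hyperbolic directions pins down all Taylor coefficients.

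The main obstacle, and the step requiring care, is handling the interaction of the two branches of the hyperbolic singular leaf at the crossing point $(0,0)$: flatness along the $x$-axis controls $\partial_y$-derivatives poorly and flatness along the $y$-axis controls $\partial_x$-derivatives poorly, so neither branch alone gives the full statement. The resolution is to use both: flatness along leaves near the $x$-axis, after taking $\partial_x^j$ and letting the flow run, shows $\partial_x^j\sigma$ vanishes to all orders in the transverse variable along that axis, hence in particular $\partial_x^j\partial_y^k\sigma(0,0)$ is determined; symmetrically for the $y$-axis; the two conclusions are consistent at the corner precisely because $\sigma$ is smooth there, and together they kill every mixed partial. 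I would also need the explicit form of $c$: with $\Theta_h = h\,db$ in the hyperbolic coordinates $(h,b)$ of Theorem~\ref{prop:cotangentmodelswithconnection}, one checks $\Theta_h(X_f)$ reduces to (a multiple of) $f=xy=h$, so the coefficient in the ODE along a leaf is the constant value of $f$ on that leaf, which vanishes on the singular leaf and is small on nearby leaves; this is the technical input that makes the exponential-contraction estimate work and shows there is \emph{no} obstruction to the vanishing (in contrast to what one might fear if the coefficient blew up near $0$). Finally I would record that Taylor-flatness at $0$ is exactly the statement $\partial^{j+k}\sigma/\partial^j x\,\partial^k y|_{(0,0)}=0$ for all $j,k$, completing the proof.
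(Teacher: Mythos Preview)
The paper does not prove this proposition; it is quoted verbatim from \cite{mirandahamilton} and used as input for Corollary~\ref{cor:flatsectionshyperbolic} and Lemma~\ref{lem:sheafhyperbolic}. So there is no proof in this paper to compare your proposal against.

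As for the proposal itself, the strategy is sound and is essentially the one in \cite{mirandahamilton}: in the local model the flatness equation reads $(-x\,\partial_x+y\,\partial_y)\sigma = i\,xy\,\sigma$ (your computation $\Theta_h(X_f)=h=xy$ is correct), and repeatedly differentiating this identity and restricting to the coordinate axes produces linear ODEs in one variable whose smooth solutions force the Taylor coefficients at the origin to vanish. Your write-up, however, stays at the level of a plan and does not exhibit the actual mechanism that kills the coefficients. For example, differentiating once in $y$ and setting $y=0$ gives, for $u(x):=\partial_y\sigma|_{y=0}$, the ODE $-x\,u'+u = i x\,\sigma(0,0)$ (using that $\sigma$ is constant along each axis); the general solution contains a term $-i\,\sigma(0,0)\,x\ln|x|$, so smoothness at $x=0$ forces $\sigma(0,0)=0$. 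Iterating this with higher $\partial_x^j\partial_y^k$ is what yields the full Taylor-flatness, and is the concrete content behind your phrase ``smoothness up to the singular point \dots forces the value at $(0,0)$ to be zero.'' The dynamical language you use (exponential stretching, boundedness on $\bar Z$) is a correct heuristic for the same phenomenon, but the direct ODE computation along the axes is both shorter and what actually closes the argument; your sketch would benefit from carrying out at least the base step explicitly.
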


\begin{cor}
The only leaf-wise flat analytic section in a semi-local neighbourhood of a hyperbolic singularity is the zero section.
\label{cor:flatsectionshyperbolic}
\end{cor}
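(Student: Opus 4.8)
The plan is to derive the corollary directly from Proposition~\ref{prop:flatat0}. Let $\sigma \colon Z \to \bbL$ be a leaf-wise flat \emph{analytic} section on a semi-local neighbourhood of the hyperbolic singular point, which we take to be the origin in coordinates $(x,y)$ with moment map $f_h = xy$ and $\omega = dx\wedge dy$. Since $\sigma$ is in particular smooth and leaf-wise flat, Proposition~\ref{prop:flatat0} applies and tells us that all partial derivatives
$$
\frac{\bd^{j+k}\sigma}{\bd^j x\,\bd^k y}\biggr\rvert_{(0,0)} = 0
\quad\text{for all }j,k\geq 0 .
$$
In other words, $\sigma$ is flat (in the sense of vanishing to infinite order) at the origin.

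The second and final step is to invoke analyticity. Trivialising $\bbL$ near the origin, $\sigma$ is represented by an analytic $\bbC$-valued function on a neighbourhood of $0\in\bbR^2$; its Taylor series at the origin converges to $\sigma$ on some polydisc. But the previous step shows every Taylor coefficient vanishes, so that convergent series is identically zero, hence $\sigma\equiv 0$ on a neighbourhood of the origin. Finally, an analytic function on a connected neighbourhood that vanishes on an open subset vanishes identically, so $\sigma$ is the zero section on all of $Z$ (shrinking $Z$ if necessary so that it is connected). This establishes the corollary.

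I do not expect any genuine obstacle here: the corollary is essentially the observation that "Taylor flat plus analytic implies zero," and all the real work — showing that a smooth leaf-wise flat section near a hyperbolic point must be Taylor flat — has already been done in Proposition~\ref{prop:flatat0}. The only points requiring a modicum of care are purely bookkeeping: checking that the statement of Proposition~\ref{prop:flatat0} transfers verbatim to an analytic section (it does, since analytic sections are smooth), and ensuring the neighbourhood $Z$ is taken small enough to be connected so that the identity principle for real-analytic functions gives vanishing on all of $Z$ rather than merely near the singular point. Neither of these is substantive, so the proof will be short.
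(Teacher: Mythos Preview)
Your proof is correct and follows essentially the same route as the paper: apply Proposition~\ref{prop:flatat0} to get Taylor flatness at the singular point, then use analyticity (via the identity principle) to conclude the section vanishes on the whole connected neighbourhood. The paper's version is terser but the logical content is identical.
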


\begin{proof}
By Theorem \ref{prop:flatat0}, leaf-wise flat analytic sections in a neighbourhood of a singularity are the zero sections. Since the analytic extension of the zero section is zero, the only leaf-wise flat analytic section in a semi-local neighbourhood of a hyperbolic singularity is the zero section.
\end{proof}

Now, we construct a special sheaf by changing the sheaf of smooth flat sections in a neighbourhood on a hyperbolic singularity.

\begin{lem}
Let $p\in(M,\omega,F)$ be a non-degenerate singular point of hyperbolic type on an integrable system defined on a symplectic manifold. Suppose that it is equipped with a complex line bundle $\bbL$ and a connection $\nabla$ whose curvature is $\omega$. Then, the sheaf of leaf-wise smooth flat sections is still a sheaf when a in a neighbourhood of $p$ smooth sections are required to be analytic. We denote this sheaf by $\cJ_h$.
\label{lem:sheafhyperbolic}
\end{lem}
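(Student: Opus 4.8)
\textbf{Proof plan for Lemma \ref{lem:sheafhyperbolic}.}

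The plan is to verify directly that the assignment $U \mapsto \cJ_h(U)$, where $\cJ_h(U)$ consists of those smooth leaf-wise flat sections over $U$ that are \emph{analytic} on the portion of $U$ lying in a fixed neighbourhood $Z$ of the hyperbolic point $p$ (and merely smooth and leaf-wise flat elsewhere), satisfies the two sheaf axioms of Definition \ref{def:sheaf}. The first point is that $\cJ_h$ is well defined as a presheaf: the restriction maps are inherited from $\cJ$, and if $\sigma$ is analytic on $U \cap Z$ then its restriction to any $V \subset U$ is analytic on $V \cap Z$, so the analyticity requirement is stable under restriction. Thus $\cJ_h$ is a subpresheaf of $\cJ$.

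Next I would check the gluing axiom. Given open sets $U$, $V$ and sections $\sigma \in \cJ_h(U)$, $\tau \in \cJ_h(V)$ agreeing on $U \cap V$, the sheaf property of $\cJ$ already produces a unique smooth leaf-wise flat $\rho \in \cJ(U \cup V)$ restricting to $\sigma$ and $\tau$. The only thing to verify is that $\rho$ is analytic on $(U \cup V) \cap Z$. This is a local condition: near any point of $(U \cup V) \cap Z$, $\rho$ equals either $\sigma$ or $\tau$ (or both, on the overlap), each of which is analytic there; analyticity is a local property, so $\rho$ is analytic on $(U\cup V)\cap Z$ and hence lies in $\cJ_h(U\cup V)$. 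The separation axiom is immediate because $\cJ_h \subset \cJ$ and $\cJ$ already satisfies it: two sections of $\cJ_h(U\cup V)$ with equal restrictions to $U$ and $V$ are equal as sections of $\cJ$, hence equal in $\cJ_h(U\cup V)$.

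I do not expect a genuine obstacle here, since the content of the lemma is essentially that ``being analytic on a fixed open set'' is a condition defined by local data and closed under restriction, which are exactly the features needed to cut out a subsheaf of a given sheaf. The one point that deserves care is the interface between $Z$ and its complement: on a section of $\cJ_h$ defined over a set $U$ straddling $\bd Z$, we require analyticity only on $U\cap Z$ and only smoothness outside, so one should confirm that gluing never forces analyticity to propagate across $\bd Z$ in an inconsistent way — it does not, precisely because the requirement is imposed on the fixed open set $Z$ rather than on a moving domain. (It is worth noting, via Corollary \ref{cor:flatsectionshyperbolic}, that the stalk of $\cJ_h$ at $p$ is trivial, which is the whole point of the construction: the infinite-dimensional smooth contribution of Theorem \ref{th:cohomology2dcompact} is killed.) Hence $\cJ_h$ is a sheaf.
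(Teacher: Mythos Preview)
Your proposal is correct and follows essentially the same approach as the paper: both arguments observe that $\cJ_h$ is a subpresheaf of $\cJ$ and that the additional analyticity constraint on the fixed neighbourhood $Z$ is a \emph{local} condition, so the gluing and separation axioms are inherited from $\cJ$. Your version is in fact more systematic than the paper's rather terse proof, which also invokes Eliasson's linearization to note that the flat-section equation has analytic coefficients and appeals to Corollary~\ref{cor:flatsectionshyperbolic} in the same way you do at the end.
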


\begin{proof}
Analytic sections are a subclass of smooth sections. Observe that the process consists of two steps. Fisrt, thanks to the system linearization around a singularity of Eliasson \cite{Eliasson90}, the flat sections equation $\nabla_X\sigma=0$ is formed by analytic data (the field $X=x\frbd{x}-y\frbd{y}$ in dimension $2$) and the solutions sections are analytic. Requiring smooth sections to be analytic in a small neighbourhood does not change the fact that the intersection works well. This is because we are extending a piece where the sections are required to be zero and by Corollary \ref{cor:flatsectionshyperbolic}) this is compatible, since analyticity is a local condition. Therefore, conditions for being a sheaf (recall Definitions \ref{def:presheaf} and \ref{def:sheaf}) are satisfied.
\end{proof}

To set up for the focus-focus case, we need another kind of sheaf manipulation. In this case, we define a new sheaf by replacing a section in a neighbourhood.

\begin{lem}
Let $p\in(M,\omega,F)$ be a non-degenerate singular point of focus-focus type on an integrable system defined on a symplectic manifold. Suppose that it is equipped with a complex line bundle $\bbL$ and a connection $\nabla$ whose curvature is $\omega$. Then, the sheaf cohomology of leaf-wise flat smooth sections is still a sheaf cohomology when the cohomology of a neighbourhood of $p$ is replaced by the sheaf cohomology of a cylinder in that neighbourhood. We denote this sheaf by $\cJ_f$.
\label{lem:sheaffocus}
\end{lem}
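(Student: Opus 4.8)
The plan is to mirror the structure of the proof of Lemma~\ref{lem:sheafhyperbolic}, but since here we are manipulating the \emph{cohomology} of a neighbourhood rather than the sections themselves, the first task is to make precise what ``replacing the cohomology of a neighbourhood by the cohomology of a cylinder in that neighbourhood'' means as an operation on the sheaf $\cJ$. Concretely, I would fix a saturated neighbourhood $U$ of the focus-focus fiber through $p$; by Theorem~\ref{th:ffquant} (Miranda--Presas--Solha) the focus-focus contribution to $H^1(U,\cJ)$ is the infinite-dimensional space $(C^\infty(\bbR;\bbC))^{n_f}$, whose generators — as described in \cite{solha} and recalled in Section~\ref{sec:background} — restrict to flat sections supported near a cylinder sitting inside the singular leaf. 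The new sheaf $\cJ_f$ is defined to agree with $\cJ$ outside $U$ and, on $U$, to be the subsheaf generated by the flat sections on the embedded cylinder $\bbR\times S^1 \subset U$ (equivalently, one truncates the Taylor data along the nodal directions, exactly as $\cJ_h$ does in the hyperbolic case).

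The key steps, in order, would be: (i) write down the local normal form of the focus-focus singularity from Theorem~\ref{thm:locallinearization} / Theorem~\ref{thm:directproduct}, identify the $S^1$ action coming from $\rho^f$ in Lemma~\ref{lem:cotangentmodels}, and exhibit the distinguished cylinder $C \subset U$ as (a neighbourhood of) an orbit of that $S^1$ together with the transverse action direction; (ii) observe that the flat-section equation $\nabla_X\sigma = 0$ along the leaves, when written in the coordinates of the cotangent model, splits so that the restriction to $C$ carries all the cohomological content — the transverse (nodal) directions contribute only Taylor-flat data by an argument parallel to Proposition~\ref{prop:flatat0}; (iii) \emph{check the two sheaf axioms} of Definition~\ref{def:sheaf} for $\cJ_f$: the gluing axiom and the separation axiom. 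As in the proof of Lemma~\ref{lem:sheafhyperbolic}, the crucial point is that the modification is purely local and that on overlaps with the ``outside'' region the new local sections still glue, because the flat sections that are being discarded are precisely those that vanish to infinite order near the node (the analogue of Corollary~\ref{cor:flatsectionshyperbolic} for the focus-focus case), so restriction maps remain well-defined and compatible.

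The main obstacle I expect is step (iii) together with making step (i)--(ii) genuinely rigorous: unlike the hyperbolic case, where ``analytic'' is a clean, intrinsically defined local condition that obviously passes to a sheaf, here the replacement is phrased at the level of \emph{cohomology} of a neighbourhood, and one must verify that the assignment $V \mapsto \cJ_f(V)$ really is a presheaf satisfying the sheaf conditions rather than merely an ad hoc modification of $H^1$. The delicate technical point is the behaviour on an overlap $V_1 \cap V_2$ where $V_1 \subset U$ meets the cylinder and $V_2$ lies in the regular region: one needs that a section of $\cJ_f(V_1)$ and a section of $\cJ(V_2)$ agreeing on the overlap patch together to an element of $\cJ_f(V_1\cup V_2)$, and that two such global sections restricting equally to $V_1$ and $V_2$ coincide. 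I would settle this by invoking the explicit description of leaf-wise flat sections near a focus-focus fiber from \cite{solha} and \cite{mirandapresassolha}, showing that the ``holonomy obstruction'' around the loops in the singular leaf is concentrated on the $S^1$ factor realized by $\rho^f$, so that the cylinder genuinely captures the full holonomy and the discarded data (the infinite jets along the nodal directions) form a subsheaf whose removal is compatible with restriction. Once the holonomy computation is in place, the verification of Definitions~\ref{def:presheaf} and~\ref{def:sheaf} is then routine, exactly parallel to Lemma~\ref{lem:sheafhyperbolic}.
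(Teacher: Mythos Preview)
Your plan takes a different route from the paper's. The paper does not attempt to realize $\cJ_f$ as a subsheaf of $\cJ$ by peeling off Taylor-flat data along the nodal directions. Instead it performs a direct ``puzzle-piece'' replacement: on a local neighbourhood $V$ of $p$ inside the saturated neighbourhood $U$, it simply \emph{declares} the flat sections to be those of the $2$-dimensional cylinder model, $\sigma=a(t)e^{it\theta}$ with $\omega=dt\wedge d\theta$ (as in Section~\ref{sec:sheafcyl} and Appendix~\ref{sec:cohomcyl}), observes that the remaining leaves in $U$ are already regular tori so this propagates by continuity to all of $U$, and then argues that the resulting local cohomology glues to the global one via the Mayer-Vietoris formula of \cite{MirandaPresas15} because the focus-focus singularity is isolated and the complement of $V$ is topologically a cylinder. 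No splitting of the flat-section equation and no explicit verification of the axioms in Definition~\ref{def:sheaf} is carried out; the consistency is handled at the cohomological level through Mayer-Vietoris.

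Your approach is more principled but has a genuine weak point in step~(ii): the analogy with Proposition~\ref{prop:flatat0} does not transfer cleanly. The infinite-dimensional focus-focus contribution in Theorem~\ref{th:ffquant} is $(C^\infty(\bbR;\bbC))^{n_f}$ --- a space of smooth functions of one real variable --- not a space of formal Taylor series $\bbC^{\bbN}$ as in the hyperbolic case of Theorem~\ref{th:cohomology2dcompact}. Hence the mechanism that kills the hyperbolic contribution (impose analyticity so that Taylor-flat forces zero, Corollary~\ref{cor:flatsectionshyperbolic}) has no direct analogue, and the claim that the ``discarded data (the infinite jets along the nodal directions) form a subsheaf'' is not obviously true. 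The paper avoids this issue entirely by \emph{replacing} rather than \emph{restricting}, trading an honest subsheaf construction for an appeal to Mayer-Vietoris and the isolation of $p$. If you want your route to succeed you would need a sharper structural description of the focus-focus flat sections from \cite{solha} than the holonomy observation you sketch.
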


\begin{proof}
Consider a semi-local neighbourhood $U$ of the singularity of focus-focus type $p\in(M,\omega,F)$. Apart from the singular focus-focus leaf $\cL$ containing $p$, the rest of the leaves of $U$ are regular tori. In the intersection of $\cL$ and a local neighbourhood $V$ of $p$ change the cohomology of the focus-focus leaf presented in \cite{mirandapresassolha} by the sheaf cohomology of the torus.

Explicitly, take $\omega=dt\wedge d\theta$ and take as flat sections the complex functions of the form $\sigma=a(t)e^{it\theta}$. For the complete construction of the cohomology, see Section \ref{sec:sheafcyl} and especially Appendix \ref{sec:cohomcyl}. The definition of these flat sections in the singular fiber automatically sets the cohomology of all the fibers in the entire $U$ into the torus cohomology. By continuity, this is a well-defined cohomology in $U$.

The cohomology glues back well because the focus-focus singularity is isolated and the topology of the complementary is glued back to a cylinder. So the pieces of the puzzle will glue back normally as in the computation of the sheaf cohomology in a neighbourhood of the torus. This is done using the Mayer-Vietoris formula in Section 4 of \cite{MirandaPresas15}. If this torus lies over an integer point of the lattice this would add a Bohr-Sommerfeld leaf to the computation.
\end{proof}

\begin{rem}
Observe that this argument works whenever there are several pinched nodes in the singular focus-focus fiber. In this case, the total count does not depend on the number of nodes of the multiple pinched torus. At each node of the pinched torus we redo the puzzle argument above to conclude.
\end{rem}

In practice, we do an interchange of a single cohomology piece, a focus-focus cohomology piece by a torus cohomology piece which is so close that the substitution can be done smoothly (see Figure \ref{fig:replacement}). This allows to define the geometric quantization of the focus-focus singularity as the quantization of the regular case.

We can think of the sheaf-theoretical approach as a puzzle construction where the manifold comes endowed with an adapted covering admitting local data (sections and polarization) and the invariants associated to the singularity, in this case the focus-focus singularity.

\begin{figure}[ht!]
    \centering
    \includegraphics[scale=0.12]{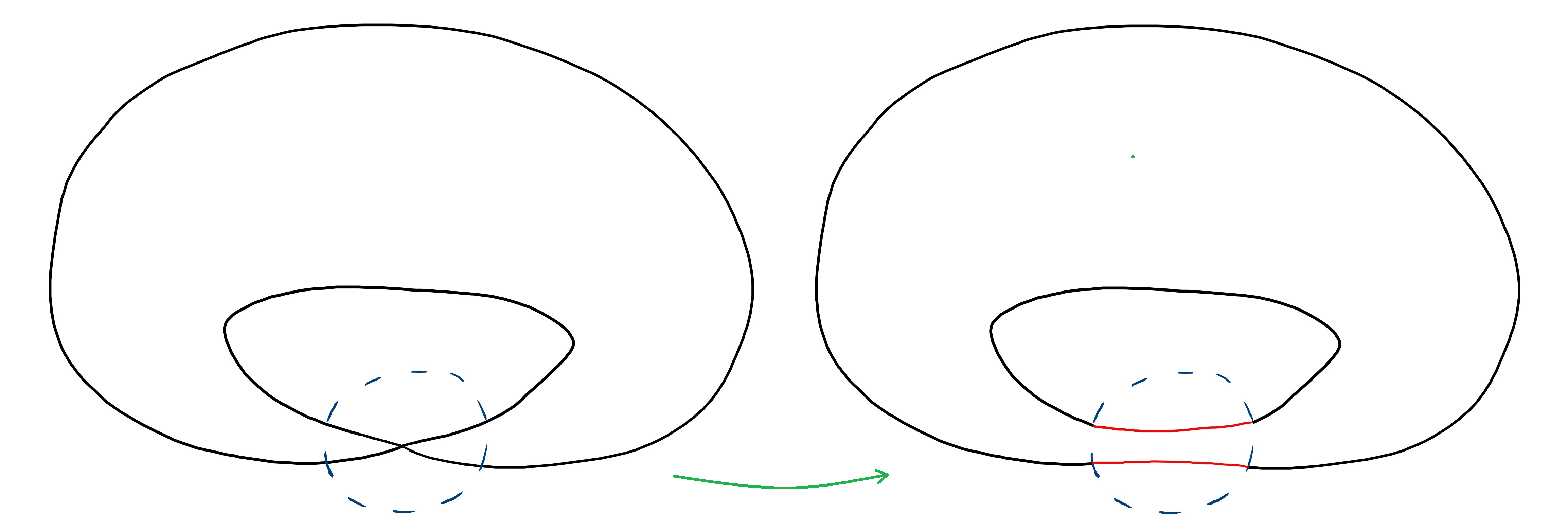}
    \caption{At practice, the model replaces the cohomology in a neighbourhood of the focus-focus singular point by the cohomology of the torus, like pieces of a puzzle.}
    \label{fig:replacement}
\end{figure}

The attachment of the new piece keeps the global cohomology of the neighbouring fibers well-defined. Understanding the sheaf cohomology computation in \cite{mirandapresassolha} under the computation kit provided in \cite{MirandaPresas15}, we can simply reconstruct the sheaf computation in the focus-focus case replacing the "puzzle piece" containing the focus-focus singularities by the torus piece. In practice, this will behave as replacing the puzzle by a red piece (regular cylinder) and gluing it back to the sheaf.

\subsection{Geometric quantization for non-degenerate singularities}

We propose a new model for geometric quantization which has which is convenient for non-degenerate singularities of both elliptic and hyperbolic type of certain characteristics. First, we introduce, the simple version for manifolds of dimension $2$ and later we generalize it to bigger dimensions. In dimension $2$, our model can be stated as follows.

\begin{thm}[Geometric quantization of non-degenerate singularities in dimension $2$]
Let $(M^2,\omega)$ be a symplectic $2$-manifold and suppose it is equipped with a non-degenerate singular Lagrangian fibration, with singularities of only elliptic and hyperbolic type, with a prequantization line bundle $\bbL$ and with connection $\nabla$. Let $\cJ$ be the sheaf of leaf-wise flat sections of $\bbL$. Then, the cohomology groups $H^k(M;\cJ)$ are zero for all $k \neq 1$, and $$H^1 (M;\cJ) \cong \bigoplus_{b \in BS} \bbC,$$ where the sum is taken over all non-singular \BS fibres.
\label{thm:gqnondegsingdim2}
\end{thm}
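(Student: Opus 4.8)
The strategy is to reduce the global statement to the already-established local results via the sheaf-theoretic machinery, exactly as in Hamilton's original argument for the locally toric case (Theorem~\ref{thm:hamlocaltor}). First I would invoke the Eliasson-Miranda-Zung linearization (Theorem~\ref{thm:locallinearization}) together with the cotangent block decomposition (Theorem~\ref{thm:directproduct}) to cover $M$ by three types of open sets: neighbourhoods of regular leaves, neighbourhoods of elliptic singular points, and neighbourhoods of hyperbolic singular points. For each type we need the local sheaf cohomology: on regular pieces \'Sniatycki's theorem (Theorem~\ref{th:sniat}) gives the answer in terms of \BS leaves and concentrated in degree $1$; for elliptic pieces Hamilton's computation (contained in Theorem~\ref{thm:hamlocaltor}) gives the same, with no contribution from the elliptic point itself; for hyperbolic pieces the crucial input is the \emph{sheaf surgery} of Lemma~\ref{lem:sheafhyperbolic}: replacing $\cJ$ near a hyperbolic point by $\cJ_h$ (sections required to be analytic in a small neighbourhood), so that by Corollary~\ref{cor:flatsectionshyperbolic} the only flat analytic section near the singularity is zero, which forces the local cohomology of a hyperbolic piece to vanish entirely.

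Second, having the local data, I would patch the pieces together using the Mayer-Vietoris argument for sheaf cohomology from \cite{MirandaPresas15} (and the K\"unneth formula for the finitely many $2$-dimensional blocks, although in dimension $2$ the Mayer-Vietoris step alone suffices). Concretely: order the finitely many singular points, take a good cover of $M$ refining the chart decomposition above, and run the spectral sequence / long exact sequence. Because every local piece has cohomology concentrated in degree $1$ and the degree-$0$ groups are easy to control (a leaf-wise flat section of $\bbL$ over a connected open set is determined by a single constant along each leaf, and gluing forces global leaf-wise flat sections to vanish unless $M$ is a single \BS torus, which a $2$-manifold with singularities is not), the Mayer-Vietoris sequence degenerates and yields $H^k(M;\cJ)=0$ for $k\neq 1$ and $H^1(M;\cJ)\cong\bigoplus_{b\in BS}\bbC$, the hyperbolic points contributing nothing thanks to the surgery.

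Third, one must check the consistency of the surgery with gluing: the sheaf $\cJ_h$ agrees with $\cJ$ away from an arbitrarily small neighbourhood of each hyperbolic point (analyticity is a strictly local requirement there), so replacing $\cJ$ by the globally-defined sheaf $\cJ'$ which equals $\cJ_h$ near hyperbolic points and $\cJ$ elsewhere does not affect the cohomology of the regular and elliptic pieces nor of the overlaps, which lie in the regular region. This is precisely the content of Lemma~\ref{lem:sheafhyperbolic}, and it is the step on which the whole finite-dimensionality rests.

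\textbf{Main obstacle.} I expect the delicate point to be not the local computations — those are quoted — but verifying that the sheaf surgery glues correctly on \emph{overlaps}, i.e.\ that no spurious cohomology is introduced at the boundary between the ``analytic'' hyperbolic chart and the adjacent ``smooth'' regular charts, and that the Mayer-Vietoris sequence really does collapse so that the hyperbolic pieces decouple from the \BS count. One must make sure the cover can be chosen so that each hyperbolic neighbourhood meets only regular neighbourhoods (no two singular charts overlap) and that on those overlaps the two sheaves literally coincide; then the vanishing of the local hyperbolic cohomology propagates cleanly through the long exact sequence. Establishing that the resulting $H^1$ is exactly (not merely bounded by) $\bigoplus_{b\in BS}\bbC$ requires the surjectivity part of \'Sniatycki's / Hamilton's argument — realizing each \BS leaf by an actual cocycle — which should carry over verbatim since the surgery only kills classes, never creates them.
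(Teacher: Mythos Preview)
Your proposal is correct and follows essentially the same route as the paper: use the modified sheaf $\cJ_h$ of Lemma~\ref{lem:sheafhyperbolic} near hyperbolic points, the cylinder/Hamilton model near elliptic points, and the toric model on regular pieces, then assemble. The paper's proof is considerably terser---it simply declares the local sheaves and states that the cohomology is the sum over \BS leaves---whereas you spell out the Mayer-Vietoris gluing and the consistency check on overlaps that the paper leaves implicit; but the architecture and the key inputs are identical.
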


\begin{proof}
At the neighbourhood of the singular fibers, take the corresponding sheaf $\cJ$ of leaf-wise flat sections. Namely, take the sheaf of the cylinder and the corresponding cohomology (as in Appendix \ref{sec:cohomcyl}) if it is a singular elliptic fiber and take the sheaf $\cJ=\cJ_h$ of Lemma \ref{lem:sheafhyperbolic} and the corresponding cohomology if it is a singular hyperbolic fiber. In the regular components, just take the usual toric sheaf cohomology.

The quantization is computed as the sum of the cohomology groups, which are $0$ for $k\neq n$ and $H^n (M;\cJ) \cong \bigoplus_{b \in BS} \bbC$ where the sum is taken over all non-singular \BS fibres.
\end{proof}

For systems in higher-dimensional manifolds, we can still quantize both elliptic and hyperbolic singularities and even a merging of them. Nevertheless, we need that for each singularity of rank $k$, its Williamson type is $(k_e,k_h,0)$ with $k_h\leq 1$. We also have to require compactness of the manifold to apply a Künneth Formula for the product sheaf.

\begin{thm}[Geometric quantization of non-degenerate systems without focus-focus singularities]
Let $(M^{2n},\omega)$ be a compact symplectic $2n$-manifold and suppose it is equipped with a non-degenerate singular Lagrangian fibration, with singularities of rank $k$ and Williamson type $(k_e,k_h,0)$ with $k_h\leq 1$. Suppose it is also equipped with a prequantization line bundle $\bbL$ and with connection $\nabla$. Let $\cJ$ be the product sheaf of leaf-wise flat sections of $\bbL$. Then, the cohomology groups $H^k(M;\cJ)$ are zero for all $k \neq n$, and $$H^n (M;\cJ) \cong \bigoplus_{b \in BS} \bbC,$$ where the sum is taken over all non-singular \BS fibres.
\label{thm:gqnondegsing}
\end{thm}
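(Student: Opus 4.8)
The plan is to deduce the statement from the two-dimensional case (Theorem~\ref{thm:gqnondegsingdim2}) by combining the local block decomposition of a non-degenerate singularity with the Künneth formula of \cite{MirandaPresas15}, and then to globalize by a Mayer--Vietoris argument that uses the compactness of $M$. First I would fix a good, fibration-adapted finite cover $\{U_\alpha\}$ of $M$: around each singular fiber take a saturated neighbourhood small enough that it contains exactly one singular fiber and that the local normal form of Theorem~\ref{thm:directproduct} (and Theorem~\ref{thm:directproductcotangent}) applies, and cover the regular part by saturated neighbourhoods over which \'Sniatycki's theorem (Theorem~\ref{th:sniat}) applies; since $M$ is compact this cover is finite and may be arranged so that all pairwise intersections are regular saturated pieces. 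Over a regular piece \'Sniatycki gives $H^q(\,\cdot\,;\cJ)=0$ for $q\neq n$ and $H^n(\,\cdot\,;\cJ)\cong\bigoplus_b\bbC$ over the non-singular \BS leaves inside it, so the real content is the computation over a neighbourhood of a singular fiber.

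For a singular fiber through a point $p$ of rank $k$ and Williamson type $(k_e,k_h,0)$, Theorem~\ref{thm:directproduct} writes a saturated neighbourhood $U$, up to the quotient by the finite group $\Gamma$ of Theorem~\ref{th:MirandaZung}, as a product of $k+k_e+k_h=n$ two-dimensional blocks: $k$ regular, $k_e$ elliptic, and $k_h\leq 1$ hyperbolic. On each $2$-block I would invoke the already known answer — the cylinder computation of Section~\ref{sec:sheafcyl} and Appendix~\ref{sec:cohomcyl} for a regular block, Hamilton's theorem (Theorem~\ref{thm:hamlocaltor}) for an elliptic block, and Theorem~\ref{thm:gqnondegsingdim2} together with the surgered sheaf $\cJ_h$ of Lemma~\ref{lem:sheafhyperbolic} for the hyperbolic block — obtaining in each case cohomology concentrated in degree $1$ and equal to $\bigoplus_b\bbC$ over the non-singular \BS circles of that block. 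Applying the Künneth isomorphism of \cite{MirandaPresas15} iteratively to the product of these $n$ blocks (each factor admitting a good covering and having finite-dimensional quantization) yields cohomology concentrated in degree $n$, with $H^n(U;\cJ)\cong\bigotimes_{i=1}^n H^1(\mathrm{block}_i;\cJ_i)\cong\bigoplus_b\bbC$, the sum now running over $n$-tuples of block-\BS-circles, i.e.\ over the non-singular \BS fibers contained in $U$; taking $\Gamma$-invariants leaves this unchanged when $k_h\leq 1$, since then $\Gamma$ is trivial or $\bbZ_2$ acting only in the single hyperbolic direction. Feeding these local computations into the Mayer--Vietoris machinery of Section~4 of \cite{MirandaPresas15} over the finite cover, and using that the \BS leaves are isolated so that each contributes exactly one $\bbC$ with no cancellation across overlaps, gives $H^k(M;\cJ)=0$ for $k\neq n$ and $H^n(M;\cJ)\cong\bigoplus_{b\in BS}\bbC$ over all non-singular \BS fibers of $M$.

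The main obstacle I anticipate is the careful checking of the hypotheses of the Künneth formula: the two-dimensional blocks are not literally compact, so one must either replace them by their closed saturated versions (with compact fibers, so that \'Sniatycki-type vanishing still holds) or arrange the decomposition so that a genuinely compact factor carries the finite-dimensionality hypothesis; and one must verify that the product sheaf $\cJ$ — in particular the surgery $\cJ_h$, defined through an analyticity requirement near the hyperbolic point — glues to a globally well-defined sheaf and restricts compatibly over the cover, which relies on analyticity being a local condition exactly as in Lemma~\ref{lem:sheafhyperbolic}. Finally, I would stress that the restriction $k_h\leq 1$ is precisely what keeps each singular-fiber neighbourhood a product with at most one hyperbolic factor and the quotienting group $\Gamma$ under control, so that the dimension-$2$ hyperbolic result of Theorem~\ref{thm:gqnondegsingdim2} can be imported as a building block without having to analyze the more intricate fibrations produced by several interacting hyperbolic directions.
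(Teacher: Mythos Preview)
Your proposal is correct and follows essentially the same approach as the paper: decompose a neighbourhood of each singular fiber into the product of two-dimensional blocks via Theorem~\ref{thm:directproduct}, compute each block using the dimension-$2$ results (\'Sniatycki for regular, Hamilton for elliptic, and the surgered sheaf $\cJ_h$ of Lemma~\ref{lem:sheafhyperbolic} for the single hyperbolic block), and then apply the K\"unneth formula of \cite{MirandaPresas15}. Your version is in fact more explicit than the paper's own proof---you spell out the Mayer--Vietoris globalization over a finite saturated cover, the r\^ole of the $\Gamma$-quotient, and the verification of the K\"unneth hypotheses, all of which the paper leaves implicit---and your closing explanation of why $k_h\leq 1$ is needed matches the paper's Remark following the theorem.
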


\begin{proof}
Suppose that a singularity of rank $k$ has Williamson type $(k_e,0,0)$. Then, the result of quantization for toric systems (see Theorem \ref{thm:hamlocaltor}) applies and the quantization is given by $H^n (M;\cJ) \cong \bigoplus_{b \in BS} \bbC$.

Now, suppose that a singularity of rank $k$ has Williamson type $(k_e,1,0)$. At the neighbourhood of the singular fiber, consider the decomposition in elementary blocks given by Theorem \ref{thm:directproduct}, which in this case is $\overbrace{M^{\text{reg}} \times\cdots \times M^{\text{reg}}}^{k}
\times \overbrace{M^{\text{ell}} \times \cdots \times M^{\text{ell}}}^{k_e}
\times M^{\text{hyp}}$. Because of this decomposition, the sheaf $\cJ$ of leaf-wise flat sections of the prequantum line bundle is of product type. In other words, it is the product of the following components:
\begin{itemize}
    \item the elliptic components which are modelled as in Theorem \ref{thm:hamlocaltor} (and explicitly as in Appendix \ref{sec:cohomcyl});
    \item at the hyperbolic component where the sheaf $\cJ_h$ is given in Lemma \ref{lem:sheafhyperbolic} and the corresponding cohomology, recall that this sheaf is an extension of the zero section in a neighbourhood of the singularity;
    \item and in the regular components, just take the usual toric sheaf cohomology given by \ref{th:sniat}.
\end{itemize}

Because $M$ is compact and the sheaf can be seen as a product, we can apply the Künneth formula for geometric quantization (see Theorem 3.4 in \cite{MirandaPresas15}).

Therefore, in both cases the total cohomology is the sum of the cohomology of each block, and coincides with the quantization of the toric case because the hyperbolic component does not add any contribution. Then, the quantization is computed as the sum of the cohomology groups, which are $0$ for $k\neq n$ and $$H^n (M;\cJ) \cong \bigoplus_{b \in BS} \bbC$$ where the sum is taken over all non-singular \BS fibres.
\end{proof}

\begin{rem}
By the nature of our proof, this quantization works for $k_h\leq 1$. In cases where the multiplicity of the hyperbolic component of the singularity is bigger than $1$ the situation is not that clear because the sheaf can not always be realized as a product \cite{BolsinovIzosimov}. Nevertheless, we conjecture that the theorem holds in full generality because of the \textit{local to global principle}.
\end{rem}

In short, the theorem states that geometric quantization of an integrable system with only non-degenerate singular points without focus-focus singularities is locally given by the number of regular Bohr-Sommerfeld leaves in the neighbourhood of any singular point.

Applying this mantra to the set-up of \cite{MirandaPresas15} (where \'Sniatycki's Theorem is restated in a cotangent version, see Theorem 4.2 there) and \cite{mirandapresassolha} we obtain a simplification of the quantization formulas. Concerning the focus-focus case in dimension 4, at each focus-focus component we can apply Lemma \ref{lem:sheaffocus} and Theorem \ref{th:ffquant} now becomes:

\begin{thm}
The new geometric quantization of a saturated neighborhood of a focus-focus fiber with $n$ nodes is the same as the quantization of a regular fiber. It is $0$ if it is not \BS and contributes with the addition of $\bbC$ if it is \BS.
\label{th:ffquant2}
\end{thm}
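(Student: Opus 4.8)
The plan is to obtain Theorem~\ref{th:ffquant2} as the outcome of feeding the sheaf surgery of Lemma~\ref{lem:sheaffocus} into the computation of Miranda--Presas--Solha (Theorem~\ref{th:ffquant}). Let $U$ be a saturated neighbourhood of a focus-focus fibre $\cL$ carrying $n$ nodes, and let $\cJ_f$ be the sheaf built in Lemma~\ref{lem:sheaffocus}: it coincides with the sheaf $\cJ$ of leaf-wise flat sections away from small neighbourhoods $V_1,\dots,V_n$ of the nodes, while on each $V_j\cap\cL$ it replaces the focus-focus leaf cohomology of \cite{solha,mirandapresassolha} by the sheaf cohomology of the model cylinder $\bbR\times S^1$ of Section~\ref{sec:sheafcyl}. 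First I would observe that the comparison morphism of sheaves is a stalkwise isomorphism at every point of a regular Liouville torus, so the only place the quantization can differ from the regular count is in the contribution of $\cL$ itself.

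Next I would run the Mayer--Vietoris argument of Section~4 of \cite{MirandaPresas15}, now applied to $\cJ_f$ rather than $\cJ$. Cover $U$ by the node neighbourhoods $V_j$ together with a saturated neighbourhood $W$ of a regular torus onto which $U$ minus the nodes deformation retracts; after the surgery, each $V_j$ and $W$ is, sheaf-theoretically, a neighbourhood of a regular Liouville torus, so by \'Sniatycki's theorem (Theorem~\ref{th:sniat}), or equivalently by the Künneth formula applied to the cylinder computation of Appendix~\ref{sec:cohomcyl}, its sheaf cohomology is concentrated in degree $n$ (here $n=2$, since the focus-focus block is $4$-dimensional) and equals $\bbC$ precisely when that torus is a Bohr--Sommerfeld leaf, i.e.\ lies over an integer point of the action lattice. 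Inserting these into the Mayer--Vietoris sequence yields $H^k(U;\cJ_f)=0$ for $k\neq n$, while $H^n(U;\cJ_f)\cong\bbC$ when $\cL$ sits over an integer point and $H^n(U;\cJ_f)=0$ otherwise. The Bohr--Sommerfeld criterion is imported without change: the holonomy around the vanishing cycle remains the only obstruction, and the surgered leaf keeps the lattice position of the original focus-focus fibre, so being Bohr--Sommerfeld means the same thing before and after the surgery.

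For $n>1$ I would appeal to the remark following Lemma~\ref{lem:sheaffocus} and carry out the surgery one node at a time, each step swapping a single focus-focus \emph{puzzle piece} for a cylinder piece and regluing along regular tori; since every local replacement produces the contribution of a regular fibre and all the gluing takes place over regular tori, the total count is independent of $n$ and coincides with the single-regular-fibre term of Theorem~\ref{th:sniat}. The point I expect to be most delicate is not any individual computation but the check that the surgery is genuinely \emph{local and compatible}: one has to verify that replacing the section data over $\cL\cap V_j$ by the cylinder data does not disturb the flat-section equation $\nabla_X\sigma=0$ on the surrounding regular tori (the \emph{continuity} of the replacement claimed in the proof of Lemma~\ref{lem:sheaffocus}, which ultimately rests on Eliasson's linearisation \cite{Eliasson90}), and that the Mayer--Vietoris machinery of \cite{MirandaPresas15} applies verbatim to $\cJ_f$ in place of $\cJ$. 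Once that compatibility is granted, the statement follows formally from the cylinder cohomology of Appendix~\ref{sec:cohomcyl} together with the unchanged Bohr--Sommerfeld condition, and it feeds directly into the simplified global formula replacing Theorem~\ref{th:GQalmosttoric}.
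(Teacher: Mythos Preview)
Your proposal is correct and follows essentially the same route as the paper: both obtain the result by applying the sheaf surgery of Lemma~\ref{lem:sheaffocus} and then reading off the cohomology of the resulting regular (cylinder/torus) pieces via the Mayer--Vietoris machinery of \cite{MirandaPresas15}. The paper in fact does not give a separate proof of Theorem~\ref{th:ffquant2} at all---it is stated as an immediate consequence of Lemma~\ref{lem:sheaffocus} (whose proof already contains the Mayer--Vietoris gluing)---so your write-up is a more explicit version of exactly what the paper leaves implicit; one small caution is your reuse of the symbol $n$ for both the number of nodes and the cohomological degree, which you should disambiguate.
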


This computation can be made global using again the Mayer-Vietoris formula presented in \cite{MirandaPresas15} (see Theorem 3.1. and Corollary 3.2 there) to obtain a new quantization model for almost toric manifolds. Theorem \ref{th:GQalmosttoric} becomes:

\begin{thm}
For a $4$-dimensional closed almost toric manifold $M$, with $n_r$ regular Bohr--Sommerfeld fibers and $n_f$ focus-focus Bohr--Sommerfeld fibers:
$$\mathcal{Q}(M)\cong\bbC^{n_r+n_f}.$$
\label{th:GQalmosttoric2}
\end{thm}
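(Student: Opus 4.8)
The plan is to re-run the Mayer--Vietoris argument that proves Theorem~\ref{th:GQalmosttoric} in \cite{mirandapresassolha}, but feeding in the \emph{new} local model of the focus-focus fibre provided by Theorem~\ref{th:ffquant2} (equivalently, the sheaf surgery of Lemma~\ref{lem:sheaffocus}) in place of the old infinite-dimensional one. Concretely, first I would choose an open cover of $M$ adapted to the singular Lagrangian fibration: saturated tubular neighbourhoods $U_1,\dots,U_{n_f}$ of the (possibly multi-pinched) focus-focus fibres, one per singular fibre, together with open sets $V_\alpha$ covering the complement, each of the form $I_\alpha\times\bbT^2$ with $I_\alpha$ a disk in the regular part of the base, i.e. a region where \'Sniatycki's Theorem~\ref{th:sniat} (or its cotangent restatement, Theorem~4.2 of \cite{MirandaPresas15}) applies. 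On each $U_j$ one takes the modified sheaf $\cJ_f$ of Lemma~\ref{lem:sheaffocus}, obtained by replacing the focus-focus cohomology piece by the cylinder cohomology; on the $V_\alpha$ and on all overlaps one keeps the ordinary toric sheaf of leaf-wise flat sections. Since the focus-focus points are isolated and the surgery is supported in an arbitrarily small neighbourhood of the singular leaf, the overlaps $U_j\cap V_\alpha$ lie entirely in the regular region and the local sheaves agree there, so they glue to a global sheaf $\cJ$ on $M$.

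Next I would compute $H^*(M;\cJ)$ with the Mayer--Vietoris spectral sequence of \cite{MirandaPresas15} (Theorem~3.1 and Corollary~3.2 there). The local inputs are all concentrated in top degree $n=2$: $H^*(V_\alpha;\cJ)$ and $H^*(U_j\cap V_\alpha;\cJ)$ are given by Theorems~\ref{th:sniat} and \ref{thm:hamlocaltor}, namely one copy of $\bbC$ in degree $2$ for each interior integer point of the affine base over that open set and $0$ otherwise; and $H^*(U_j;\cJ_f)$ is given by Theorem~\ref{th:ffquant2}, i.e. exactly the contribution of a regular fibre: $\bbC$ in degree $2$ if the focus-focus fibre is Bohr--Sommerfeld, and $0$ if it is not. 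Because every local contribution sits in the same degree and the overlaps patch exactly as in the purely toric/regular case, the spectral sequence degenerates, forcing $H^k(M;\cJ)=0$ for $k\neq 2$ and $H^2(M;\cJ)$ to be the direct sum of the local pieces. Counting these gives $n_r$ copies of $\bbC$ from the regular Bohr--Sommerfeld tori and one copy of $\bbC$ from each of the $n_f$ focus-focus Bohr--Sommerfeld fibres (now indistinguishable from regular ones), so $\mathcal{Q}(M)=\bigoplus_k H^k(M;\cJ)=H^2(M;\cJ)\cong\bbC^{n_r+n_f}$.

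The main obstacle I anticipate is the bookkeeping that makes this gluing legitimate: one must check that the local surgeries assemble into a genuinely well-defined global sheaf and that the connecting homomorphisms of the Mayer--Vietoris sequence on the overlaps are unaffected by the modification, so that no spurious cohomology is created in degrees $\neq 2$. This is where the fact that the replacement of Lemma~\ref{lem:sheaffocus} takes place inside a neighbourhood so small that it meets only regular tori is essential, and where one also invokes the remark after that lemma to handle a focus-focus fibre with several pinch points: the repeated local surgery (one per node) must be shown to be compatible, which reduces to the explicit cylinder computation of Appendix~\ref{sec:cohomcyl} and yields a single $\bbC$ per such fibre regardless of the number of nodes. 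Granting these compatibility checks, the vanishing outside degree $2$ and the final count are then formal consequences of all local contributions being concentrated in the top degree, exactly as in the toric case.
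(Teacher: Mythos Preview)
Your proposal is correct and follows essentially the same approach as the paper: the paper simply states that the local computation of Theorem~\ref{th:ffquant2} is globalized via the Mayer--Vietoris formula of \cite{MirandaPresas15} (Theorem~3.1 and Corollary~3.2), exactly as in the original proof of Theorem~\ref{th:GQalmosttoric} in \cite{mirandapresassolha} but with the new finite-dimensional local input replacing the old one. Your write-up is considerably more detailed than the paper's one-line justification, but the strategy---adapted cover, local contributions in top degree only, Mayer--Vietoris gluing---is the same.
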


\section{Conclusions}
\label{sec:conclusions}

The new definition of the geometric quantization of a system with non-degenerate singularities of rank $k$ and Williamson type $(k_e,k_h,k_f)$ has advantages with respect to other definitions. From the physical point of view, the most important advantage is that it associates a discrete set to a model which is semi-locally compact. This is much more natural than to associate to it a continuous set. On the other hand, since the complexifications of the real, elliptic and hyperbolic singularities are equivalent, it is reasonable that the quantization of the focus-focus and hyperbolic cases essentially coincide with the quantization of the elliptic and the regular cases. The nodal trade, which can be thought as a smooth operator, allows the compatibility of this definition with the properties of the Delzant polytope.

Our definition, also, can be used to explicitly compute the quantization in particular examples, such as the $K3$ surface. In \cite{mirandapresassolha} the authors apply Kostant's model to singularities of focus-focus type to compute the cohomology groups associated to the real geometric quantization of a neighborhood of a focus-focus fiber of a 4-dimensional semitoric integrable system. They conclude that the first cohomology group is trivial but the second is not, since it is infinite dimensional if the singular fiber is Bohr--Sommerfeld (Theorem~\ref{th:ffquant}). As an application of Theorem~\ref{th:GQalmosttoric} in a particular example, they analyze the effect of nodal trades \cite{LeSy} in a K3 surface, which is an almost toric manifold.

In their example, a K3 surface is built from two copies of a symplectic and toric blow-up of the complex projective plane at 9 different points, applying nodal trades to all of their elliptic-elliptic singular fibers and taking their symplectic sum along the symplectic tori corresponding to the preimage of the boundary of their respective bases. This construction provides a K3 surface with 24 Bohr--Sommerfeld focus-focus fibers (see Figure \ref{fig:K3}). For this surface, they obtain that its quantization, with the previous model, is 
$\mathcal{Q}(K3)\cong\bbC^{26}\oplus\left( C^\infty(\bbR;\bbC)\right)^{24}$.

Then, the real geometric quantization of the K3 surface with the previous model from \cite{mirandapresassolha} is essentially different from the Kähler case, which is always finite dimensional. In the particular example we are considering, the dimension of its Kähler quantization is ${1}/{2}\cdot c_1(L)^2+2={1}/{2}\cdot(2\cdot 24+2\cdot 24)+2=50$, which is the same dimension given by our proposed model.

However, both models (Kähler quantization and the one provided in \cite{mirandapresassolha}) have something in common: The total number of Bohr-Sommerfeld leaves (26 regular +24 singular in \cite{mirandapresassolha}) coincides with the dimension 50 in the Kähler quantization.

In our model we are exchanging each of the infinity contributions from the focus-focus Bohr-Sommerfeld leaves by a finite dimensional contribution. Therefore, in our model, the representation space is
$\mathbb{C}^{50}$ and it coincides with the dimension provided by the Kähler quantization count.

Thus, a remarkable point of the K3 example is that our model for geometric quantization does coincide with the Kähler quantization, validating our new model and correcting the infinite dimensional contributions in \cite{mirandapresassolha}.

Other models for the quantization of K3 have been obtained from the perspective of Berezin-Toeplitz Quantization \cite{CastejonThesis}. In those cases, the representation space is the same as in the Kähler case, since one uses one of the compatible Kähler structures associated to the hyperkähler structure of K3. The Berezin-Toeplitz operators for the hyperkähler K3 in \cite{CastejonThesis} are built up from former work on the Kähler case in \cite{BT}.

\begin{figure}[ht]
\centering
\setlength{\unitlength}{1em}
\begin{tikzpicture}
\begin{scope}[scale=0.8]
\fill[fill=magenta!30] (0,0.3) circle (0.1);
\draw[black!30] (0,0.3) circle (0.15);
\fill[fill=magenta!30] (-2.15,-0.6) circle (0.1);
\draw[black!30] (-2.15,-0.6) circle (0.15);
\fill[fill=magenta!30] (-1.2,-0.2) circle (0.1);
\draw[black!30] (-1.2,-0.2) circle (0.15);
\fill[fill=magenta!30] (1.2,-0.2) circle (0.1);
\draw[black!30] (1.2,-0.2) circle (0.15);
\fill[fill=magenta!30] (2.15,-0.6) circle (0.1);
\draw[black!30] (2.15,-0.6) circle (0.15);
\fill[fill=magenta!30] (-2.5,-1.3) circle (0.1);
\draw[black!30] (-2.5,-1.3) circle (0.15);
\fill[fill=magenta!30] (2.5,-1.3) circle (0.1);
\draw[black!30] (2.5,-1.3) circle (0.15);
\fill[fill=teal!30] (0,1.5) circle (0.1);
\draw[black!30] (0,1.5) circle (0.15);
\fill[fill=teal!30] (-2.25,1) circle (0.1);
\draw[black!30] (-2.25,1) circle (0.15);
\fill[fill=teal!30] (-1.25,1.3) circle (0.1);
\draw[black!30] (-1.25,1.3) circle (0.15);
\fill[fill=teal!30] (1.25,1.3) circle (0.1);
\draw[black!30] (1.25,1.3) circle (0.15);
\fill[fill=teal!30] (2.25,1) circle (0.1);
\draw[black!30] (2.25,1) circle (0.15);
\fill[fill=black!30] (-1.05,-2.5) circle (0.1);
\fill[fill=black!30] (1.05,-2.5) circle (0.1);
\fill[fill=black!30] (0,-2.85) circle (0.1);
\fill[fill=black!30] (-1.05,2.5) circle (0.1);
\fill[fill=black!30] (1.05,2.5) circle (0.1);
\fill[fill=black] (0.5,1.5) circle (0.1);
\fill[fill=black] (-0.5,1.5) circle (0.1);
\fill[fill=black] (0,2.75) circle (0.1);
\fill[fill=black] (1.75,2) circle (0.1);
\fill[fill=black] (-1.75,2) circle (0.1);
\fill[fill=black] (0.5,-2) circle (0.1);
\fill[fill=black] (-0.5,-2) circle (0.1);
\fill[fill=black] (1.85,-2.2) circle (0.1);
\fill[fill=black] (-1.85,-2.2) circle (0.1);
\fill[fill=magenta] (0,-1.15) circle (0.1);
\draw (0,-1.15) circle (0.15);
\fill[fill=magenta] (-2.1,-1.2) circle (0.1);
\draw (-2.1,-1.2) circle (0.15);
\fill[fill=magenta] (-1.1,-1.15) circle (0.1);
\draw (-1.1,-1.15) circle (0.15);
\fill[fill=magenta] (1.1,-1.15) circle (0.1);
\draw (1.1,-1.15) circle (0.15);
\fill[fill=magenta] (2.1,-1.2) circle (0.1);
\draw (2.1,-1.2) circle (0.15);
\fill[fill=teal] (0,0) circle (0.1);
\draw (0,0) circle (0.15);
\fill[fill=teal] (-2.25,0.35) circle (0.1);
\draw (-2.25,0.35) circle (0.15);
\fill[fill=teal] (-1.25,0.15) circle (0.1);
\draw (-1.25,0.15) circle (0.15);
\fill[fill=teal] (1.25,0.15) circle (0.1);
\draw (1.25,0.15) circle (0.15);
\fill[fill=teal] (2.25,0.35) circle (0.1);
\draw (2.25,0.35) circle (0.15);
\draw (-2.9,0.6) arc (-120:90:0.15);
\fill[fill=teal] (-2.89,0.65) arc (-120:90:0.1);
\draw (2.9,0.85) arc (90:290:0.15);
\fill[fill=teal] (2.9,0.8) arc (90:290:0.1);
\draw (0,0) circle (3);
\draw (-3,0) arc (180:360:3 and 0.6);
\draw[black!30,dashed] (3,0) arc (0:180:3 and 1);
\fill[fill=black] (0,-0.6) circle (0.1);
\fill[fill=black] (-1.3,-0.55) circle (0.1);
\fill[fill=black] (-2.3,-0.4) circle (0.1);
\fill[fill=black] (1.3,-0.55) circle (0.1);
\fill[fill=black] (2.3,-0.4) circle (0.1);
\fill[fill=black] (-3,-0.2) arc (-90:90:0.1);
\fill[fill=black] (3,-0.005) arc (90:270:0.1);
\fill[fill=black!30] (0,1) circle (0.1);
\fill[fill=black!30] (-1.3,0.9) circle (0.1);
\fill[fill=black!30] (-2.3,0.65) circle (0.1);
\fill[fill=black!30] (1.3,0.9) circle (0.1);
\fill[fill=black!30] (2.3,0.65) circle (0.1);
\end{scope}
\end{tikzpicture}
\caption{K3 surface as a singular fiber bundle over the sphere. The preimage of the equator of the sphere contains 12 regular \BS fibers, obtained from the 12 elliptic singular \BS fibers which become regular after the symplectic sum.}
\label{fig:K3}
\end{figure}
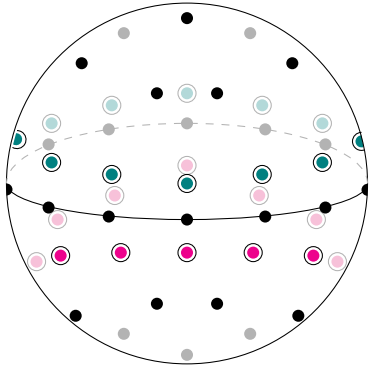

With this new model (see Figure \ref{fig:replacement}) we replaced the sheaf cohomology computation by the computation of the Bohr-Sommerfeld leaves, killing the infinite contributions of the focus-focus singularities and obtaining a quantization that coincides with the Kähler quantization of the nodal trade.

Our conclusions about glued blocks or molecules are coherent with the results of Bolsinov and Izosimov in \cite{BolsinovIzosimov}, where they build a particular integrable system with non-degenerate singularities. This system is homeomorphic to the direct product of a rank $0$ focus-focus singularity and a trivial fibration, but is not diffeomorphic to it. It is built as a family $\cF_t$ of double-pinched symplectic focus-focus singularities on $(M^4,\omega)$ that depends on a parameter $t \in (a,b) \subset \bbR$. This family generates a Lagrangian fibration on $M^6:= M^4 \times (a,b) \times S^1$, which is endowed with the symplectic structure $\omega + d t \wedge d \phi$. This fibration has a focus-focus singularity of rank $1$ with two critical circles on each fiber and, by construction, is homeomorphic to the direct product of a rank $0$ double-pinched focus-focus singularity and a regular foliation of an annulus by concentric circles.

Bolsinov and Izosimov show that there exists an invariant which should be a constant function for direct-product-type singularities and also constant for almost direct products. And, on the other hand, they prove that this invariant, in the case of the $M^6$ system, is a non-trivial function. Therefore, they prove that the singularity is not diffeomorphic to any almost direct product of the elementary non-degenerate singularities.

One interesting way of testing the applications of our quantization model is trying to answer the \emph{Can we hear the shape of a drum?} question raised by Mark Kac in \cite{MKac}. If the manifold is toric, we can effectively reconstruct it from the Bohr-Sommerfeld leaves, i.e., from the quantization as the Delzant polytope can be recovered from its integral interior points and the Delzant polytope completely determines the toric system \cite{delzant}. But if we do not know beforehand that the manifold does not have singularities of focus-focus type, we can not give the complete reconstruction because we would lack the information on whether the singular leaves are of toric type or of focus-focus type.

In our model for the focus-focus singularity, the singular focus-focus fiber does not add any contribution to the quantization when compared with respect to the elliptic model \cite{mirandapresassolha}. We wonder if it has a physical interpretation. Except for the singular one, in both models the fibers in a global neighbourhood of the singular one are topologically equivalent. This and the analysis of the focus-focus singularity which appears in the physical model of the spherical pendulum can explain it. At the top equilibrium of the pendulum, as it is pointed out by Cushman and \'Sniatycki in \cite{cushmansniatycki}, there is a singularity of focus-focus type. There, the conditions of the action functions of the integrable system force Planck’s constant to satisfy $2\pi\hbar = 8/n$, where $n$ is the first component of the lattice in the image of the moment map and takes some integer values. This is a really strong condition on Planck’s constant which is not likely to be satisfied in physics, since Planck's constant cannot equal a set of different values. Although for some computations Planck's constant is normalized in the sense that $2\pi\hbar = 1$ (see \cite{mirandahamilton} or \cite{Hamilton}), normalization would still not avoid this physical impossibility. In consequence, the Bohr-Sommerfeld conditions are not satisfied in the focus-focus point and it makes sense that it does not contribute to the quantization differently than a regular leaf.

\appendix
\section{The sheaf cohomology of the cylinder}
\label{sec:cohomcyl}

We reproduce here the calculations of the sheaf cohomology of the cylinder. In Section \ref{sec:newmodel} we change a piece of the cohomology and with this appendix we want to recall the whole picture of the construction of the covering and the cohomology groups of flat sections.

We start by calculating the cohomology on a finite subset of the cylinder $M=\bbR\times S^1$. Let $U$ be a \textit{band} of the cylinder, i.e., a subset of $M$ of the form $I\times S^1$, with $I \subset \bbR$ open and bounded. Assume that $U$ contains at most one Bohr-Sommerfeld leaf and construct a cover $\mathcal{A}$ of $U$ by three rectangles $A,B,C$ that slightly overlap as in Figure \ref{fig:cylindercover}.

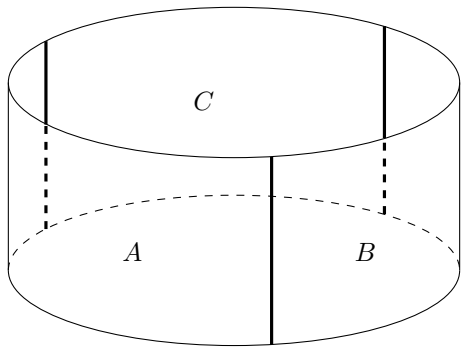
\begin{figure}[ht!]
\begin{center}\begin{tikzpicture}
\draw (0,0) ellipse (3 and 1);
\draw (-3,0) -- (-3,-2.5);
\draw (-3,-2.5) arc (180:360:3 and 1);
\draw [dashed] (-3,-2.5) arc (180:360:3 and -1);
\draw (3,-2.5) -- (3,0);
\draw [very thick] (0.5,-3.49) -- (0.5,-0.99);
\draw [dashed,very thick] (2,-1.75) -- (2,-0.74);
\draw [very thick] (2,-0.74) -- (2,0.75);
\draw [dashed,very thick] (-2.5,-1.95) -- (-2.5,-0.56);
\draw [very thick] (-2.5,-0.56) -- (-2.5,0.55);
\node at (-1.35,-2.25) {$A$};
\node at (1.75,-2.25) {$B$};
\node at (-0.4,-0.25) {$C$};
\end{tikzpicture}\end{center}
\caption{A band $U$ of the cylinder $M=\bbR\times S^1$ covered by three rectangles $A,B,C$. The rectangles overlap slightly in the thick joints.}
\label{fig:cylindercover}
\end{figure}

We want to identify the \v Cech $0$-cochains and $1$-cochains with respect to the cover $\cA$. A $0$-cochain $\alpha$ is an assignment of a flat section over $A$, $B$ and $C$ to that same subsets. Then, $\alpha$ assigns $A$ to the section $a_A(t)e^{it\theta}$, $B$ to $a_B(t)e^{it\theta}$ and $C$ to $a_C(t)e^{it\theta}$.

The angular coordinate $\theta$ can not be defined on all of $S^1$ so a branch of $\theta$ has to be fixed on each rectangle. We choose the branches so that $\theta_A = \theta_B$ on $A\cap B$, 
$\theta_B = \theta_C$ on $B\cap C$, and 
$\theta_C = \theta_A + 2\pi$ on $A\cap C$. The coboundary operator $\delta$ acts on $\alpha$ as
$$(\delta \alpha)_{ij} = \eta_j - \eta_i 
= a_j(t)\, e^{it\theta_j} - a_i(t)\, e^{it\theta_i},$$
for $i,j\in\{A,B,C\}$. We impose that, at the three intersections, $\delta \alpha$ is $0$, obtaining the following three equations:

\begin{align}
0 &= a_B(t)\, e^{it\theta_B} - a_A(t)\, e^{it\theta_A}
	\qquad\text{on } A\cap B\\
0 &= a_C(t)\, e^{it\theta_C} - a_B(t)\, e^{it\theta_B}
	\qquad\text{on } B\cap C\\
0 &= a_A(t)\, e^{it\theta_A} - a_C(t)\, e^{it\theta_C}
	\qquad\text{on } C\cap A
\end{align}

Then, $\alpha$ is a cocycle if and only if the following three equations simultaneously:
\begin{align}
a_B(t) &= a_A(t) \qquad\qquad\text{on } A\cap B\\
a_C(t) &= a_B(t) \qquad\qquad\text{on } B\cap C\\
a_A(t) &= a_C(t)\, e^{2\pi it} \qquad\text{on } C\cap A
\end{align}

which is not possible since $e^{2\pi it}$ can not equal $1$ in an entire interval of values of $t$. We conclude that are no $0$-cocycles and that $H^0 = 0$.

Now, a $1$-cochain $\beta$ is an assignment of a flat section over $A\cap B$, $B\cap C$ and $C\cap A$ to that same subsets. Then, $\beta$ assigns $A\cap B$ to the section $b_{AB}(t)e^{it\theta}$, $B\cap C$ to $b_{BC}(t)e^{it\theta}$ and $C\cap A$ to $b_{CA}(t)e^{it\theta}$. There only possible triple intersection in the cover $\cA$ is empty and $2$-cochains do not exist, implying that every $1$-cochain is a cocycle. Since the $1$-cochain is determined essentially by the three smooth functions $b_{ij}(t)$ on $I$, the space of $1$-cocycles is isomorphic to $C^\infty(I)^3$.

A $1$-cochain $\beta$ is a coboundary if there exists a $0$-cochain $\alpha = \{a_A(t) e^{it\theta_A}, a_B(t) e^{it\theta_B}, a_C(t) e^{it\theta_C}\}$ with $\delta \alpha = \beta$. Or, equivalently, if these three equations are satisfied:

\begin{align}
\beta_{AB} &= \alpha_B - \alpha_A \quad \text{on } A\cap B\\
\beta_{BC} &= \alpha_C - \alpha_B \quad \text{on } B\cap C\\
\beta_{CA} &= \alpha_A - \alpha_C \quad \text{on } C\cap A
\end{align}

Giving the sections explicitly, these equations transform to:

\begin{align}
b_{AB}(t)e^{it\theta_A} &= a_B(t) e^{it\theta_B} - a_A(t) e^{it\theta_A}
\qquad \text{on }A\cap B\\
b_{BC}(t)e^{it\theta_B} &= a_C(t) e^{it\theta_C} - a_B(t) e^{it\theta_B}
\qquad \text{on }B\cap C\\
b_{CA}(t)e^{it\theta_C} &= a_A(t) e^{it\theta_A} - 	a_C(t) e^{it\theta_C}
\qquad \text{on }C\cap A\label{eq:3rdeq}
\end{align}

Notice that, on each ordered intersection of two sets $E\cap F$, we use the $\theta$ coordinate from $E$. In each equation all the $\theta$ coordinates coincide except in Equation \ref{eq:3rdeq}, where they 
differ by a factor of $2\pi$. Then, we obtain a system of equations in the three unknown functions $a_A$, $a_B$, and $a_C$ which has to be true for each value of $t$ in $I$ and which has, as a matrix of coefficients, the following:
\begin{equation}
\begin{pmatrix}
-1&  1&  0\\
0&  -1&  1\\
e^{-2\pi it}&  0& -1
\end{pmatrix} 
\end{equation}
We observe that the matrix has rank $3$ (and therefore the system has a unique solution) when $e^{-2\pi it} \neq 1$. Then if $e^{-2\pi it}$ is never $1$ on $U$, every cocycle is a coboundary, and $U$ has the zero cohomology. Otherwise, if $e^{-2\pi it}=1$ somewhere in $I$,
which happens if and only if $I$ contains an integer $m$, the system only has a solution if the matrix \begin{equation}
\begin{pmatrix}
-1&  1&  0&  b_{AB}(m)\\
0&  -1&  1&  b_{BC}(m)\\
e^{-2\pi it}&  0& -1&  b_{CA}(m)
\end{pmatrix} 
\end{equation}
has rank $2$, i.e., if $\beta$ satisfies the condition
\begin{equation}
b_{AB}(m) + b_{BC}(m) + b_{CA}(m) = 0.
\end{equation}

Then, we have the following result:

\begin{prop}
the cohomology group $H^1$ of $U$ is precisely
\begin{equation}
H^1 = C^\infty (I)^3 / \{b_{AB}(m) + b_{BC}(m) + b_{CA}(m) = 0\},
\end{equation}
which is isomorphic to $\bbC$.
\end{prop}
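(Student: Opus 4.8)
The plan is to compute $H^1 = Z^1/B^1$ directly. As already noted, there is no nonempty triple intersection in $\cA$, so $C^2_\cA = 0$ and every $1$-cochain is a cocycle; thus $Z^1 = C^1_\cA \cong C^\infty(I)^3$ via $\beta \leftrightarrow (b_{AB},b_{BC},b_{CA})$. It therefore remains to pin down the subspace of coboundaries $B^1 = \delta(C^0_\cA) \subset C^\infty(I)^3$ and to compute the quotient. I will carry this out assuming $I$ is an interval containing the single integer $m$ (the case with no integer being handled by the same reduction and giving $H^1=0$).

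First I would determine $B^1$ exactly. Writing $\beta = \delta\alpha$ out as the three equations above and using the chosen branches of $\theta$, the system collapses to the purely one-variable relations $b_{AB}(t) = a_B(t) - a_A(t)$, $b_{BC}(t) = a_C(t) - a_B(t)$, $b_{CA}(t) = a_A(t)e^{-2\pi i t} - a_C(t)$. Adding the three gives $b_{AB}(t)+b_{BC}(t)+b_{CA}(t) = a_A(t)\bigl(e^{-2\pi i t}-1\bigr)$, so evaluating at $t=m$ shows that the compatibility condition $b_{AB}(m)+b_{BC}(m)+b_{CA}(m)=0$ is \emph{necessary} for $\beta$ to be a coboundary. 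Conversely, eliminating $a_B = a_A + b_{AB}$ and $a_C = a_A + b_{AB} + b_{BC}$ reduces the existence of a smooth $\alpha$ with $\delta\alpha = \beta$ to finding a single smooth function $a_A$ solving the scalar equation $a_A(t)\bigl(e^{-2\pi i t}-1\bigr) = g(t)$, where $g := b_{AB}+b_{BC}+b_{CA}$; once $a_A$ is found, $a_B$ and $a_C$ are obtained by back-substitution and are automatically smooth.

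The crux — and the step I expect to be the main obstacle — is producing a \emph{smooth} solution $a_A$ of $a_A(t)\bigl(e^{-2\pi i t}-1\bigr) = g(t)$ when $g(m)=0$, since the coefficient $e^{-2\pi i t}-1$ degenerates precisely at $t=m$. Away from $m$ one just sets $a_A(t) = g(t)/(e^{-2\pi i t}-1)$, which is smooth. Near $m$, the point is that $e^{-2\pi i t}-1$ vanishes to \emph{exactly} first order (its derivative at $m$ is $-2\pi i \neq 0$), so by Hadamard's lemma $e^{-2\pi i t}-1 = (t-m)e(t)$ with $e$ smooth and nonvanishing near $m$; likewise $g(m)=0$ gives $g(t) = (t-m)\tilde g(t)$ with $\tilde g$ smooth. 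Hence $g(t)/(e^{-2\pi i t}-1) = \tilde g(t)/e(t)$ extends smoothly across $t=m$, and the two local formulas agree on the overlap, defining a global smooth $a_A$ on $I$. This makes the compatibility condition \emph{sufficient}, so $B^1 = \{\beta \in C^\infty(I)^3 : b_{AB}(m)+b_{BC}(m)+b_{CA}(m)=0\}$.

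Finally, $H^1 = C^\infty(I)^3/B^1$. The linear functional $C^\infty(I)^3 \to \bbC$, $\beta \mapsto b_{AB}(m)+b_{BC}(m)+b_{CA}(m)$, is surjective (take $b_{AB}$ a nonzero constant and $b_{BC}=b_{CA}=0$) and has kernel exactly $B^1$ by the previous paragraph, so the first isomorphism theorem gives $H^1 \cong \bbC$, as claimed.
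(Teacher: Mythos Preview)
Your proof is correct and follows essentially the same route as the paper: identify $Z^1 \cong C^\infty(I)^3$, reduce the coboundary equation $\delta\alpha=\beta$ to a linear system in $a_A,a_B,a_C$, and extract the single compatibility condition $b_{AB}(m)+b_{BC}(m)+b_{CA}(m)=0$ at the integer $t=m$. Your treatment is in fact more careful than the paper's, which argues the rank of the coefficient matrix pointwise but never explicitly justifies why the resulting solution $a_A(t)=g(t)/(e^{-2\pi it}-1)$ extends \emph{smoothly} across $t=m$; your use of Hadamard's lemma closes exactly that gap, and your identification $H^1\cong\bbC$ via the surjective evaluation functional and the first isomorphism theorem is cleaner than the paper's bare assertion.
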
 

Observe that for $k>1$ there are no $(k+1)$-fold intersections in the cover $\cA$. Therefore, all the cohomology groups $H^k_{\cA}$ are zero for $k>1$.

The condition $e^{2\pi it}=1$ is satisfied exactly at the Bohr-Sommerfeld leaves, so we conclude that if $U$ is a band on the cylinder, the sheaf cohomology of $U$ with respect to the cover $\cA$ by the three rectangles is trivial if $U$ does not contain a \BS leaf, and it is: 
\begin{equation*}
H^k_{\cA} (U;\cJ) \cong 
\begin{cases}
  \bbC &k = 1\\
  0   &k\neq 1
\end{cases}
\end{equation*}

One can see that if another cover $\mathcal{B}$ of $U$ is made by $k$ rectangles instead of $3$, the cohomology calculated with respect to $\mathcal{B}$ is the same as that calculated with respect to $\cA$.

\bibliographystyle{alpha}
\bibliography{GeomQuantCotangentLift}

\end{document}